\newtheorem{theorem}{Theorem}
\newtheorem{lemma}[theorem]{Lemma}
\newtheorem{corollary}[theorem]{Corollary}
\newenvironment{definition}[1][Definition]{\begin{trivlist}
\item[\hskip \labelsep {\bfseries #1}]}{\end{trivlist}}
\DeclareMathOperator*{\argmax}{arg\,max}
\title{Statistical Inference for Random Unknowns 
\\ via Modifications of Extended Likelihood}
\date{}
\author{ 
\href{https://orcid.org/0000-0002-3447-4306}
{\includegraphics[scale=0.06]{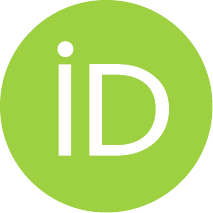}\hspace{1mm}
Hangbin~Lee}\\
Department of Information and Statistics\\
Chungnam National University\\
Daejeon, 34134 \\
\texttt{hangbin221@gmail.com} \\
\And
\href{https://orcid.org/0000-0001-9820-6434}
{\includegraphics[scale=0.06]{orcid.pdf}\hspace{1mm}
Youngjo~Lee} \\
Department of Statistics\\
Seoul National University\\
Seoul, 08826 \\
\texttt{youngjo@snu.ac.kr} \\
}
\begin{document}
\maketitle

\begin{abstract}
Fisher's likelihood is widely used for statistical inference for fixed unknowns. This paper aims to extend two important likelihood-based methods, namely the maximum likelihood procedure for point estimation and the confidence procedure for interval estimation, to embrace a broader class of statistical models with additional random unknowns. We propose the new h-likelihood and the h-confidence by modifying extended likelihoods. Maximization of the h-likelihood yields both maximum likelihood estimators of fixed unknowns and asymptotically optimal predictors for random unknowns, achieving the generalized Cram{\'e}r-Rao lower bound. The h-likelihood further offers advantages in scalability for large datasets and complex models. The h-confidence could yield a valid interval estimation and prediction by maintaining the coverage probability for both fixed and random unknowns in small samples. We study approximate methods for the h-likelihood and h-confidence, which can be applied to a general class of models with additional random unknowns.
\end{abstract}

\keywords{extended likelihood, h-likelihood, 
confidence distribution, predictive distribution,
h-confidence}

\section{Introduction}
\label{sec:introduction}
Prediction of random unknowns is important in subject-specific inferences
\citep{zeger88} and predictions of future events \citep{hinkley79, lee09}.
However, the likelihood for additional random unknowns has not been
well-established yet. \citet{fisher22} introduced the likelihood
for statistical models with fixed unknowns, which has remained 
the cornerstone of statistical modeling and inference. The maximum likelihood estimators
(MLEs) are asymptotically the best among the consistent estimators,
achieving the Cram{\'{e}}r-Rao lower bound (CRLB). 
The information matrix and associated delta-method 
provide a necessary variance estimator for any function of parameters of interest.
Furthermore, the likelihood principle (LP) indicates
that the likelihood contains all the evidence regarding fixed unknowns
in the data \citep{birnbaum62}. 
As the most significant contribution in the 20th century, \citet{efron98} noted:
\begin{quote}
Fisher's great accomplishment was to provide an optimality standard for
statistical estimation--a yardstick of the best it's possible to do in any
given estimation problem. Moreover, he provided a practical method, maximum
likelihood, that quite reliably produces estimators coming close to the
ideal optimum even in small samples.
\end{quote}
The widespread popularity with extensive usage of the ML procedure
necessitates its extension to a more general class of statistical models for a
practical analysis of large-scaled data. 
Although the LP does not tell us how to proceed,
likelihood procedures should respect the LP to exploit the full data evidence.

The introduction of unobserved random variables (unobservables) allows
flexible modeling at the individual level. In statistical literature,
unobservables naturally appear in various names: random effects, latent  
variables, hidden states, auxiliary variables, factors, unobserved future
observations, potential outcomes, missing data, etc.
A broad class of models with unobservables have been proposed, such as  
generalized linear mixed models (GLMMs; \citeauthor{breslow93}, \citeyear{breslow93}), 
hierarchical generalized linear models (HGLMs; \citeauthor{lee96}, \citeyear{lee96}),
generalized linear latent and mixed models \citep{rabe04, skrondal04, skrondal07}
frailty models for survival data \citep{therneau00}, 
structural equation modeling \citep{joreskog70, joreskog74},
Gaussian process models \citep{williams06,hamelijnck21,zhang23},
item response models \citep{baker04},
models for missing data \citep{little19}, prediction \citep{bjornstad90}, 
and potential outcomes in causality \citep{rubin06}, etc. 
Recently, deep neural networks with unobservables 
are proposed to enhance the prediction accuracy 
\citep{tran20, mandel23, simchoni21, simchoni23, lee23}.
Unobservables can appear in various components of statistical models. 
Random effects in the mean component can account for within-subject correlation in
longitudinal studies \citep{diggle02}, 
smooth spatial data \citep{besag99},
spline-type function fitting \citep{eilers96}, 
and factor analysis \citep{bartholomew87}, etc., 
while random effects in the variance component can account for 
heteroscedasticity \citep{lee17} 
and heavy-tailed distributions for robust modeling \citep{noh07}. 
Modeling of unobservables is the key to these models,  
so there exists a keen interest 
in developing a suitable extension of the likelihood procedures
to general classes of statistical models with additional random unknowns. 
Despite numerous attempts, achieving such a goal has remained unresolved.

For the prediction of unobservables, 
\citet{berger88} formulated an extended likelihood.
However, \citet{bayarri88} demonstrated the difficulties in extending ML procedures 
by showing that maximization of the extended likelihoods 
cannot yield sensible estimators for both fixed and random parameters. 
\citet{lee96} proposed the use of h-likelihood,
which is defined as an extended likelihood
on a particular scale of random parameters.
However, \citet{firth06} noted the ambiguity in the original h-likelihood
and \citet{meng09} discussed the challenges of the extended likelihood approach
by demonstrating the differences between 
estimating fixed unknowns and predicting random unknowns;
see \citet{meng11} and accompanying discussions for more details.
This paper establishes a new h-likelihood and h-confidence 
for prediction of random unknowns by modifying the extended likelihood.
The new h-likelihood and the h-confidence extend
the ML procedure for point estimation of fixed unknowns
and the confidence distribution (CD) for interval estimation of fixed unknowns
to accommodate additional random unknowns.

In Section \ref{sec:review}, 
we underline inferential difficulties of existing likelihood approaches
and provide a rationale for introducing 
the new h-likelihood as a modified extended likelihood.
In Section \ref{sec:hlik}, 
we propose the new h-likelihood at the Bartlizable scale with a modification term
to provide both MLEs for fixed unknowns 
and asymptotic best unbiased predictors (BUPs) for random unknowns.
LP and ELP imply that the proposed h-likelihood
exploits all the evidence about fixed and random parameters in the data.
A numerical study demonstrates the advantage
of the proposed h-likelihood approach in scalability
over the marginal likelihood approach.
In Section \ref{sec:asymptotic}, 
we derive the generalized CRLB (GCRLB)
to accommodate additional random parameters.
Then, we show that the new h-likelihood gives asymptotically optimal 
maximum h-likelihood estimators (MHLEs)
by achieving the GCRLB, along with their consistent variance estimators 
based on the Hessian matrix.
In Section \ref{sec:small}, we investigate scenarios
where consistency may not hold for either fixed or random unknowns. 
Using two illustrative examples, we show that the MHLEs could still
retain asymptotic optimality by achieving the GCRLB.
However, in small samples,
asymptotic optimalities of point estimation and prediction are generally not feasible
and the interval estimators and predictors based on asymptotic normality may not be applicable.
In Section \ref{sec:finite}, 
we propose the h-confidence, which is another modified extended likelihood.
The h-confidence extends the confidence theory for fixed unknowns
to accommodate additional random unknowns, in a frequentist perspective.
We illustrate that the h-confidence can provide 
the predictive distribution (PD) for interval predictions of random unknowns,
which can exactly maintain the coverage probability, even with a single observation.
In Section \ref{sec:approximate}, we address approximate methods
to the h-confidence and the h-likelihood
when they do not have explicit expressions.
Consequently, the proposed methods can be applied 
to a broad class of models with additional random unknowns,
followed by concluding remarks in Section \ref{sec:conclusion}.
All the proofs and technical details are in the Appendix.

\section{Backgrounds}
\label{sec:review}

\subsection{Marginal likelihood approach}
We start with a statistical model
$f_{\boldsymbol{\theta}}(\mathbf{y},\mathbf{u})$
that includes both fixed unknowns 
$\boldsymbol{\theta}=(\theta_{1},\cdots,\theta_{p})^{\intercal} \in \boldsymbol{\Theta}$
and random unknowns
$\mathbf{u}=(u_{1},...,u_{n})^{\intercal} \in \Omega_{\mathbf{u}}$,
where $\mathbf{y}=(y_{1},...,y_{N})^{\intercal} \in \Omega_{\mathbf{y}}$
denotes the observed data.
The MLEs of fixed unknowns $\boldsymbol{\theta}$ can be obtained by
maximizing the marginal likelihood,
\begin{equation} \label{eq:marginal_lik}
L(\boldsymbol{\theta};\mathbf{y})
\equiv f_{\boldsymbol{\theta}}(\mathbf{y})
=\int_{\Omega_{\mathbf{u}}} f_{\boldsymbol{\theta}}(\mathbf{y},\mathbf{u}) \ d\mathbf{u},
\end{equation}
but the marginal likelihood often involves intractable integration.
To obtain the marginal likelihood and the corresponding MLEs, 
various approximate methods have been proposed.
Well-known methods include Gauss-Hermite quadrature and Monte Carlo EM algorithm. 
However, they become computationally demanding
as the dimension of integration $d$ increases \citep{Hedeker06}.
The Laplace approximation (LA) offers an alternative 
and has been widely utilized through various R packages \citep{rue09,Kristensen16,dhglm}. 
However, it may suffer from severe bias, particularly with binary data. 
Additionally, the first-order LA is applicable only when $d=o(N^{1/3})$ \citep{shun95,ogden21},
Numerical studies by \citet{noh07reml} demonstrated that 
the second-order LA can reduce the bias
in the analysis of binary data,
but it is computationally demanding for correlated random effects with $d=O(N)$.
\citet{jin24} highlighted the difficulty in estimating variance.
The EM algorithm \citep{dempster77} gives the MLEs of fixed parameters
without explicitly implementing the intractable integration.
However, the marginal likelihood and EM algorithm 
do not provide inference for random unknowns.
For prediction of random unknowns,
they require an additional procedure,
such as empirical Bayesian or full Bayesian approach,
to compute the posterior mean (BUP) of $\mathbf{u}$,
$$
\textrm{E}(\mathbf{u}|\mathbf{y}) = \int_{\Omega_{\mathbf{u}}} \mathbf{u} \cdot f_{\boldsymbol{\theta}}(\mathbf{u}|\mathbf{y}) \ d\mathbf{u}.
$$
These approaches also suffer from a scalability issue 
in large-scale data with complex correlation structures.
When random unknowns are correlated,
the marginal log-likelihood of the full data $\ell(\boldsymbol{\theta};\mathbf{y})=\log L(\boldsymbol{\theta};\mathbf{y})$ 
is not equivalent to the sum of the marginal likelihood of each observation $\ell(\boldsymbol{\theta};y_{i})$ in general.
This causes a difficulty in the application of stochastic optimization methods 
to the marginal likelihood for neural networks with correlated random effects
\citep{simchoni23}.
Thus, with the growing complexity of models and data in modern applications, 
new methodologies are required to effectively address these challenges.
A numerical study in Section \ref{sec:stochastic} 
demonstrates that this challenge can be overcome 
by the proposed h-likelihood.

\subsection{Extended likelihood approaches}
\label{sec:extended}

For prediction of random unknowns $\mathbf{u}$, 
\citet{berger88} proposed the extended likelihood, 
based on the joint density $f_{\boldsymbol{\theta}}(\mathbf{y},\mathbf{u})$, 
\begin{equation} \label{eq:extended}
L_{e}(\boldsymbol{\theta},\mathbf{u};\mathbf{y})
\equiv L(\boldsymbol{\theta};\mathbf{y}) \ 
L_{p}(\mathbf{u};\boldsymbol{\theta},\mathbf{y}),  
\end{equation}
where $L_{p}(\mathbf{u};\boldsymbol{\theta},\mathbf{y})
\equiv f_{\boldsymbol{\theta}}(\mathbf{u}|\mathbf{y})$ represents
the predictive likelihood \citep{hinkley79, lee17}.
The joint density characterizes the data generation process 
of a fully specified hierarchical model as follows: 
\begin{equation*}
f_{\boldsymbol{\theta}}(\mathbf{y},\mathbf{u})
=f_{\boldsymbol{\theta}}(\mathbf{y}|\mathbf{u})f_{\boldsymbol{\theta}}(\mathbf{u}).
\end{equation*}
This extended likelihood accords with suggestions made by various authors
\citep{henderson59,kaminsky85,butler86,bjornstad96}. 
\citet{bjornstad96} established the extended likelihood principle (ELP), 
stating that all the evidence about $(\boldsymbol{\theta},\mathbf{u})$ 
contained in the data $\mathbf{y}$ 
is captured in the extended likelihood \eqref{eq:extended}.
The use of joint density $f_{\boldsymbol{\theta}}(\mathbf{y},\mathbf{u})$ 
as an extended likelihood $L_{e}(\boldsymbol{\theta},\mathbf{u};\mathbf{y})$ 
requires care due to the Jacobian term associated with random parameters. 
For discrete random parameters, \citet{lee13} showed that 
all the extended likelihoods are h-likelihoods \citep{lee17}, 
whose MHLE gives the optimal prediction of discrete latent status,
because discrete random parameters do not involve the Jacobian term. 
Using the h-likelihood for discrete latent variables,
\citet{lee13} proposed large-scale multiple testing 
and \citet{chee21} proposed non-parametric MLEs for finite mixture frailty models. 
Thus, in this paper, we only consider continuous random parameters.

Estimators from the joint maximization of an extended likelihood 
are sensitive to the parameterization (scale) of random parameters. 
\citet{lee96} noted the importance of the scale of random parameters 
in the extended likelihood for their prediction.
They introduced HGLMs, having the linear predictor 
\begin{equation*}
\boldsymbol{\eta}=\mathbf{X}\boldsymbol{\beta}+\mathbf{Z}\mathbf{v}, 
\end{equation*}
and proposed the use of weak canonical scale $\mathbf{v}$, 
which is additive to fixed effects in the linear predictor 
to form their (original) h-likelihood, 
\begin{equation*}
L_{e}(\boldsymbol{\theta},\mathbf{v};\mathbf{y})
=L(\boldsymbol{\theta};\mathbf{y})
\ L_{p}(\mathbf{v};\boldsymbol{\theta},\mathbf{y})
=f_{\boldsymbol{\theta}}(\mathbf{y},\mathbf{v}).  
\end{equation*}
This covers the GLMM by assuming normal random effects $\mathbf{v}$.
In this formulation, the original h-likelihood 
$L_{e}(\boldsymbol{\theta},\mathbf{v};\mathbf{y})$ 
for statistical inference is the same with the joint density for data generation
$f_{\boldsymbol{\theta}}(\mathbf{y|v})f_{\boldsymbol{\theta}}(\mathbf{v})$ 
from the hierarchical model, 
as the classical likelihood $L(\boldsymbol{\theta};\mathbf{y})$ 
is the same with the marginal density $f_{\boldsymbol{\theta}}(\mathbf{y})$. 
A number of authors \citep{henderson59,gilmour85,harville84,schall91,breslow93,wolfinger93} 
have extended joint maximization algorithms using different justifications,
but they fail to give optimal estimators and predictors in general.

There has been a long-standing disagreement
regarding the general definition of likelihood 
with additional random unknowns \citep{bjornstad96}. 
Various extended likelihoods have been proposed,
either to satisfy the LP or to enable ML procedures
\citep{royall76, butler86, bjornstad90, lee96, pawitan01}.
\citet{bayarri88} showed that all contemporary extended likelihoods 
have difficulty in yielding sensible estimators through the joint maximization.

\subsection{Motivation of modified extended likelihoods}
\label{sec:motivation}

The complete-data likelihood in the EM algorithm is an extended likelihood,
and it includes a modification (E-step)
to produce score equations for the MLEs of fixed parameters (M-step).
The original h-likelihood is also an extended likelihood,
and the application of LA \citep{lee02} can be
interpreted as a modified extended likelihood.
Though it fails to provide optimal estimation and prediction in general, 
it still offers valuable insights on the use of extended likelihood.
To achieve desired statistical properties,
the proposed new h-likelihood and h-confidence are defined as
modified extended likelihoods of the form,
$$
m(\boldsymbol{\theta};\mathbf{y}) L_{e}(\boldsymbol{\theta},\mathbf{u};\mathbf{y}),
$$
where $m(\boldsymbol{\theta};\mathbf{y})$ is a data-dependent modification term.
For LMMs, \citet{lee23}
derived the so-called canonical scale $\mathbf{v}^{c}$
that allows the joint maximization of $L_{e}(\boldsymbol{\theta},\mathbf{v}^{c};\mathbf{y})$ 
to provide the MLE of all the fixed parameters $\boldsymbol{\theta}$.
Their study provides insight that it can be generalized by ensuring that
$L_{p}(\widetilde{\mathbf{v}}^{c};\boldsymbol{\theta},\mathbf{y})
= \max_{\mathbf{v}^c} L_{p}(\mathbf{v}^{c};\boldsymbol{\theta},\mathbf{y})$ 
is independent of $\boldsymbol{\theta}$.

\medskip
\noindent Example 1 (LMM)
Consider an LMM with a linear predictor,
\begin{equation*}
\eta_{ij}=\mu_{ij}
=\textrm{E}(y_{ij}|v_{i})
=\mathbf{x}_{ij}^\intercal\boldsymbol{\beta} +v_{i},
\end{equation*}
where $v_{i} \sim N(0, \lambda)$ and $y_{ij}|v_{i} \sim N(\mu_{ij},\sigma^{2})$
for $i=1,...,n$ and $j=1,...,m$.
For simplicity, 
suppose that the variance components $\sigma^{2}=\lambda=1$ are known.
Then, $\mathbf{v}$ is the weak canonical scale of \citet{lee96}
and their h-likelihood for $(\boldsymbol{\beta},\mathbf{v})$ 
becomes the \citeauthor{henderson59}'s \citeyearpar{henderson59}
joint density of $(\mathbf{y},\mathbf{v})$, 
\begin{equation*}
\ell_{e}(\boldsymbol{\beta},\mathbf{v})
=\log f_{\boldsymbol{\beta}}(\mathbf{y},\mathbf{v})
=-\frac{1}{2}\sum_{i,j}(y_{ij}-\mathbf{x}_{ij}^\intercal\boldsymbol{\beta}-v_{i})^{2}
-\frac{1}{2}\sum_{i}v_{i}^{2}
-\frac{1}{2}\{n(m+1)\log 2\pi\}.
\end{equation*}
Here, joint maximization of $\ell_{e}(\boldsymbol{\beta},\mathbf{v})$
leads to the MLE $\widehat{\boldsymbol{\beta}}$.
For example, in a one-way LMM with 
$\mathbf{x}_{ij}^{\intercal}\boldsymbol{\beta} = \mu_{0}$,
the MLE is $\widehat{\mu}_{0}=\bar{y}=\sum_{i,j}y_{ij}/N$,
where $N=mn$ is the sample size.
However, if the random parameters $\mathbf{v}$ is reparameterized
in terms of log-normal $u_{i}=\exp (v_{i})$, 
the extended likelihood for $(\boldsymbol{\beta},\mathbf{u})$
based on the joint density of $(\mathbf{y},\mathbf{u})$ becomes
\begin{equation*}
\ell_{e}(\boldsymbol{\beta},\mathbf{u})
=\log f_{\boldsymbol{\beta}}(\mathbf{y},\mathbf{u})
=\log f_{\boldsymbol{\beta}}(\mathbf{y},\mathbf{v}) 
+ \sum_{i} \log \left| \frac{dv_{i}}{du_{i}} \right|
=\ell_{e}(\boldsymbol{\beta},\mathbf{v})-\sum_{i}\log u_{i},
\end{equation*}
where $\sum_i \log u_{i}$ is log-determinant of the Jacobian matrix
with respect to the transformation of $\mathbf{v}$.
The two models in terms of $\mathbf{v}$ and $\mathbf{u}$ are equivalent
but maximizing these extended likelihoods leads to different estimators; 
in one-way random effects model,
the joint maximization of $\ell_{e}(\boldsymbol{\beta},\mathbf{u})$ 
leads to $\widehat{\mu}_{0}=\bar{y}+1$.
Therefore, in the absence of a general principle to address this issue, 
the use of joint density as an h-likelihood would be controversial.
When $\lambda$ and $\sigma^2$ are unknown,
joint maximization of $\ell_{e}(\boldsymbol{\theta}, \mathbf{v})$
cannot yield the MLE of variance components $\lambda$ and $\sigma^2$,
where $\boldsymbol{\theta}$ denotes all the fixed parameters.
Recently, \citet{lee23} found that the canonical scale 
$\mathbf{v}^{c}=(v_{1}^{c},...,v_{n}^{c})^{\intercal}$ is
\begin{equation*}
v_{i}^{c}=\sqrt{\frac{\sigma^{2}+m\lambda}{\lambda\sigma^{2}}}\cdot v_{i}.
\end{equation*}
Since
$L_{p}(\widetilde{\mathbf{v}}^{c};\boldsymbol{\theta},\mathbf{y})
=\max f_{\boldsymbol{\theta}}(\mathbf{v}^{c}|\mathbf{y})
=(2\pi)^{-n/2}$
is independent of $\boldsymbol{\theta}$,
this scale leads to the MLEs of all the parameters in $\boldsymbol{\theta}$, 
including variance components $\sigma^{2}$ and $\lambda$.
They extended LMMs 
to deep neural networks and utilized 
$L_{e}(\boldsymbol{\theta},\mathbf{v}^{c};\mathbf{y})$ 
as a single objective function to develop efficient learning algorithms.
\hfill \qed

\section{New h-likelihood}
\label{sec:hlik}

This section proposes a reformulation of the h-likelihood for general cases.
For a moment, let $\ell_{e}(\boldsymbol{\theta},\mathbf{v};\mathbf{y})
=\log L_{e}(\boldsymbol{\theta},\mathbf{v};\mathbf{y})$ 
represent \citeauthor{lee96}'s \citeyearpar{lee96} original h-likelihood.
Here, we define the new h-likelihood
by modifying the original h-likelihood 
with a modification term 
$m(\boldsymbol{\theta};\mathbf{y})=\exp\{a(\boldsymbol{\theta};\mathbf{y})\}$, 
\begin{equation}\label{eq:h-lik}
h(\boldsymbol{\theta},\mathbf{v})
\equiv \ell_{e}(\boldsymbol{\theta},\mathbf{v};\mathbf{y})
+a(\boldsymbol{\theta};\mathbf{y}),
\end{equation}
such that the maximization of $h(\boldsymbol{\theta},\mathbf{v})$ 
yields MLEs of all fixed parameters. 
It is worth noting that such a modification 
$a(\boldsymbol{\theta};\mathbf{y})$ always exists. 
For example, let $a(\boldsymbol{\theta};\mathbf{y})
=-\ell_{p}(\widetilde{\mathbf{v}};\boldsymbol{\theta},\mathbf{y})$, where 
\begin{equation*}
\widetilde{\mathbf{v}}
= \widetilde{\mathbf{v}}(\boldsymbol{\theta})
=\argmax_{\mathbf{v}}h(\boldsymbol{\theta},\mathbf{v};\mathbf{y})
=\argmax_{\mathbf{v}}\ell_{p}(\mathbf{v};\boldsymbol{\theta},\mathbf{y}),
\end{equation*}
then the MHLEs of $\boldsymbol{\theta}$ coincide with the MLEs, because 
\begin{equation}
\label{eq:htilde}
h(\boldsymbol{\theta},\widetilde{\mathbf{v}})
=\ell (\boldsymbol{\theta};\mathbf{y})
=\log f_{\boldsymbol{\theta}}(\mathbf{y}). 
\end{equation}
Note here that, while the MLEs remain invariant 
with respect to any transformation of $\boldsymbol{\theta}$, 
\citet{lee05} demonstrated that the MHLEs of $\mathbf{v}$ 
can only be invariant under a linear transformation.
We consider $\mathbf{v}^{\ast}$ 
as a data-dependent linear transformation of $\mathbf{v}$
for given $\mathbf{y}$: 
\begin{equation*}
\mathbf{v}^{\ast}=\exp \{a(\boldsymbol{\theta};\mathbf{y})\}\cdot \mathbf{v}. 
\end{equation*}
This transformation produces the Jacobian term 
$|\partial \mathbf{v}^{\ast}/\partial \mathbf{v}|
=\exp \{a(\boldsymbol{\theta};\mathbf{y})\}.$ 
Thus, the new h-likelihood can be interpreted 
as an extended likelihood on the data-dependent scale $\mathbf{v}^{\ast}$, 
\begin{equation}\label{eq:h-lik-scale}
h(\boldsymbol{\theta},\mathbf{v})
=\ell (\boldsymbol{\theta};\mathbf{y})
+\ell_{p}(\mathbf{v};\boldsymbol{\theta},\mathbf{y})+a(\boldsymbol{\theta};\mathbf{y})
=\ell(\boldsymbol{\theta};\mathbf{y})+\ell_{p}
(\mathbf{v}^{\ast};\boldsymbol{\theta},\mathbf{y})
=\ell_{e}(\boldsymbol{\theta},\mathbf{v}^{\ast};\mathbf{y}).
\end{equation}
Note that joint maximization of
$\ell_{e}(\boldsymbol{\theta},\mathbf{v}^{\ast};\mathbf{y})$ and 
that of $\ell_{e}(\boldsymbol{\theta},\mathbf{v};\mathbf{y})$ 
provides different estimators for $\boldsymbol{\theta}$ in general. 
If $a(\boldsymbol{\theta};\mathbf{y})
=-\ell_{p}(\widetilde{\mathbf{v}};\boldsymbol{\theta},\mathbf{y})$, 
then $\mathbf{v}^{\ast}=\mathbf{v}^{c}$ is the canonical scale 
in LMMs and $h(\boldsymbol{\theta},\mathbf{v})
=\ell_{e}(\boldsymbol{\theta},\mathbf{v}^{\ast};\mathbf{y})$ 
becomes identical to the h-likelihood of \citet{lee23}
for deep neural network models.

In \citet{lee96}, the joint density for data generation and
their h-likelihood for inference were of an identical form. However, in the
new h-likelihood, the joint density 
\begin{equation*}
f_{\boldsymbol{\theta}}\left( \mathbf{v}^{\ast}\right) 
f_{\boldsymbol{\theta}}\left( \mathbf{y|v}^{\ast}\right) 
\end{equation*}
cannot be used for data generation because both 
$f_{\boldsymbol{\theta}}\left( \mathbf{v}^{\ast}\right) $ 
and $f_{\boldsymbol{\theta}}\left( \mathbf{y|v}^{\ast}\right) $ 
are often intractable. 
For data generation, we use the joint density for the original h-likelihood 
\begin{equation*}
f_{\boldsymbol{\theta}}\left( \mathbf{v}\right) 
f_{\boldsymbol{\theta}}\left( \mathbf{y|v}\right) 
=\ell_{e}(\boldsymbol{\theta},\mathbf{v};\mathbf{y}). 
\end{equation*}
As the canonical scale $\mathbf{v}^{\ast}$ 
for the new h-likelihood can be data-dependent 
(for an example, see Poisson-gamma HGLM in Example 2 of Section \ref{sec:bartlizable}), 
the new h-likelihood for inference is not necessarily
identical to the joint density for data generation: 
\begin{equation*}
h(\boldsymbol{\theta},\mathbf{v})
=\ell_{e}(\boldsymbol{\theta},\mathbf{v}^{\ast};\mathbf{y})
\neq \log f_{\boldsymbol{\theta}}\left( \mathbf{y,v}\right) 
=\ell_{e}(\boldsymbol{\theta},\mathbf{v};\mathbf{y)}. 
\end{equation*}

\medskip
\noindent Example 1 (continued)
In LMMs, the joint maximization of
\citeauthor{henderson59}'s \citeyearpar{henderson59} joint density
$\ell_{e}\left( \boldsymbol{\theta},\mathbf{v}\right)$
cannot provide MLEs for the variance components $\sigma^{2}$ and $\lambda$.
The canonical scale
$v_{i}^{\ast}=v_{i} \exp \{a(\boldsymbol{\theta};\mathbf{y})\}$
with
\begin{equation*}
a(\boldsymbol{\theta};\mathbf{y})
=-\frac{n}{2}\log \left( \frac{\sigma^{2}+m\lambda}{\lambda\sigma^{2}}\right)
\end{equation*}
leads to the new h-likelihood \eqref{eq:h-lik},
\begin{equation*}
h(\boldsymbol{\theta},\mathbf{v})
=\ell_{e}(\boldsymbol{\theta},\mathbf{v})+a(\boldsymbol{\theta};\mathbf{y})
=\ell_{e}(\boldsymbol{\theta},\mathbf{v})
-\frac{n}{2}\log \left( \frac{\sigma^{2}+m\lambda}{\lambda\sigma
^{2}}\right), 
\end{equation*}
whose MHLEs become the MLEs of $\boldsymbol{\theta}$ and the BUPs of $v_{i}$.
Given $\boldsymbol{\theta}$, the MHLE of $v_{i}$ is
\begin{equation*}
\widetilde{v}_{i}
=\frac{\lambda}{\sigma^{2}+m\lambda}
\ \sum_{j=1}^{m} (y_{ij}-\mathbf{x}_{ij}^{\intercal} \boldsymbol{\beta})
=\textrm{E}(v_{i}|\mathbf{y}), 
\end{equation*}
which is called the best linear unbiased predictor (BLUP) \citep{henderson59}. 
\hfill \qed

Suppose that $\widetilde{u}_{i}=$E$(u_{i}|\mathbf{y})$ is the BUP of $u_{i}$. 
For an arbitrary scale $v_{i}=v(u_{i})$, 
the plug-in method of MHLE cannot give the BUP of $v_{i}$,
because $\textrm{E}({v}_{i}|\mathbf{y}) \neq v(\widetilde{u}_{i})$ 
unless $v(\cdot)$ is a linear function. 
The BUP property can be invariant only 
with respect to a data-dependent linear transformation, 
not a non-linear transformation of the random parameter. 
Thus, for an arbitrary scale of random parameters,
MHLEs can achieve the BUP property only asymptotically.

\subsection{Bartlizable scale of random parameters}
\label{sec:bartlizable}

\citet{firth06} noted that the definition of weak canonical scale can be
ambiguous if there is no fixed effect in the linear predictor. 
In this section, we focus on generally defining the scale of random parameters 
to form the h-likelihood.
For notational convenience, we define the h-score and the h-information as
\begin{equation*}
S(\boldsymbol{\theta},\mathbf{v})
=\nabla_{\boldsymbol{\theta},\mathbf{v}} h(\boldsymbol{\theta},\mathbf{v}),
\quad \textrm{and} \quad
I(\boldsymbol{\theta},\mathbf{v})
=-\nabla^2_{\boldsymbol{\theta},\mathbf{v}} h(\boldsymbol{\theta},\mathbf{v})
=\begin{bmatrix}
I_{\boldsymbol{\theta}\boldsymbol{\theta}} 
& I_{\boldsymbol{\theta}\mathbf{v}} \\ 
I_{\mathbf{v}\boldsymbol{\theta}} & I_{\mathbf{v}\mathbf{v}}
\end{bmatrix},
\end{equation*}
and denote the observed and expected h-information by 
$\widehat{I}=I(\widehat{\boldsymbol{\theta}},\widehat{\mathbf{v}})$ and
$\mathcal{I}_{\boldsymbol{\theta}}
=\textrm{E}\left\{ I(\boldsymbol{\theta},\mathbf{v})\right\}$,
respectively.
\citet{meng09} showed that \citeauthor{lee96}'s \citeyearpar{lee96} 
h-likelihood $\ell_{e}(\boldsymbol{\theta},\mathbf{v})$ 
with the weak canonical scale is `Bartlizable', if it exists,
satisfying the first and second Bartlett identities: 
\begin{equation*}
\textrm{E}\left[ \nabla_{\boldsymbol{\theta},\mathbf{v}} \ell_{e}(\boldsymbol{\theta},\mathbf{v}) \right] = 0
\ \ \textrm{and}\ \ 
\textrm{E}\left[ 
\left\{\nabla_{\boldsymbol{\theta},\mathbf{v}} \ell_{e}(\boldsymbol{\theta},\mathbf{v})\right\}
\left\{\nabla_{\boldsymbol{\theta},\mathbf{v}} \ell_{e}(\boldsymbol{\theta},\mathbf{v})\right\}^{\intercal}
+\nabla^2_{\boldsymbol{\theta},\mathbf{v}} 
\ell_{e}(\boldsymbol{\theta},\mathbf{v}) \right] =0.
\end{equation*}
Identifying a Bartlizable scale is important for variance estimation;
otherwise, it could fail to be non-negative definite.
\citet{meng09} introduced a sufficient condition 
for Bartlizability that $f_{\boldsymbol{\theta}}(\mathbf{v})=0$ 
and $\nabla_{\mathbf{v}} f_{\boldsymbol{\theta}}(\mathbf{v})=0$ 
at the boundary $\partial \Omega_{\mathbf{v}}$ 
of the support $\Omega_{\mathbf{v}}$ of $\mathbf{v}$. 
In this paper, 
we adopt the concept of Bartlizable for 
$h\left( \boldsymbol{\theta},\mathbf{v}\right)$ 
and define it as the new h-likelihood
when the scale $\mathbf{v}$ satisfies this property.

\begin{definition}
$h(\boldsymbol{\theta},\mathbf{v})$ in the form of \eqref{eq:h-lik} 
is called the h-likelihood if it is Bartlizable, 
satisfying the first and second Bartlett identities as follows: 
\begin{equation*}
\textrm{E}\left[ S(\boldsymbol{\theta},\mathbf{v})\right] =0
\quad \textrm{and} \quad 
\textrm{E}\left[ S(\boldsymbol{\theta},\mathbf{v})
S(\boldsymbol{\theta},\mathbf{v})^{\intercal}
-I(\boldsymbol{\theta},\mathbf{v})\right] =0.
\end{equation*}
\end{definition}

The Bartlizable scale clearly resolves the ambiguity 
in defining the scale of $\mathbf{v}$ for the h-likelihood.
Since it does not require a linear predictor, 
it can be defined in the broader class of models 
than the weak-canonical scale.
The following lemma ensures the existence of a Bartlizable scale
and provides guidance for identification.

\begin{lemma}
\label{lem:bartlett} (a) For any continuous random parameters 
$(u_{1},...,u_{n})^{\intercal}$, 
there exists a one-to-one transformation to the Bartlizable scale 
$v_{i}=g_{v}(u_{i})$.
(b) The scale $\mathbf{v}=(v_{1},...,v_{n})^{\intercal}$ is Bartlizable if 
\begin{equation*}
f_{\boldsymbol{\theta}}(\mathbf{v}|\mathbf{y})=0
\quad \textrm{and}\quad 
\nabla_{\mathbf{v}} f_{\boldsymbol{\theta}}(\mathbf{v}|\mathbf{y})
=\mathbf{0}\quad \textrm{for all }\mathbf{v}\in \partial \Omega_{\mathbf{v}}.
\end{equation*}
(c) The scale $\mathbf{v}$ is Bartlizable if 
$f_{\boldsymbol{\theta}}(\mathbf{v})$ is differentiable 
for any $\mathbf{v}\in \Omega_{\mathbf{v}}=\mathbb{R}^{n}$.
\end{lemma}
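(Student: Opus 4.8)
The plan is to establish (b) first, since it carries the analytic content, and then obtain (c) and (a) as consequences. For (b), I would start from $h(\boldsymbol{\theta},\mathbf{v})=\ell(\boldsymbol{\theta};\mathbf{y})+\log f_{\boldsymbol{\theta}}(\mathbf{v}|\mathbf{y})+a(\boldsymbol{\theta};\mathbf{y})$ and use that $a$ is free of $\mathbf{v}$, so every derivative of $h$ in $\mathbf{v}$ equals the corresponding derivative of $\log f_{\boldsymbol{\theta}}(\mathbf{v}|\mathbf{y})$. Thus the first and second Bartlett identities for $S(\boldsymbol{\theta},\mathbf{v})$ and $I(\boldsymbol{\theta},\mathbf{v})$ decompose blockwise: the blocks involving only $\nabla_{\boldsymbol{\theta}}$ reduce to the ordinary Bartlett identities for $f_{\boldsymbol{\theta}}(\mathbf{y})$ and for $f_{\boldsymbol{\theta}}(\mathbf{v}|\mathbf{y})$ in $\boldsymbol{\theta}$, plus the contribution of the modification, and hold under the standing regularity conditions given the way $a$ is constructed (on the Bartlizable scale the profiled predictive likelihood is free of $\boldsymbol{\theta}$, so $h$ agrees with a genuine joint log-density up to a $\boldsymbol{\theta}$-free additive term). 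What is left is to control the blocks containing $\nabla_{\mathbf{v}}$, which is exactly where the scale hypothesis is used.

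Conditioning on $\mathbf{y}$, the $\mathbf{v}$-part of the first identity is $\textrm{E}_{\mathbf{v}|\mathbf{y}}[\nabla_{\mathbf{v}}\log f_{\boldsymbol{\theta}}(\mathbf{v}|\mathbf{y})]=\int_{\Omega_{\mathbf{v}}}\nabla_{\mathbf{v}} f_{\boldsymbol{\theta}}(\mathbf{v}|\mathbf{y})\,d\mathbf{v}$, which by the divergence theorem equals $\oint_{\partial\Omega_{\mathbf{v}}} f_{\boldsymbol{\theta}}(\mathbf{v}|\mathbf{y})\,\mathbf{n}\,dS$ and vanishes under the first hypothesis $f_{\boldsymbol{\theta}}(\mathbf{v}|\mathbf{y})=0$ on $\partial\Omega_{\mathbf{v}}$. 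For the second identity I would use the pointwise identity $\nabla^2_{\mathbf{v}\mathbf{v}}\log f+(\nabla_{\mathbf{v}}\log f)(\nabla_{\mathbf{v}}\log f)^{\intercal}=f^{-1}\nabla^2_{\mathbf{v}\mathbf{v}} f$ with $f=f_{\boldsymbol{\theta}}(\mathbf{v}|\mathbf{y})$, so that the $\mathbf{v}\mathbf{v}$-block becomes $\int_{\Omega_{\mathbf{v}}}\nabla^2_{\mathbf{v}\mathbf{v}} f_{\boldsymbol{\theta}}(\mathbf{v}|\mathbf{y})\,d\mathbf{v}=\oint_{\partial\Omega_{\mathbf{v}}}\nabla_{\mathbf{v}} f_{\boldsymbol{\theta}}(\mathbf{v}|\mathbf{y})\otimes\mathbf{n}\,dS$ and vanishes under the second hypothesis $\nabla_{\mathbf{v}} f_{\boldsymbol{\theta}}(\mathbf{v}|\mathbf{y})=0$ on $\partial\Omega_{\mathbf{v}}$; the $\boldsymbol{\theta}\mathbf{v}$ cross-block then collapses by combining the first $\mathbf{v}$-identity (for the $\mathbf{y}$-measurable components of $S_{\boldsymbol{\theta}}$) with the same integration by parts (for the component $\nabla_{\boldsymbol{\theta}}\log f_{\boldsymbol{\theta}}(\mathbf{v}|\mathbf{y})$ of $S_{\boldsymbol{\theta}}$, which cancels against the mixed Hessian). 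Taking $\textrm{E}_{\mathbf{y}}$ of these conditional zeros proves (b). This is the same mechanism as in \citet{meng09}'s sufficient condition, with the predictive density $f_{\boldsymbol{\theta}}(\mathbf{v}|\mathbf{y})$ in place of the marginal $f_{\boldsymbol{\theta}}(\mathbf{v})$.

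Part (c) follows from (b): on $\Omega_{\mathbf{v}}=\mathbb{R}^{n}$ the relevant boundary is at infinity, a differentiable density $f_{\boldsymbol{\theta}}(\mathbf{v})$ on $\mathbb{R}^{n}$ has $f_{\boldsymbol{\theta}}(\mathbf{v})\to0$ and $\nabla_{\mathbf{v}} f_{\boldsymbol{\theta}}(\mathbf{v})\to0$ as $\|\mathbf{v}\|\to\infty$ under the standing integrability conditions, and $f_{\boldsymbol{\theta}}(\mathbf{v}|\mathbf{y})\propto f_{\boldsymbol{\theta}}(\mathbf{y}|\mathbf{v})\,f_{\boldsymbol{\theta}}(\mathbf{v})$ inherits differentiability and the decay (since $f_{\boldsymbol{\theta}}(\mathbf{y}|\mathbf{v})$ is differentiable and bounded in $\mathbf{v}$ for the model classes considered), so the boundary integrals in (b) vanish. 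For (a), given continuous random parameters $(u_{1},\dots,u_{n})^{\intercal}$ whose common support is an open interval (or $\mathbb{R}$), choose a smooth strictly increasing bijection $g_{v}$ of that interval onto $\mathbb{R}$; then $v_{i}=g_{v}(u_{i})$ is one-to-one, $\mathbf{v}$ has support $\mathbb{R}^{n}$, and its density is differentiable because $g_{v}$ is smooth and $f_{\boldsymbol{\theta}}$ is differentiable, so (c) shows $\mathbf{v}$ is Bartlizable.

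I expect the main obstacle to be the honest treatment of the $\boldsymbol{\theta}$-block and the $\boldsymbol{\theta}\mathbf{v}$ cross-block once the data- and parameter-dependent modification $a(\boldsymbol{\theta};\mathbf{y})$ and the accompanying data-dependent rescaling of $\mathbf{v}$ are in play: one must check that these neither break the ordinary $\boldsymbol{\theta}$-Bartlett identities nor create uncontrolled boundary contributions, which is exactly where the precise construction of $a$ — equivalently, the identification of the Bartlizable scale on which the profiled predictive likelihood is independent of $\boldsymbol{\theta}$ — carries the argument. A secondary but necessary point is to justify the differentiation under the integral sign and the two applications of the divergence theorem, together with the decay at infinity invoked in (c), from the regularity conditions collected in the Appendix rather than assuming them ad hoc.
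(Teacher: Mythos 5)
Your treatment of (b) is essentially the paper's own argument: condition on $\mathbf{y}$, use $f_{\boldsymbol{\theta}}(\mathbf{v}|\mathbf{y})=0$ on $\partial\Omega_{\mathbf{v}}$ to make $\int_{\Omega_{\mathbf{v}}}\nabla_{\mathbf{v}}f_{\boldsymbol{\theta}}(\mathbf{v}|\mathbf{y})\,d\mathbf{v}$ vanish (first identity), use $\nabla_{\mathbf{v}}f_{\boldsymbol{\theta}}(\mathbf{v}|\mathbf{y})=\mathbf{0}$ on the boundary together with the pointwise identity $\nabla^2_{\mathbf{v}}\log f+(\nabla_{\mathbf{v}}\log f)(\nabla_{\mathbf{v}}\log f)^{\intercal}=f^{-1}\nabla^2_{\mathbf{v}}f$ for the second, and then average over $\mathbf{y}$. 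Your explicit integration by parts is in fact stated more carefully than the paper's display. The paper, like you, does not carry out a detailed accounting of the $\boldsymbol{\theta}$- and cross-blocks; it simply notes that it is enough to examine the $\mathbf{v}$-derivatives, so your worry about $a(\boldsymbol{\theta};\mathbf{y})$ is not where the two arguments part ways.

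The genuine gap is in (a). The claim is for \emph{any} continuous random parameters, but your construction --- an arbitrary smooth increasing bijection of the support onto $\mathbb{R}$ followed by an appeal to (c) --- requires the density of $u_{i}$ to be differentiable (and its support to be a single interval): composing with a smooth map cannot remove kinks or non-differentiability of $f_{\boldsymbol{\theta}}(u)$ (think of a Laplace or triangular density), and (c) demands differentiability of $f_{\boldsymbol{\theta}}(\mathbf{v})$ on all of $\mathbb{R}^{n}$. So as written you prove a strictly weaker statement than the lemma. The paper avoids any smoothness assumption by the specific choice $v_{i}=\mathrm{logit}\,F_{\boldsymbol{\theta}}(u_{i})$: by the probability integral transform the density of $v_{i}$ is $F_{\boldsymbol{\theta}}(u)(1-F_{\boldsymbol{\theta}}(u))$, i.e.\ the standard logistic density, with derivative $F_{\boldsymbol{\theta}}(u)(1-F_{\boldsymbol{\theta}}(u))(1-2F_{\boldsymbol{\theta}}(u))$, and both vanish at $v=\pm\infty$ regardless of the law of $u_{i}$, so \citet{meng09}'s boundary condition holds universally. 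A secondary difference: you obtain (c) from (b) via the conditional density, which forces the extra assumption that $f_{\boldsymbol{\theta}}(\mathbf{y}|\mathbf{v})$ is bounded in $\mathbf{v}$, whereas the paper verifies the sufficient condition directly on the marginal $f_{\boldsymbol{\theta}}(\mathbf{v})$ (arguing $f$ and $f'$ vanish at infinity from integrability) and does not route through $f_{\boldsymbol{\theta}}(\mathbf{v}|\mathbf{y})$ at all.
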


\subsection{Geometric Insights into h-Likelihood under LP and ELP}
In accordance with LP \citep{birnbaum62}, 
the marginal likelihood $L(\boldsymbol{\theta};\mathbf{y})$ 
exploits all the evidence about $\boldsymbol{\theta}$ in the data.
Similarly, in accordance with ELP \citep{bjornstad96},
the predictive likelihood $L_{p}(\mathbf{u};\boldsymbol{\theta},\mathbf{y})$ 
exploits all the evidence about $\mathbf{u}$ in the data,
implying that the prediction of random parameters 
should be based solely on 
$L_{p}(\mathbf{u};\boldsymbol{\theta},\mathbf{y})
=f_{\boldsymbol{\theta}}(\mathbf{u}|\mathbf{y})$.
This section demonstrates that the proposed h-likelihood fully exploits
all the evidence about fixed and random unknowns
by illustrating the relationship among 
the marginal likelihood, predictive likelihood, and h-likelihood.

Figure \ref{fig:hlik3D} illustrates the new h-likelihood  
$H(\boldsymbol{\theta},\mathbf{v})=\exp\{h(\boldsymbol{\theta},\mathbf{v})\}$
for joint inferences of $\boldsymbol{\theta}$ and $\mathbf{v}$.
The profile h-likelihood of $\boldsymbol{\theta}$
becomes the marginal likelihood,
\begin{equation*} \label{eq:hlik-maxv}
H(\boldsymbol{\theta},\widetilde{\mathbf{v}}(\boldsymbol{\theta}))
=L(\boldsymbol{\theta};\mathbf{y}).
\end{equation*}
Thus, LP implies that the h-likelihood 
exploits all the evidence about fixed unknowns
to give the MLE 
$\widehat{\boldsymbol{\theta}} = \argmax_{\boldsymbol{\theta}}L(\boldsymbol{\theta};\mathbf{y})
= \argmax_{\boldsymbol{\theta}} H(\boldsymbol{\theta},\widetilde{\mathbf{v}}(\boldsymbol{\theta}))$.
Furthermore,
\begin{equation*} \label{eq:hlik-maxtheta}
H(\widehat{\boldsymbol{\theta}},\mathbf{v})
\propto L_p(\mathbf{v}; \widehat{\boldsymbol{\theta}}, \mathbf{y})
\propto L_e(\widehat{\boldsymbol{\theta}}, \mathbf{v}),
\end{equation*}
where $L_p(\mathbf{v}; \widehat{\boldsymbol{\theta}}, \mathbf{y})$
is a profile predictive likelihood
and $L_e(\widehat{\boldsymbol{\theta}}, \mathbf{v})$
is an extended likelihood for $\mathbf{v}$ of \citet{bjornstad96}.
Thus, the ELP implies that the h-likelihood exploits 
all the evidence about random unknowns.
The Fisher likelihood serves as the upper bound 
when projecting the h-likelihood onto the parameter space $\boldsymbol{\Theta}$, while the profile predictive likelihood is the upper bound 
when projecting the h-likelihood onto the support of latent variables $\Omega_{\mathbf{v}}$.

\begin{figure}[tbp]
\centering
\includegraphics[width=0.6\linewidth]{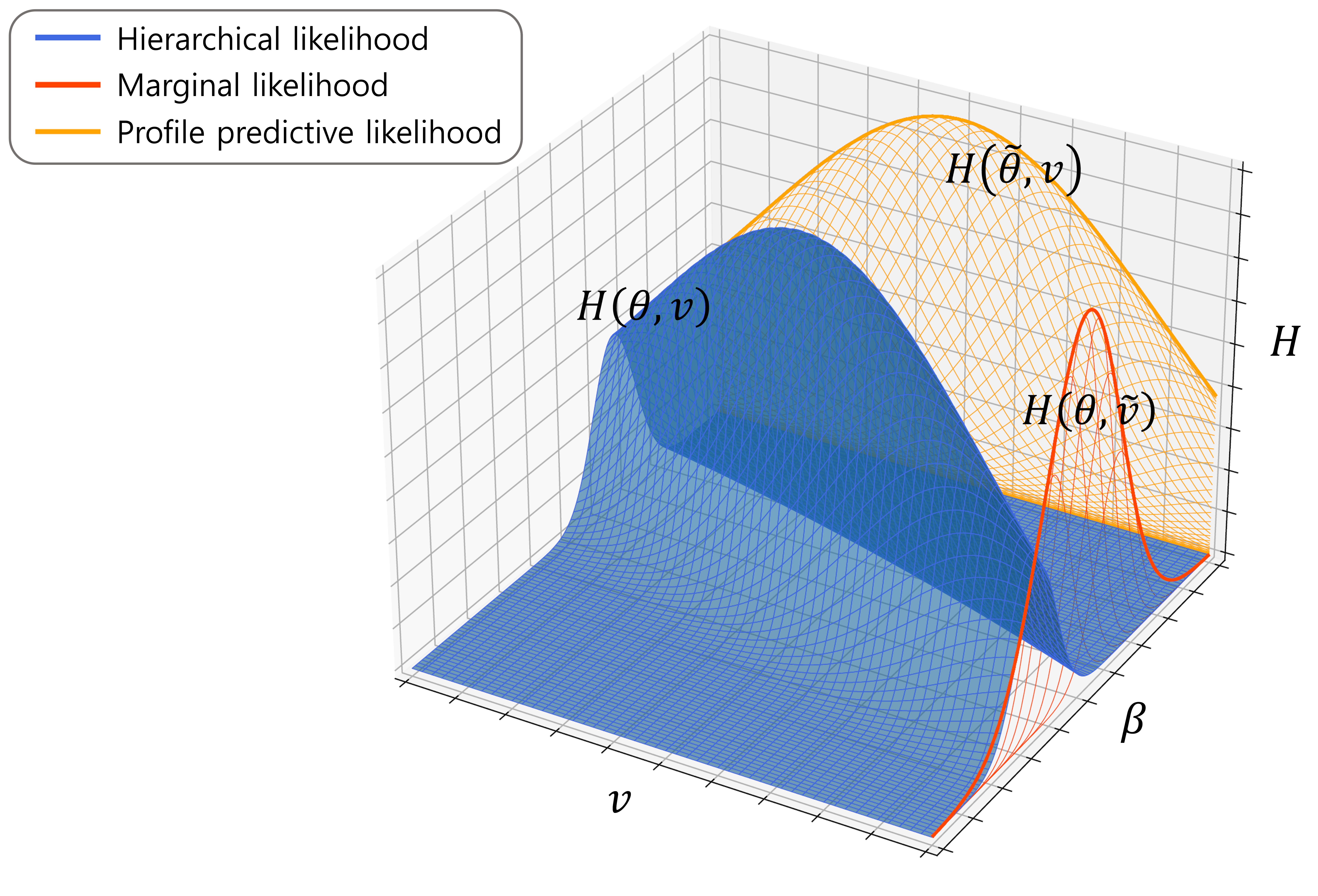}
\caption{The h-likelihood (blue), the marginal likelihood (red), and the
profile predictive likelihood (orange) when $y_{i}\sim N(x_{i} \theta +v,1)$
and $v\sim N(0,1)$ for $i=1,..., 2000$.}
\label{fig:hlik3D}
\end{figure}

\medskip
\noindent Example 1 (continued)
In LMMs, Lemma \ref{lem:bartlett}
implies that $\mathbf{v}$ is Bartlizable
and we showed that the h-likelihood $h(\boldsymbol{\theta},\mathbf{v})$ gives 
the MLE of $\boldsymbol{\theta}$ and BLUP of $v_{i}$.
Note that $\mathbf{u}$ is not weak canonical,
but it is Bartlizable to give an h-likelihood, 
\begin{equation*}
h(\boldsymbol{\theta},\mathbf{u})
=h\left( \boldsymbol{\theta},\mathbf{v}\right) -\sum_{i}\log u_{i}, 
\end{equation*}
which now gives MLEs of $\boldsymbol{\theta}$, 
whereas the extended likelihood 
$\ell_{e}(\boldsymbol{\theta},\mathbf{u})$ cannot give the MLEs.
This implies that the Bartlizable scale is 
more general than the weak canonical scale.
Here $h(\boldsymbol{\theta},\mathbf{u})$ gives the MHLE of $v_{i}$,
\begin{equation*}
\widetilde{v}_{i}^{\prime}
=\frac{\lambda}{\sigma^{2}+m\lambda}
\ \sum_{j=1}^{m} (y_{ij}-\mathbf{x}_{ij}^{\intercal} \boldsymbol{\beta})
- \frac{\sigma^{2}\lambda}{\sigma^{2}+m\lambda},
\end{equation*}
which is not the BUP of $v_{i}$, 
but it gives the BUP of $w_{i}=1/u_{i}^{2}$, 
\begin{equation*}
\widetilde{w}_{i}^{\prime}
=\exp \left[ \frac{-2\lambda
\sum_{j=1}^{m} (y_{ij}-\mathbf{x}_{ij}^{\intercal} \boldsymbol{\beta})
+2\sigma^{2}\lambda}
{\sigma^{2}+m\lambda}\right] 
=\textrm{E}(w_{i}|\mathbf{y}). 
\end{equation*}
Thus, $h(\boldsymbol{\theta},\mathbf{u})$ would be desirable for prediction
of $1/u_{i}^{2}$. This implies that there exist multiple h-likelihoods,
yielding BUPs for different scales of random unknowns in finite samples.
\hfill \qed

\medskip
\noindent Example 2 (Poisson-gamma HGLM)
Consider a Poisson-gamma HGLM with a log link,
\begin{equation*}
\eta_{ij}
=\log \mu_{ij}
=\log \{ \textrm{E}(y_{ij}|u_{i}) \}
=\mathbf{x}_{ij}^{\intercal}\boldsymbol{\beta}+\log u_{i},
\end{equation*}
where $y_{ij}|v_{i}\sim \textrm{Poisson}(\mu_{ij})$
and $u_{i}\sim \textrm{Gamma}(\alpha,\alpha)$
with $\textrm{E}(u_{i})=1$ and $\textrm{Var}(u_{i})=\lambda=1/\alpha$. 
The vector of fixed parameters $\boldsymbol{\theta}$ contains
$\boldsymbol{\beta}=(\beta_{0},\beta_{1},...,\beta_{p})^{\intercal}$ 
and the variance component $\alpha$. 
Here, $v_{i} = \log u_{i}$ is Bartlizable,
leading to the new h-likelihood,
$$
h(\boldsymbol{\theta},\mathbf{v})
= \ell_{e}(\boldsymbol{\theta},\mathbf{v}) + \sum_{i=1}^{n} a_i(\alpha;\mathbf{y})
=\sum_{i,j} (y_{ij} \log \mu_{ij} - \mu_{ij})
+ \sum_{i} \alpha (v_{i} - e^{v_{i}})
+ n\{\alpha \log \alpha - \log \Gamma(\alpha)\}
+ \sum_{i=1}^{n} a_i(\alpha;\mathbf{y}),
$$
where $
a_{i}(\alpha ;\mathbf{y)}
=(y_{i+}+\alpha)+\log \Gamma(y_{i+}+\alpha)-(y_{i+}+\alpha)\log(y_{i+}+\alpha)
$ and $y_{i+}=\sum_{j}y_{ij}$.
It provides the MLEs of all the parameters in $\boldsymbol{\theta}$
and the BLUP of $u_{i}$ \citep{lee96}.
The h-likelihood is equivalent to the use of
data-dependent canonical scale of \citet{lee24count},
\begin{equation*}
v_{i}^{c}=\frac{\Gamma(y_{i+}+\alpha)
\cdot \exp (y_{i+}+\alpha)}{(y_{i+}+\alpha)^{y_{i+}+\alpha}}
\cdot v_{i},
\end{equation*}
where
$L_{p}(\widetilde{\mathbf{v}}^{c};\boldsymbol{\theta},\mathbf{y})
=\max f_{\boldsymbol{\theta}}(\mathbf{v}^{c}|\mathbf{y})=0$
is independent of $\boldsymbol{\theta}$.
In the Poisson-gamma HGLM,
$\mathbf{u}$-scale is not Bartlizable
when $\alpha \leq 1$, since
\begin{equation*}
\textrm{E}\left[ \nabla_{u_i} h(\boldsymbol{\theta},\mathbf{u})\right] 
=\textrm{E}\left[ \textrm{E}\left[ u_i^{-1}(y_{i+}+\alpha -1)-(\mu_{i+}+\alpha)
|\mathbf{u}\right] \right] 
=(\alpha -1)\textrm{E}(u_{i}^{-1})-\alpha =\infty.
\end{equation*}
This gives a predictor $\widetilde{u}_{i}=0$ when $y_{i+}<1-\alpha$, 
which is not the BUP.
\hfill \qed

\begin{figure}[tbp]
\centering
\includegraphics[width=\linewidth]{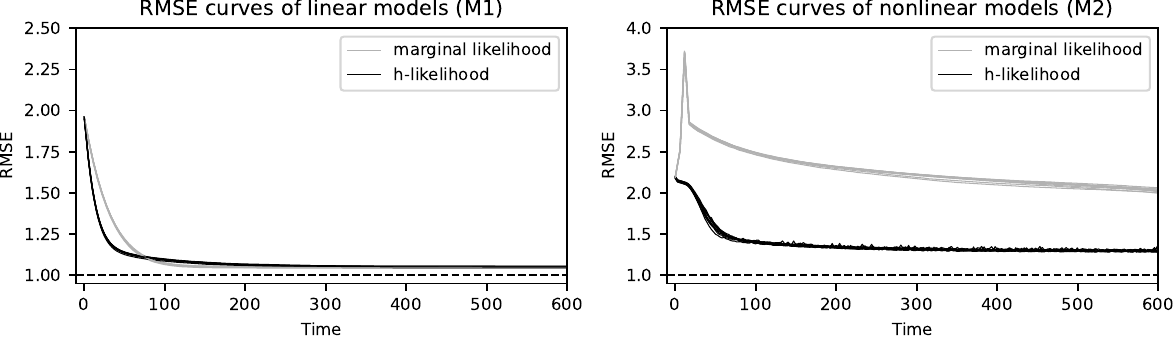}
\caption{RMSE curves of marginal likelihood (gray) 
and h-likelihood (black) approaches from 10 repetitions.
\label{fig:stochastic}}
\end{figure}

\subsection{Numerical studies for scalability}
\label{sec:stochastic}

In LMMs, both the marginal likelihood and the h-likelihood
yield MLEs for fixed parameters, producing identical inferences.
However, in neural networks with random effects, 
the estimators from the two likelihoods can be different
since they may not achieve the global optimum.
When random effects are correlated, 
stochastic optimization for large-scale data
may not be applicable to the marginal likelihood \citep{simchoni21, simchoni23},
thereby posing scalability challenges for current computational approaches for GLMMs \citep{lee24count},
whereas it can be straightforwardly applied to the h-likelihood.
To compare the performance of the two methods,
this section presents a numerical study based on the following two scenarios.

\begin{itemize}
\item M1 (linear): 
$y_{ij}$ is generated from $N(\mu_{ij},1)$ for $i=1,...,5000$ and $j=1,...,20$, where
\begin{align*}
\mu_{ij} = 1 + \frac{1}{5} \sum_{p=1}^{50} x_{ijp} + v_{i}.
\end{align*}
The following LMM is trained 
using $80\%$ of a dataset of size $N=100K$,
\begin{align*}
\text{M1: }
\mu_{ij} = \beta_{0} + \mathbf{x}_{ij}^\intercal \boldsymbol{\beta} + v_{i},
\end{align*}
where $\boldsymbol{\beta}$ denotes the regression coefficients.

\item M2 (nonlinear): 
$y_{ij}$ is generated from $N(\mu_{ij},1)$ for $i=1,...,5000$ and $j=1,...,20$, where
\begin{align*}
\mu_{ij} = \sum_{p=1}^{10} \frac{1}{1+x_{ijp}^2} + \sum_{p=11}^{20} \cos x_{ijp}
+ \left( \sum_{p=21}^{30} \cos x_{ijp} \right) \left( \sum_{p=31}^{40} \cos x_{ijp} \right)
+ v_{i}.
\end{align*}
The following neural network is trained 
using $80\%$ of a dataset of size $N=100K$,
\begin{align*}
\text{M2: }
\mu_{ij} = \text{NN}(\mathbf{x}_{ij}; \mathbf{w}) + v_{i},
\end{align*}
where $\mathbf{w}$ denotes a vector of all the weights in the network,
consisting of 4 hidden layers 
with (100, 50, 25, 12) number of nodes and ReLU activation function.
\end{itemize}

\noindent 
For both scenarios, $\mathbf{v} = (v_1,...,v_n)^\intercal$ 
is generated from a multivariate normal distribution  
with zero mean and an AR(1) covariance ($\rho=0.7$),
and $x_{ijp}$'s are generated from $\text{Uniform}(-\pi/2, \pi/2)$.
Each model is trained using either the marginal likelihood or the proposed h-likelihood,
optimized with the Adam optimizer (learning rate $0.001$ and batch size $1024$)
and implemented in Python with TensorFlow \citep{tensorflow}.

Figure \ref{fig:stochastic} shows the root mean squared error (RMSE) curves of the two methods over 10 repetitions. In scenario M1 (linear), RMSEs from both approaches converge to the same global optimum.
However, in scenario M2 (nonlinear), RMSEs from the two approaches converge to different optima, and those from the h-likelihood (black) decrease significantly faster than those of the marginal likelihood approach (gray). This highlights the advantages of h-likelihood in scalability for complex models with large datasets.

\section{Asymptotic properties of MHLEs}
\label{sec:asymptotic}

\subsection{Generalized Cram{\'e}r-Rao lower bound}
\label{sec:GCRLB}

Suppose that $\boldsymbol{\zeta}(\boldsymbol{\theta},\mathbf{v})$ 
is an arbitrary function of $(\boldsymbol{\theta},\mathbf{v})$ 
and $\widehat{\boldsymbol{\zeta}}(\mathbf{y})$ is an unbiased estimator of 
$\boldsymbol{\zeta}(\boldsymbol{\theta},\mathbf{v})$ such that 
\begin{equation*}
\textrm{E}\left[ \widehat{\boldsymbol{\zeta}}(\mathbf{y})
-\boldsymbol{\zeta}(\boldsymbol{\theta},\mathbf{v})\right] =0. 
\end{equation*}
Then, the following theorem presents the GCRLB, 
which generalizes the CRLB for fixed unknowns 
and the Bayesian CRLB \citep{van68} for random unknowns.

\begin{theorem}
\label{thm:hcrb} Let $
\mathcal{Z}_{\boldsymbol{\theta}}
=\mathrm{{E}\left[ \nabla_{\boldsymbol{\theta},\mathbf{v}} \boldsymbol{\zeta}(\boldsymbol{\theta},\mathbf{v})\right]}.
$
Under regularity conditions R1-R6 in Section \ref{app:proof} of the Appendix, 
\begin{equation}
\textrm{Var}\left[ \widehat{\boldsymbol{\zeta}}(\mathbf{y})
-\boldsymbol{\zeta}(\boldsymbol{\theta},\mathbf{v})\right] 
\geq \mathcal{Z}_{\boldsymbol{\theta}}
\mathcal{I}_{\boldsymbol{\theta}}^{-1}
\mathcal{Z}_{\boldsymbol{\theta}}^{\intercal},
\label{eq:hcrb}
\end{equation}
where the matrix inequality $A\geq B$ means that $A-B$ 
is positive semi-definite.
\end{theorem}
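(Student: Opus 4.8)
The plan is to mimic the classical Cram\'er--Rao argument, treating the pair $(\boldsymbol{\theta},\mathbf{v})$ as a joint ``parameter'' but taking expectations over the joint law of $(\mathbf{y},\mathbf{v})$ rather than the conditional law of $\mathbf{y}$ alone. First I would introduce the joint score vector $S(\boldsymbol{\theta},\mathbf{v}) = \nabla_{\boldsymbol{\theta},\mathbf{v}} h(\boldsymbol{\theta},\mathbf{v})$ and the error vector $\mathbf{e} = \widehat{\boldsymbol{\zeta}}(\mathbf{y}) - \boldsymbol{\zeta}(\boldsymbol{\theta},\mathbf{v})$, and consider the covariance between $\mathbf{e}$ and $S$. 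The key computation is to show that $\mathrm{E}[\mathbf{e}\, S^\intercal] = \mathcal{Z}_{\boldsymbol{\theta}}$, i.e.\ that the cross-covariance equals the expected gradient of $\boldsymbol{\zeta}$. This is where the structure of the h-likelihood and the regularity conditions enter: differentiating the unbiasedness identity $\mathrm{E}[\widehat{\boldsymbol{\zeta}}(\mathbf{y}) - \boldsymbol{\zeta}(\boldsymbol{\theta},\mathbf{v})] = 0$ under the integral sign (justified by R1--R6), where the expectation is $\int\int (\widehat{\boldsymbol{\zeta}}(\mathbf{y}) - \boldsymbol{\zeta}(\boldsymbol{\theta},\mathbf{v})) f_{\boldsymbol{\theta}}(\mathbf{y},\mathbf{v})\, d\mathbf{y}\, d\mathbf{v}$, and using that the first Bartlett identity $\mathrm{E}[S] = 0$ makes $h$ behave like a genuine log-likelihood with respect to the joint density (up to the data-dependent scale change $\mathbf{v}^\ast$, whose Jacobian is $\boldsymbol{\theta}$-free in the relevant direction).

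The second step is the standard positive-semidefiniteness trick: form the block covariance matrix of the stacked vector $(\mathbf{e}^\intercal, S^\intercal)^\intercal$,
\begin{equation*}
\mathrm{Var}\begin{bmatrix}\mathbf{e}\\ S\end{bmatrix}
= \begin{bmatrix} \mathrm{Var}(\mathbf{e}) & \mathcal{Z}_{\boldsymbol{\theta}} \\ \mathcal{Z}_{\boldsymbol{\theta}}^\intercal & \mathcal{I}_{\boldsymbol{\theta}} \end{bmatrix} \succeq 0,
\end{equation*}
where $\mathrm{Var}(S) = \mathcal{I}_{\boldsymbol{\theta}}$ is exactly the second Bartlett identity from the Definition preceding Lemma~\ref{lem:bartlett}. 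Taking the Schur complement with respect to the (invertible, by R-conditions) block $\mathcal{I}_{\boldsymbol{\theta}}$ immediately yields $\mathrm{Var}(\mathbf{e}) - \mathcal{Z}_{\boldsymbol{\theta}} \mathcal{I}_{\boldsymbol{\theta}}^{-1} \mathcal{Z}_{\boldsymbol{\theta}}^\intercal \succeq 0$, which is the claimed inequality \eqref{eq:hcrb}. I would also record the two sanity checks: when there is no $\mathbf{v}$, $h$ reduces to the Fisher log-likelihood and this is the usual CRLB; when $\boldsymbol{\theta}$ is absent, $\mathcal{I}_{\boldsymbol{\theta}}$ is the Bayesian information matrix $\mathrm{E}[-\nabla^2_{\mathbf{v}} \log f(\mathbf{y},\mathbf{v})]$ and we recover the Bayesian (van Trees) bound.

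The main obstacle, and the part requiring genuine care rather than bookkeeping, is establishing $\mathrm{E}[\mathbf{e}\, S^\intercal] = \mathcal{Z}_{\boldsymbol{\theta}}$ cleanly given that $h$ is \emph{not} the log of the joint density $f_{\boldsymbol{\theta}}(\mathbf{y},\mathbf{v})$ but rather $\ell_e(\boldsymbol{\theta},\mathbf{v}^\ast;\mathbf{y})$ on the data-dependent scale, together with a $\boldsymbol{\theta}$-dependent modification $a(\boldsymbol{\theta};\mathbf{y})$. One has to verify that the identity $\mathrm{E}[(\nabla_{\boldsymbol{\theta},\mathbf{v}} h)\, (\,\cdot\,)] $ still produces the right derivative-of-expectation terms; the cleanest route is to change variables to the Bartlizable scale and exploit that, by the Definition, $S(\boldsymbol{\theta},\mathbf{v})$ satisfies both Bartlett identities with respect to the joint law, so $h$ functions as a bona fide ``estimating-function log-density'' even though it is not literally $\log f_{\boldsymbol{\theta}}(\mathbf{y},\mathbf{v})$. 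The differentiation-under-the-integral step for the $\mathbf{v}$-components also needs the boundary conditions of Lemma~\ref{lem:bartlett}(b) (so that integration by parts in $\mathbf{v}$ contributes no boundary term), which is presumably folded into R1--R6; I would flag that dependence explicitly. Once the cross-covariance identity is in hand, the rest is the one-line Schur-complement argument.
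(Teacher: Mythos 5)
Your overall architecture---cross-covariance between the error $\mathbf{e}=\widehat{\boldsymbol{\zeta}}(\mathbf{y})-\boldsymbol{\zeta}(\boldsymbol{\theta},\mathbf{v})$ and a score vector, followed by the multivariate Cauchy--Schwarz/Schur-complement step---is the same as the paper's, and your final step and the two sanity checks (CRLB and Bayesian CRLB as special cases) are fine. The gap is precisely in the step you yourself flag as the crux: the identity $\textrm{E}[\mathbf{e}\,S^{\intercal}]=\mathcal{Z}_{\boldsymbol{\theta}}$ with $S=\nabla_{\boldsymbol{\theta},\mathbf{v}}h$. Differentiating the unbiasedness identity $\iint\{\widehat{\boldsymbol{\zeta}}(\mathbf{y})-\boldsymbol{\zeta}(\boldsymbol{\theta},\mathbf{v})\}f_{\boldsymbol{\theta}}(\mathbf{y},\mathbf{v})\,d\mathbf{y}\,d\mathbf{v}=0$ under the integral produces the score of the joint density, $\nabla_{\boldsymbol{\theta},\mathbf{v}}\ell_{e}(\boldsymbol{\theta},\mathbf{v})$, because $\nabla f_{\boldsymbol{\theta}}(\mathbf{y},\mathbf{v})=f_{\boldsymbol{\theta}}(\mathbf{y},\mathbf{v})\,\nabla\ell_{e}$; it does not produce $\nabla h$, which differs from $\nabla\ell_{e}$ by $\nabla_{\boldsymbol{\theta}}a(\boldsymbol{\theta};\mathbf{y})$ in the $\boldsymbol{\theta}$-components. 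The discrepancy is the term $\textrm{E}\bigl[\mathbf{e}\,\{\nabla_{\boldsymbol{\theta}}a(\boldsymbol{\theta};\mathbf{y})\}^{\intercal}\bigr]$, and since the modification term is data-dependent in the new h-likelihood (e.g.\ $a_{i}(\alpha;\mathbf{y})$ depends on $y_{i+}$ in the Poisson-gamma HGLM), unbiasedness $\textrm{E}[\mathbf{e}]=\mathbf{0}$ does not make it vanish. The Bartlett identities you invoke give only $\textrm{E}[S]=\mathbf{0}$ and $\textrm{Var}(S)=\mathcal{I}_{\boldsymbol{\theta}}$; they say nothing about the covariance of $S$ with the error of an arbitrary unbiased estimator, so the appeal to ``$h$ behaves like a bona fide log-density'' does not close this step.

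The paper's proof sidesteps the issue by carrying out the whole computation with the extended-likelihood score: it establishes $\textrm{Cov}(\mathbf{e},\nabla\ell_{e})=\mathcal{Z}_{\boldsymbol{\theta}}$ (using, beyond R1--R6, an explicit boundary condition of the type you flagged so that the $\mathbf{v}$-integration contributes no boundary term, stated as \eqref{eq:unbiased2}), computes $\textrm{Var}(\nabla\ell_{e})=\textrm{E}[-\nabla^{2}\ell_{e}]$, and then passes to $\mathcal{I}_{\boldsymbol{\theta}}=\textrm{E}[-\nabla^{2}h]$ under an additional explicit assumption on the modification term (either $a(\boldsymbol{\theta};\mathbf{y})$ does not depend on $\mathbf{y}$ or $\nabla_{\boldsymbol{\theta}}^{2}a$ is positive semi-definite) before applying the Cauchy--Schwarz step. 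So your proposal becomes a proof only if you either (i) replace $S$ by $\nabla\ell_{e}$ in your block covariance matrix and then add the assumption needed to trade $\textrm{Var}(\nabla\ell_{e})$ for $\mathcal{I}_{\boldsymbol{\theta}}$, or (ii) impose a condition on $a(\boldsymbol{\theta};\mathbf{y})$ (e.g.\ $\mathbf{y}$-free) under which $\textrm{Cov}(\mathbf{e},\nabla_{\boldsymbol{\theta}}a)=\mathbf{0}$; as written, the key cross-covariance identity is asserted rather than proved, and it fails in general for data-dependent modification terms.
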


When random parameters $\mathbf{v}$ are known,
the GCRLB \eqref{eq:hcrb} becomes the CRLB.
When fixed parameters $\boldsymbol{\theta}$ are known, 
the GCRLB \eqref{eq:hcrb}
becomes the Bayesian CRLB, since 
\begin{equation*}
\nabla_{\mathbf{v}}^{2}h(\boldsymbol{\theta},\mathbf{v})
=\nabla_{\mathbf{v}}^{2}\ell_{e}(\boldsymbol{\theta},\mathbf{v})
=\nabla_{\mathbf{v}}^{2}\ell_{p}(\mathbf{\mathbf{v};\boldsymbol{\theta},\mathbf{y}}).
\end{equation*}
Note that the Bayesian optimality can be achieved with the BUP, 
$\textrm{E}[\boldsymbol{\zeta}(\boldsymbol{\theta},\mathbf{v})|\mathbf{y}]$,
even in small samples; 
however, obtaining it in an explicit form can often be intractable.
The h-likelihood procedure does not require an explicit form of
$\textrm{E}[\boldsymbol{\zeta}(\boldsymbol{\theta},\mathbf{v})|\mathbf{y}]$,
since the MHLE is not the conditional mean but the mode.

\subsection{Asymptotic normality of MHLEs}
\label{sec:normality}
This section shows that the new h-likelihood
provides an optimality standard for both fixed and random unknowns
for establishing a proper extension of the ML procedure.
Due to the wide scope of statistical models with additional random unknowns,
asymptotic theory should be developed on a model-by-model basis.
For simplicity of discussion,
this section focuses on the HGLMs with linear predictors of the form 
$\boldsymbol{\eta}=\mathbf{X}\boldsymbol{\beta}+\mathbf{Z}\mathbf{v}$, 
including GLMMs \citep{breslow93} with random intercepts or random slopes.
Besides the sample size $N$, we should consider the number of random unknowns $n$
and the minimum number of observed data associated with each random unknown $m$.
Note that $m$ and $n$ can be represented differently depending on the model. 
For example, $m=0$ in the missing data problem in Section \ref{sec:small}.
In Example 1, 
the number of random unknowns is represented by
$n=\mathrm{{rank}(\mathbf{Z})}$
and the minimum number of observed data associated with each random unknown 
is represented by
$m=\mathrm{min}\{\mathrm{{diag}(\mathbf{Z}^{\intercal}\mathbf{Z})\}}$.
In this section, we establish asymptotic properties of MHLEs
when both $n\to\infty$ and $m\to\infty$.
Asymptotic properties of the MHLE for $\boldsymbol{\theta}$ are well established
because $\widehat{\boldsymbol{\theta}}$ is the MLE in the absence of $\mathbf{v}$.
Separately, asymptotic properties of the MHLE for $\mathbf{v}$ are also well established
because $\widehat{\mathbf{v}}$ is the maximum a posteriori (MAP) in the absence of $\boldsymbol{\theta}$.
The following theorem shows the asymptotic normality of the MHLEs
for both fixed and random unknowns, achieving the GCRLB.

\begin{theorem}
\label{thm:h-asymptotic} Under regularity conditions R1-R6,
MHLEs $\widehat{\boldsymbol{\theta}}$ and $\widehat{\mathbf{v}}$
are consistent and asymptotically normal,
\begin{equation*}
(\widehat{\boldsymbol{\theta}}-\boldsymbol{\theta}, 
\ \widehat{\mathbf{v}}-\mathbf{v})^\intercal
\overset{d}{\to }N\left( \mathbf{0},\ \mathcal{I}_{\boldsymbol{\theta}}^{-1}\right),
\end{equation*}
and the variance can be consistently estimated by 
$\widehat{I}^{-1}=I(\widehat{\boldsymbol{\theta}},\widehat{\mathbf{v}})^{-1}$
as $m\to\infty$ and $n\to\infty$.
\end{theorem}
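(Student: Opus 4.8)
The plan is to decompose the asymptotic analysis into the two blocks on which the excerpt says the theory is ``well established'' and then glue them together through the cross-information term $I_{\boldsymbol{\theta}\mathbf{v}}$. First I would fix the true value $(\boldsymbol{\theta}_0,\mathbf{v})$, where $\mathbf{v}$ is generated from $f_{\boldsymbol{\theta}_0}(\mathbf{v})$, and write the h-score $S(\boldsymbol{\theta},\mathbf{v})=\nabla_{\boldsymbol{\theta},\mathbf{v}}h(\boldsymbol{\theta},\mathbf{v})$. By the first Bartlett identity for the h-likelihood (Definition following Lemma~\ref{lem:bartlett}), $\mathrm{E}[S(\boldsymbol{\theta}_0,\mathbf{v})]=0$, and by the second Bartlett identity the score has covariance $\mathcal{I}_{\boldsymbol{\theta}}=\mathrm{E}[I(\boldsymbol{\theta}_0,\mathbf{v})]$. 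A standard Taylor expansion of the estimating equation $S(\widehat{\boldsymbol{\theta}},\widehat{\mathbf{v}})=0$ around $(\boldsymbol{\theta}_0,\mathbf{v})$ gives
\begin{equation*}
(\widehat{\boldsymbol{\theta}}-\boldsymbol{\theta}_0,\ \widehat{\mathbf{v}}-\mathbf{v})^{\intercal}
= I(\boldsymbol{\theta}_0,\mathbf{v})^{-1} S(\boldsymbol{\theta}_0,\mathbf{v}) + o_p(\cdot),
\end{equation*}
so the result follows once (i) $I(\boldsymbol{\theta}_0,\mathbf{v})/\mathcal{I}_{\boldsymbol{\theta}}\to\mathbf{1}$ in the appropriate blockwise sense, (ii) $\mathcal{I}_{\boldsymbol{\theta}}^{-1/2}S(\boldsymbol{\theta}_0,\mathbf{v})\overset{d}{\to}N(\mathbf{0},\mathbf{1})$, and (iii) the quadratic remainder is negligible. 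Achieving the GCRLB is then immediate: with $\boldsymbol{\zeta}(\boldsymbol{\theta},\mathbf{v})=(\boldsymbol{\theta},\mathbf{v})^{\intercal}$ we have $\mathcal{Z}_{\boldsymbol{\theta}}=\mathbf{1}$, so Theorem~\ref{thm:hcrb} says the asymptotic variance $\mathcal{I}_{\boldsymbol{\theta}}^{-1}$ is exactly the lower bound.

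For the consistency of the $\boldsymbol{\theta}$-block I would invoke the fact, noted in the text, that $\widehat{\boldsymbol{\theta}}$ coincides with the MLE from the marginal likelihood $L(\boldsymbol{\theta};\mathbf{y})=H(\boldsymbol{\theta},\widetilde{\mathbf{v}}(\boldsymbol{\theta}))$; standard MLE asymptotics under R1--R6 give $\widehat{\boldsymbol{\theta}}\overset{p}{\to}\boldsymbol{\theta}_0$ and $\sqrt{N}(\widehat{\boldsymbol{\theta}}-\boldsymbol{\theta}_0)$ asymptotically normal as $N\to\infty$ (equivalently $n\to\infty$). For the $\mathbf{v}$-block, for fixed $\boldsymbol{\theta}$ the MHLE $\widehat{\mathbf{v}}$ is the MAP maximizing $\ell_p(\mathbf{v};\boldsymbol{\theta},\mathbf{y})=\log f_{\boldsymbol{\theta}}(\mathbf{v}\mid\mathbf{y})$; in the HGLM with linear predictor $\boldsymbol{\eta}=\mathbf{X}\boldsymbol{\beta}+\mathbf{Z}\mathbf{v}$ each $v_i$ is informed by at least $m$ observations, so as $m\to\infty$ the conditional posterior concentrates and a Laplace-type / Bernstein--von Mises argument gives $\widehat{v}_i-v_i\to 0$ with the stated Gaussian limit and Hessian-based variance. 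The key technical point is that because $\ell_e$ is evaluated at the Bartlizable scale, $\nabla^2_{\mathbf{v}}h=\nabla^2_{\mathbf{v}}\ell_p$ is (asymptotically) non-negative definite, so $I_{\mathbf{v}\mathbf{v}}$ is a legitimate precision and the plug-in estimator $\widehat{I}^{-1}$ is well defined.

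The genuinely new part is coupling the two blocks when $m\to\infty$ \emph{and} $n\to\infty$ simultaneously, since the joint Hessian is not block-diagonal. I would handle this by a partitioned-inverse argument: write $I(\boldsymbol{\theta}_0,\mathbf{v})$ in the $2\times2$ block form of Section~\ref{sec:bartlizable} and show that the off-diagonal block $I_{\boldsymbol{\theta}\mathbf{v}}$ is of smaller order than $(I_{\boldsymbol{\theta}\boldsymbol{\theta}} I_{\mathbf{v}\mathbf{v}})^{1/2}$ entrywise — in HGLMs $I_{\boldsymbol{\theta}\mathbf{v}}$ has entries of order $O(m)$ per cluster while $I_{\boldsymbol{\theta}\boldsymbol{\theta}}=O(N)=O(mn)$ and $I_{\mathbf{v}\mathbf{v}}$ is $n\times n$ with $O(m)$ diagonal, so the Schur complement of $I_{\boldsymbol{\theta}\boldsymbol{\theta}}$ equals $I_{\boldsymbol{\theta}\boldsymbol{\theta}}(1+o(1))$ and likewise for the $\mathbf{v}$-block. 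This is exactly the fact that estimating $\boldsymbol{\theta}$ and predicting $\mathbf{v}$ are asymptotically orthogonal, and it is what makes $\widehat{\boldsymbol{\theta}}$'s limit unaffected by having to predict $\mathbf{v}$. I would then pass from the observed information $I(\boldsymbol{\theta}_0,\mathbf{v})$ to $\mathcal{I}_{\boldsymbol{\theta}}$ by a law of large numbers over the $n$ clusters (the second Bartlett identity supplies the mean), and from $I(\boldsymbol{\theta}_0,\mathbf{v})$ to $\widehat{I}=I(\widehat{\boldsymbol{\theta}},\widehat{\mathbf{v}})$ by continuity plus the already-established consistency. The central limit step for $S(\boldsymbol{\theta}_0,\mathbf{v})$ is a CLT for a sum of independent (across clusters) mean-zero contributions, with a Lindeberg/Lyapunov condition guaranteed by R1--R6.

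I expect the main obstacle to be the uniform control of the remainder term in the joint Taylor expansion across the two differently-scaled blocks: one must show $\sup$ over a shrinking neighborhood of $(\boldsymbol{\theta}_0,\mathbf{v})$ of $\|\nabla^3 h\|$ times $\|(\widehat{\boldsymbol{\theta}}-\boldsymbol{\theta}_0,\widehat{\mathbf{v}}-\mathbf{v})\|^2$ is $o_p$ of the leading term, and the natural neighborhood is an anisotropic box (radius $n^{-1/2}$ in $\boldsymbol{\theta}$, radius $m^{-1/2}$ in each $v_i$) rather than a ball, so the usual scalar-rate bookkeeping has to be redone coordinate-wise. A secondary subtlety is that $\mathbf{v}$ is random, so ``consistency of $\widehat{\mathbf{v}}$'' is a statement about $\widehat{v}_i - v_i$ conditionally on $v_i$, uniformly in $i$; I would establish this by a maximal inequality over the $n$ clusters, which requires the per-cluster concentration to be uniform — this is where the condition $m\to\infty$ (each $v_i$ seen by growing data) and the regularity conditions R1--R6 are essential.
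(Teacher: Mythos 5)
Your top-level strategy (joint Taylor expansion of the h-score equation $S(\widehat{\boldsymbol{\theta}},\widehat{\mathbf{v}})=0$, a CLT for the score justified by the Bartlett identities, and a sandwich with the expected h-information) is a genuinely different route from the paper's. The paper instead uses the exact profile identity $h(\boldsymbol{\theta},\widetilde{\mathbf{v}}(\boldsymbol{\theta}))=\ell(\boldsymbol{\theta};\mathbf{y})$ to import standard MLE asymptotics for $\widehat{\boldsymbol{\theta}}$, shows by the chain rule that $\{-\nabla^{2}\ell(\boldsymbol{\theta};\mathbf{y})\}^{-1}$ is \emph{exactly} the Schur-complement block $\widetilde{I}^{\boldsymbol{\theta}\boldsymbol{\theta}}$, and then decomposes $\widehat{\mathbf{v}}-\mathbf{v}=(\widetilde{\mathbf{v}}(\widehat{\boldsymbol{\theta}})-\widetilde{\mathbf{v}}(\boldsymbol{\theta}))+(\widetilde{\mathbf{v}}(\boldsymbol{\theta})-\mathbf{v})$, treating the first term by the delta method and the second by the concentration $\mathrm{Var}(\mathbf{v}|\mathbf{y})\to0$ as $m\to\infty$, finally matching the pieces to the blocks of $\mathcal{I}_{\boldsymbol{\theta}}^{-1}$ by partitioned inversion; your route avoids this decomposition but must pay for it with the anisotropic remainder control and a growing-dimension CLT that you correctly flag.

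There is, however, a concrete error in your ``gluing'' step. You claim that because the entries of $I_{\boldsymbol{\theta}\mathbf{v}}$ are $O(m)$ while $I_{\boldsymbol{\theta}\boldsymbol{\theta}}=O(mn)$ and $\mathrm{diag}(I_{\mathbf{v}\mathbf{v}})=O(m)$, the Schur complement of $I_{\boldsymbol{\theta}\boldsymbol{\theta}}$ equals $I_{\boldsymbol{\theta}\boldsymbol{\theta}}(1+o(1))$, so that estimation of $\boldsymbol{\theta}$ and prediction of $\mathbf{v}$ are asymptotically orthogonal and $\widehat{\boldsymbol{\theta}}$'s limit is ``unaffected by having to predict $\mathbf{v}$.'' Entrywise smallness is not sufficient: the correction $I_{\boldsymbol{\theta}\mathbf{v}}I_{\mathbf{v}\mathbf{v}}^{-1}I_{\mathbf{v}\boldsymbol{\theta}}$ sums over the $n$ clusters and is of the same order $O(mn)$ as $I_{\boldsymbol{\theta}\boldsymbol{\theta}}$. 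In the one-way LMM of Example 1 the $\mu_{0}$-entry of $I_{\boldsymbol{\theta}\boldsymbol{\theta}}$ is $nm/\sigma^{2}$, while the Schur complement is $nm/(\sigma^{2}+m\lambda)$, of strictly smaller order in $m$ (see the $n=2$ information matrix displayed in the paper's appendix, whose inverse has top-left entry $(m+1)/(2m)$, not $1/(2m)$). Hence $\mathcal{I}^{\boldsymbol{\theta}\boldsymbol{\theta}}\neq\mathcal{I}_{\boldsymbol{\theta}\boldsymbol{\theta}}^{-1}$, and relying on your orthogonality claim would deliver the conditional-information variance $\mathcal{I}_{\boldsymbol{\theta}\boldsymbol{\theta}}^{-1}$, which can understate $\mathrm{Var}(\widehat{\boldsymbol{\theta}}-\boldsymbol{\theta})$ by an unbounded factor as $m\to\infty$; the theorem's covariance is the full $\mathcal{I}_{\boldsymbol{\theta}}^{-1}$, whose $\boldsymbol{\theta}$-block is the inverse Schur complement and encodes exactly the information lost by not observing $\mathbf{v}$. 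The fix is to drop the orthogonality claim entirely: your own first-paragraph expansion $(\widehat{\boldsymbol{\theta}}-\boldsymbol{\theta},\widehat{\mathbf{v}}-\mathbf{v})^{\intercal}\approx I(\boldsymbol{\theta}_{0},\mathbf{v})^{-1}S(\boldsymbol{\theta}_{0},\mathbf{v})$ does not need it, provided you carry the full partitioned inverse through (as the paper does); the non-negligible cross-block is also precisely why consistency of $\widehat{\mathbf{v}}$ needs $n\to\infty$ (through $\widehat{\boldsymbol{\theta}}$) in addition to $m\to\infty$.
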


Theorem \ref{thm:hcrb} and \ref{thm:h-asymptotic} 
show the asymptotic optimality of the MHLEs
from joint maximization of the h-likelihood.
In HGLMs with the linear predictor, 
\begin{equation*}
\eta_{ij}=g(\mu_{ij})=\mathbf{x}_{ij}^{\intercal}\boldsymbol{\beta}+v_{i},
\end{equation*}
we want to find a predictor $\widetilde{v}_{i}$ 
that gives the BUP of conditional mean $\mu_{ij}$ in finite samples, 
\begin{equation*}
\widetilde{\mu}_{ij}
=g^{-1}(\mathbf{x}_{ij}^{\intercal}\boldsymbol{\beta}
+\widetilde{v}_{i})=\textrm{E}(\mu_{ij}|\mathbf{y}).  
\end{equation*}
In Section \ref{app:BUP} of the Appendix,
we show that such $\widetilde{v}_{i}$ may not always exist,
for example, in binomial HGLMs with the logit link.
However, Theorem \ref{thm:h-asymptotic} implies that
the MHLEs provide asymptotic BUP for the conditional mean $\mu_{ij}$.
Proof of Theorem \ref{thm:h-asymptotic} is in
Section \ref{proof:h-asymptotic} of the Appendix.

\subsection{Numerical studies for convergence}

We conduct numerical studies to investigate
the properties of MHLEs for the LMM in Example 1
with the linear predictor
$
\eta_{ij} = \beta_0 + \beta_1 x_{ij} + v_{i},
$
where $x_{ij}$ is generated from Uniform$[-0.5,0.5]$
and all the true parameters are set to be 0.5.
To see the convergence, 
we let $m$ or $n$ increase in $\{5, 20, 80\}$.
Figure \ref{fig:consistency_nn} shows the box-plots 
for the estimation errors of each parameter from 500 replications.
The MHLE of $\beta_{1}$ for the within-cluster covariate $x_{ij}$
and the MHLE of the within-cluster variance $\sigma^{2}$
are consistent when either $m$ or $n$ increases.
However, the MHLE of an intercept $\beta_{0}$
and the MHLE of the between-cluster variance $\lambda$
are consistent only when $n$ increases.
Furthermore, the MHLE of a random parameter $v_1$
is consistent when both $m$ and $n$ increase:
$\widetilde{v}_{1}-v_{1}\overset{p}{\to}0$ when $m\to\infty$,
whereas $\widehat{v}_{1}-\widetilde{v}_{1}=\widetilde{v}_{1}(\widehat{\boldsymbol{\theta}})-\widetilde{v}_{1}(\boldsymbol{\theta})\overset{p}{\to}0$
when $\widehat{\boldsymbol{\theta}}\overset{p}{\to}\boldsymbol{\theta}$ as $n\to\infty$.
Thus, $\widehat{v}_{1}-v_{1}\to 0$ when both $m$ and $n$ increase.
Similar numerical results for the Poisson-gamma HGLM
are presented in Section \ref{app:PG-HGLM} of the Appendix.
Consequently, consistency of $\widehat{\boldsymbol{\theta}}$ requires $n\to\infty$,
whereas that of $\widehat{\mathbf{v}}$ requires both $m\to\infty$ and $n\to\infty$.

\begin{figure}[tbp]
\centering
\includegraphics[width=\linewidth]{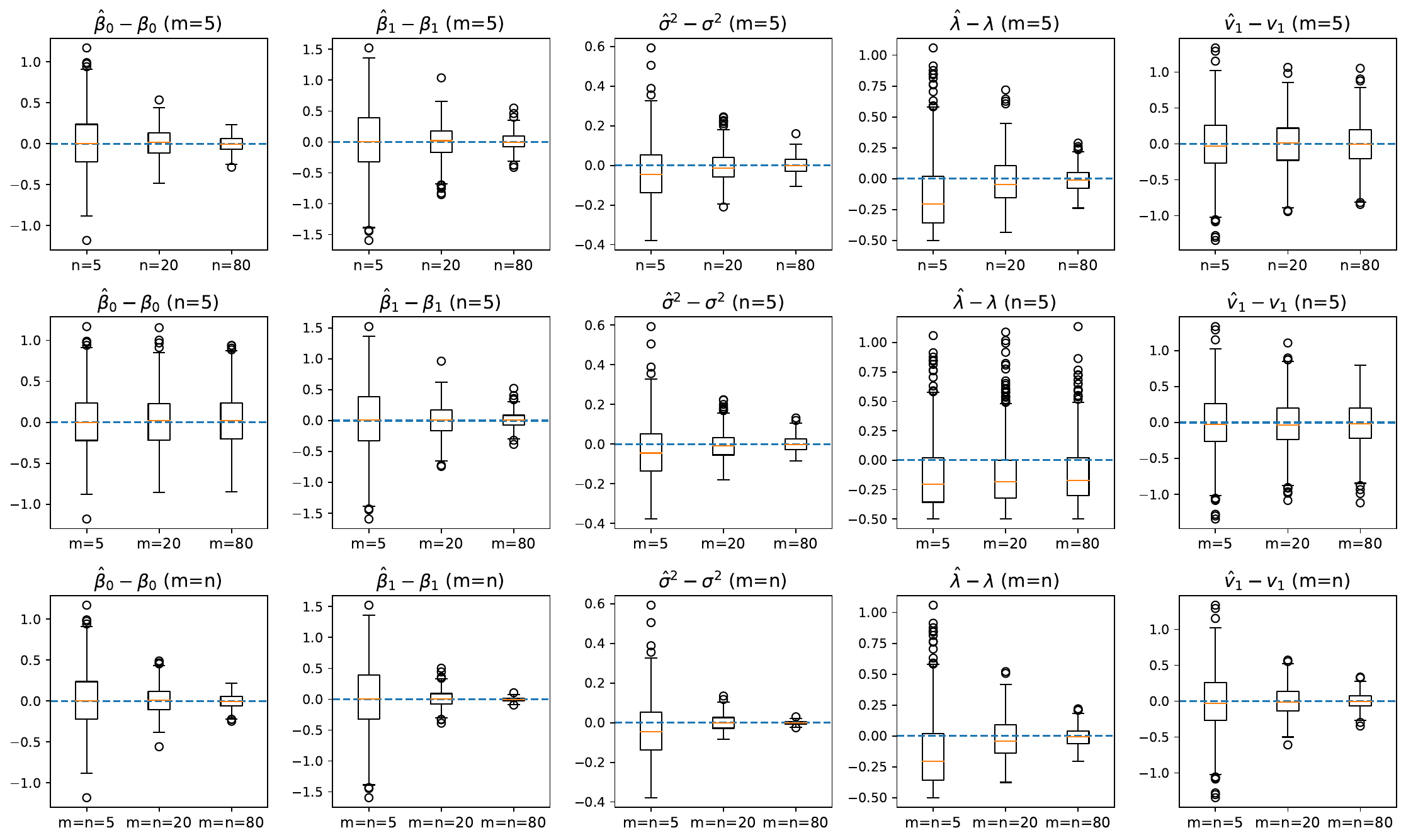}
\caption{Box-plots for the estimation errors of fixed and random unknowns in LMM. \label{fig:consistency_nn}}
\end{figure}

\section{Small sample properties of MHLEs} 
\label{sec:small}

Small sample properties of MLEs for fixed unknowns 
are fascinating but still controversial,
as the MLE loses less information than competing asymptotically
efficient estimators in small samples \citep{efron98}.
In Section \ref{sec:asymptotic}, 
we have shown the asymptotic optimality of the MHLEs,
where both $m\to\infty$ and $n\to\infty$.
If either $m$ or $n$ is small 
as in Figure \ref{fig:consistency_nn},
asymptotic properties of MHLEs may not hold.
This section demonstrates that MHLEs would inherit small sample properties of MLEs
when either $m$ or $n$ is small, 
using the two examples developed for illustrating inferential difficulties
of likelihood-based procedures.

\citet{meng09} pointed out that predictors of missing data 
cannot be consistent even if $\widehat{\boldsymbol{\theta}}$ is consistent,
\begin{equation*}
\widehat{\boldsymbol{\theta}}-\boldsymbol{\theta}=o_{p}(1)
\quad \textrm{and} \quad 
\widehat{\mathbf{v}}-\mathbf{v}=O_{p}(1),
\end{equation*}
and $\widehat{\mathbf{v}}-\mathbf{v}$
would not be asymptotically normal,
as $\mathbf{v}$ can be a non-normal random variable. 
Note here that $m=0$ in missing data problems,
since there are no observations associated with the missing value.
In longitudinal studies, $m$ is often small while the
number of subjects $n$ tends to be large.
When $n\to\infty$ but $m$ is finite,
as we mentioned in Section \ref{sec:asymptotic},
consistency and asymptotic normality 
may not be ensured in the prediction of random unknowns.
In Section \ref{sec:approximate_cd}, we show how to make 
asymptotically correct interval prediction for $\mathbf{v}$
as long as $\widehat{\boldsymbol{\theta}} \overset{p}{\to} \boldsymbol{\theta}$.

The difficulties in the prediction of missing data arise from small $m$.
Asymptotic theories of the MLEs for fixed unknowns can encounter 
similar difficulties when $n$ is small.
We present an example that as $m\to\infty$,
\begin{equation*}
\widehat{\mathbf{v}}-\mathbf{v}=o_{p}(1)
\quad \textrm{and} \quad 
\widehat{\boldsymbol{\theta}}-\boldsymbol{\theta}=O_{p}(1).
\end{equation*}
In this case, even though $\widehat{\mathbf{v}} - \mathbf{v}$ is consistent and asymptotically normal,
the MLEs are neither consistent nor asymptotically normal.
However, we can still show that 
the MHLEs could remain asymptotically optimal in both examples, 
even if either $m$ or $n$ is small.
In Section \ref{sec:finite}, we further study
how to construct an exact interval estimation and prediction when both $m$ and $n$ are small.

\subsection{Missing data problems with $m=0$}
\label{sec:small_m}

Suppose that $\mathbf{y}=(y_{1},...,y_{n})^{\intercal}$ 
is a vector of independent samples from $f_{\theta}(y_{i})$ 
with a fixed unknown $\theta$ 
and $u=y_{n+1}$ is an unobserved future outcome or missing data. 
As \citet{meng09} noted, the h-likelihood $h(\theta,v)$ yields MHLEs with
\begin{equation*}
\widehat{\theta}-\theta=o_{p}(1)
\quad \textrm{and}\quad 
\widehat{v}-v=O_{p}(1). 
\end{equation*}
Let $\varepsilon =\widetilde{v}-v$, 
then $\widehat{v}-v$ can be expressed as 
\begin{equation*}
\widehat{v}-v 
=\widehat{v}-\widetilde{v}+\varepsilon,
\end{equation*}
where $\widetilde{v}=\widetilde{v}(\theta,\mathbf{y})=\argmax_{v}h(\theta,v)$.
The consistency of MLE $\widehat{\theta}$ leads to 
\begin{equation*}
\widehat{v}-\widetilde{v}
=\widetilde{v}(\widehat{\theta},\mathbf{y})-\widetilde{v}(\theta,\mathbf{y})
=o_{p}(1) 
\end{equation*}
with $\textrm{Var}(\widehat{v}-\widetilde{v})\to 0$ as $n\to\infty$, 
whereas $\varepsilon=O_{p}(1)$ since 
\begin{equation*}
\textrm{Var}(\varepsilon)
=\textrm{E}\{\textrm{Var}(v|\mathbf{y})\}
+\textrm{Var}\{\widetilde{v}-\textrm{E}(v|\mathbf{y})\}
\geq \textrm{E}\{\textrm{Var}(v|\mathbf{y})\} 
\end{equation*}
may not decrease as $n$ increases. 
However, for the prediction of future (or missing) data, 
the error term $\varepsilon $ would be predicted as null,
which implies that $\widehat{v}$ is used for predicting $v$ 
by consistently estimating $\widetilde{v}$. 
In consequence, the inconsistency of $\widehat{v}$ noted by \citet{meng09}
is due to $\textrm{Var}(\varepsilon)\not\to 0$,
because $m=0$ for the future observation or missing data $u=y_{n+1}$.

Suppose that $v=g(u)$ is a Bartlizable scale to give 
$\widetilde{u}=g^{-1}(\widetilde{v})=\textrm{E}(u|\mathbf{y})$, 
and denote the inverse matrices of observed and expected h-information by 
\begin{equation*}
I(\theta,v)^{-1}=
\begin{bmatrix}
I_{\theta\theta} & I_{\theta v} \\ 
I_{v\theta} & I_{vv}
\end{bmatrix}
^{-1}=
\begin{bmatrix}
I^{\theta\theta} & I^{\theta v} \\ 
I^{v\theta} & I^{vv}
\end{bmatrix}
\quad \textrm{and}\quad \mathcal{I}_{\theta}^{-1}=
\begin{bmatrix}
\mathcal{I}_{\theta\theta} & \mathcal{I}_{\theta v} \\ 
\mathcal{I}_{v\theta} & \mathcal{I}_{vv}
\end{bmatrix}
^{-1}=
\begin{bmatrix}
\mathcal{I}^{\theta\theta} & \mathcal{I}^{\theta v} \\ 
\mathcal{I}^{v\theta} & \mathcal{I}^{vv}
\end{bmatrix}
, 
\end{equation*}
respectively. 
Then, the MHLE $\widehat{u}$ becomes an asymptotically unbiased predictor of $u$. 
Furthermore, the MHLEs asymptotically achieve the GCRLB as $n\to \infty$, 
\begin{equation*}
\textrm{Var}(\widehat{\theta}-\theta)
\to \mathcal{I}^{\theta \theta}
\quad \textrm{and}\quad 
\textrm{Var}(\widehat{u}-u)
\to \mathcal{I}^{vv}\cdot \left[ \textrm{E}\{g^{\prime }(u)\}\right]^{-2}. 
\end{equation*}
Thus, even though $\widehat{u}-u=O_{p}(1)$, the MHLEs can still give asymptotically
optimal estimation and prediction for fixed and random parameters. 
For estimating the variance of fixed parameter estimator, 
the observed h-information gives
\begin{align*}
\widehat{\textrm{Var}}(\widehat{\theta}-\theta)
=\widehat{I}^{\theta \theta}
&= \left[ -\nabla_{\theta}^2 h(\theta,\widetilde{v})\right]
_{\theta=\widehat{\theta}}^{-1}
=\left[ -\nabla^{2}_{\theta}\ell(\theta)\right]
_{\theta=\widehat{\theta}}^{-1}
\to \textrm{Var}(\widehat{\theta}-\theta),
\end{align*}
by the property of MLE. Here the observed h-information gives 
\begin{equation*}
\widehat{\textrm{Var}}\left( \widehat{v}-v\right) 
=\widehat{I}^{vv}
=\widehat{I}_{vv}^{-1}\widehat{I}_{v\theta}\widehat{I}^{\theta \theta}
\widehat{I}_{\theta v}\widehat{I}_{vv}^{-1}+\widehat{I}_{vv}^{-1}, 
\end{equation*}
which may not be consistent in general. 
However, if $v=g(u)$ is a normalizing transformation 
such that $v|\mathbf{y}\sim N(\widetilde{v},I_{vv}^{-1})$, 
where $I_{vv}^{-1}$ is free of $v$, 
then the consistency of $\widehat{\theta}$ leads to 
$\widehat{I}_{vv}^{-1}\to \mathcal{I}_{vv}^{-1}$ and 
\begin{equation*}
\widehat{\textrm{Var}}\left( \widehat{v}-v\right) 
\to \textrm{Var}\left(\widehat{v}-v\right).
\end{equation*}
Thus, when $v|\mathbf{y}$ is approximately normal, 
we can still obtain a reasonable estimator of 
$\textrm{Var}(\widehat{v}-v)$ from the observed h-information.

\medskip
\noindent Example 3
\citet{meng09} demonstrated difficulties in the prediction of random unknowns,
by considering an example with observed data 
$\mathbf{y}=(y_1,...,y_n)^{\intercal}$ and missing data $u=y_{n+1}$
from independent $y_{i}\sim\textrm{Exp}(1/\theta)$
for $i=1,...,n+1$.
He claimed that the Hessian matrix cannot give a consistent variance estimator, because 
\begin{equation}
\textrm{Var}(\widehat{v}-v)
=\frac{1}{n}+\frac{\pi^{2}}{6}>\mathcal{I}^{vv}(\theta)
=\frac{1}{n}+1.  \label{eq:break}
\end{equation}
Note that $m=0$ and $\widetilde{v}$ is not a BUP of $v$. 
However, $\widetilde{u}=\textrm{E}(u|\mathbf{y})$ is the BUP of $u$, 
which attains the GCRLB in Theorem \ref{thm:hcrb}. 
Let $\zeta(\theta,v)=\exp(v)=u$, then 
\begin{equation*}
\mathcal{Z}_{\theta}
=\textrm{E}\left\{ \nabla_{\theta,v} \zeta(\theta,v) \right\}
=\textrm{E}(0, u)^{\intercal}
= (0, \theta)^{\intercal}.
\end{equation*}
From \eqref{eq:break}, $\widehat{u}$ achieves the GCRLB
$\mathcal{Z}_{\theta}\mathcal{I}_{\theta}^{-1}\mathcal{Z}_{\theta}$ 
in Theorem \ref{thm:hcrb}, 
\begin{equation*}
\textrm{Var}(\widehat{u}-u)
=\theta^{2}\left( \frac{1}{n}+1\right) 
=\theta^{2}\mathcal{I}^{vv}(\theta), 
\end{equation*}
and the inverse of observed h-information 
$I(\widehat{\theta},\widehat{u})^{-1}$ gives a consistent variance estimator, 
\begin{equation*}
\widehat{\textrm{Var}}(\widehat{u}-u)
=\widehat{I}^{uu}
=\frac{\widehat{\theta}^{2}}{n}+\widehat{\theta}^{2}
\to \textrm{Var}(\widehat{u}-u), 
\end{equation*}
since $\widehat{\theta}\to \theta$ as $n\to\infty$.
In this example, since $m=0$,
the GCRLB cannot be achieved for arbitrary scale $g(v)$. 
The delta method may not give a consistent variance estimator, 
since $\widehat{I}^{vv}\not\to\textrm{Var}(\widehat{v}-v)$. 
In such small samples, the GCRLB can be achieved 
only for a specific scale of random parameters, having the BUP property.
Thus, the scale of random parameters is important 
to obtain asymptotically optimal MHLEs and their consistent variance estimators.
\hfill \qed

\subsection{Random effects models with $n=1$}
\label{sec:small_n}
This section presents an example that $n=1$ and as $m\to\infty$,
\begin{equation*}
\widehat{\boldsymbol{\theta}}-\boldsymbol{\theta}=O_{p}(1)
\quad \textrm{and} \quad 
\widehat{\mathbf{v}}-\mathbf{v}=o_{p}(1).
\end{equation*}

\medskip
\noindent Example 4
We consider a model 
with $U\sim\textrm{Exp}(\theta)$ and $Y_{j}|u\sim\textrm{Exp}(u)$
for $j=1,...,m$ such that
\begin{equation*}
f_{\theta}(u)=\theta \exp \left( -\theta u\right) 
\quad \textrm{and}\quad
f(y_{j}|u)=u\exp (-uy_{j}).
\end{equation*}
In this example, 
$n=\textrm{rank}(\mathbf{Z})=1$. 
The marginal log-likelihood is 
\begin{equation*}
\ell (\theta)=\log \theta -(m+1)\log (\theta +m\bar{y})+\log \Gamma(m+1). 
\end{equation*}
Then, the MLE is $\widehat{\theta}=\bar{y}=(y_{1}+\cdots +y_{m})/m$, 
whose expectation and variance are 
\begin{equation*}
\textrm{E}(\widehat{\theta}) =\textrm{E}(\bar{y}) =\infty 
\quad \textrm{and}\quad 
\textrm{Var}(\widehat{\theta}) =\textrm{Var}(\bar{y}) =\infty.
\end{equation*}
While $\widehat{\theta}$ is not consistent,
the h-likelihood can still give 
an optimal predictor of random parameter. 
Since Lemma \ref{lem:bartlett} implies that 
$v=\log u\in \Omega_{v}=\mathbb{R}$ is Bartlizable, 
we can define the h-likelihood as 
\begin{equation*}
h(\theta,v)=\ell_{e}(\theta,v)+a(\theta;\mathbf{y})
= \log \theta -e^{v}(\theta +m\bar{y})+v(m+1) +a(\theta;\mathbf{y}), 
\end{equation*}
where $a(\theta;\mathbf{y})=(m+1)\{1-\log (m+1)\}+\log \Gamma(m+1)$. 
Joint maximization of $h(\theta,v)$ leads to the MHLEs 
$\widehat{\theta}=\bar{y}$ and $\widehat{v}= -\log \bar{y}$. 
Here, $\widehat{u}=\exp(\widehat{v})=1/\bar{y}$
is a consistent predictor of $u$
and the MHLE of $v$ has
\begin{equation*}
\textrm{E}(\widehat{v}-v)=\psi (m)-\log m\to 0
\quad \textrm{and} \quad 
\textrm{Var}(\widehat{v}-v)=\psi^{(1)}(m)\to 0, 
\end{equation*}
as $m\to \infty $, where $\psi(\cdot)$ and $\psi^{(1)}(\cdot)$
are digamma and trigamma functions, respectively. Since $\psi^{(1)}(m)\geq
1/m$ with $\psi^{(1)}(m)-1/m=O(m^{-2})$, the MHLE of $v$ is consistent and
asymptotically achieves the GCRLB,
\begin{equation*}
\textrm{Var}(\widehat{v}-v)
=\psi^{(1)}(m)
=\frac{1}{m}\left[ 1+O\left( \frac{1}{m}\right) \right] 
\geq (0,1) \ \mathcal{I}(\theta,v)^{-1} \ (0,1)^{\intercal}
=\frac{1}{m}. 
\end{equation*}
Though $\widehat{\theta}-\theta=O_{p}(1)$, 
the MHLE of $v$ can be asymptotically the BUP as $m\to\infty$.
In Section \ref{app:details} of the Appendix, we show that for given $\theta$,
the MHLE of $u$ is the BUP, $\widetilde{u} = \exp(\widetilde{v}) = \textrm{E}(u|\mathbf{y})$,
but the GCRLB is not tight.
The MHLE of $\xi=1/\theta=\textrm{E}(u)$ 
can still be an asymptotically
best unbiased estimator,
though it is not consistent as $m\to\infty$.
\hfill \qed

\subsection{Limitation of MHLEs}
\label{sec:small_both}
We have studied the cases where either $m$ or $n$ is small.
As \citet{efron98} remarked for ML procedures in small samples,
we show that MHL procedures also provide quite reliable estimators
for fixed and random parameters,
coming close to the ideal optimum even when either $m$ or $n$ is small.
However, when both $m$ and $n$ are small,
optimal properties of MHLEs may not hold.
For example, when $N=m=n=1$, Example 4 becomes 
\citeauthor{bayarri88}'s \citeyearpar{bayarri88} example
with $\widehat{\theta}=y$ and $\widehat{v}=-\log y$. 
Here, the h-likelihood is an extended likelihood,
exploiting the full data evidence \citep{bjornstad96},
but point estimation is still challenging
because there is a single observation with two parameters:
fixed $\theta$ and random $u$.
When $m=1$, 
$\textrm{E}(\widehat{\theta})$,  
$\textrm{Var}(\widehat{\theta})$, 
$\textrm{E}(\widehat{\xi})$,  
$\textrm{Var}(\widehat{\xi})$, 
$\textrm{Var}(\widehat{u}-\widetilde{u})$
and $\textrm{Var}(\widehat{u}-u)$ are all infinite.
Thus, the resulting MHLEs cannot provide meaningful point estimation and prediction,
and asymptotic normality cannot be applied to the MHLEs.

\section{Interval estimation and prediction}
\label{sec:finite} 

We have studied the asymptotic properties of MHLEs in Section \ref{sec:asymptotic}.
Likelihood procedures have been criticized 
for providing exact inferences only asymptotically. 
Examples 3 and 4 have been developed to show 
the difficulty in likelihood-based inference.
In \citeauthor{bayarri88}'s \citeyearpar{bayarri88} example with $N=m=n=1$,
the ML procedure fails to provide valid inference.
Bayesian procedures provide exact interval estimation in small samples,
but have been criticized for unverifiable prior assumptions.
Recently, there has been renewed interest 
with a modern definition of the fiducial probability \citep{fisher30}, 
called the confidence distribution
(CD; \citeauthor{schweder16}, \citeyear{schweder16}).
However, the current CD has been developed 
for statistical models with fixed unknowns only. 
In this section, we extend the CD
with the goal of providing intervals 
that maintain coverage probabilities
for both fixed and random unknowns in small samples.
Using the two examples, we introduce the h-confidence as a modified extended likelihood,
which can lead to the CD for fixed unknowns and the PD for random unknowns,
providing intervals that maintain coverage probabilities.

\subsection{Confidence interval procedures for fixed unknowns}
\label{sec:CI}

Let $\mathbf{y}=(y_{1},...,y_{n})^{\intercal}$ be a vector of observed values of
random variable $\textit{\textbf{Y}}=(Y_{1},...,Y_{n})^{\intercal}$ 
and $\theta_{0}$ represent the true value of a fixed unknown $\theta$. Suppose
that there exists a sufficient statistic $T=T(\textit{\textbf{Y}})$
for a statistical model $f_{\theta}(\mathbf{y})$\ with its observed value
denoted by $t=T(\mathbf{y})$. 
The confidence density for fixed unknowns 
can be derived from the right side p-value function: 
\begin{equation*}
c(\theta;\mathbf{y})
=\nabla_{\theta} P_{\theta}(T\geq t).
\end{equation*}
Here, the right side p-value 
$P_{\theta}(T\geq t)\sim \textrm{Uniform}(0,1)$ is a pivotal quantity,
whose distribution is independent of $\theta$.
Let $\textrm{CI}(\cdot)$ be a function that generates a confidence interval (CI)
for fixed unknown $\theta$. The current definition of CD is required to
satisfy the confidence feature \citep{schweder16,pawitan23}: under
appropriate conditions, for any true value of $\theta_{0}\in \Theta $, 
\begin{equation} 
P_{\theta_{0}}(\theta_{0}\in \textrm{CI}(\textit{\textbf{Y}}))
=C(\theta_{0}\in \textrm{CI}(\mathbf{y})),  
\label{eq:confidence_feature}
\end{equation}
where the LHS is the frequentist coverage probability 
of the CI procedure $\textrm{CI}(\textit{\textbf{Y}})$ 
and the RHS is the confidence of an observed interval $\textrm{CI}(\mathbf{y})$, 
defined by 
\begin{equation*}
C(\theta_{0}\in \textrm{CI}(\mathbf{y}))
=\int_{\textrm{CI}(\mathbf{y})}c(\theta;\mathbf{y})d\theta. 
\end{equation*}

The Bayesian credible interval $\textrm{BI}(\mathbf{y})$ 
from the posterior $\pi (\theta |\mathbf{y})$ 
is a fixed interval for the random parameter $\theta$ under a prior $\pi(\theta)$, 
allowing the probability statement,
\begin{equation*}
P(\theta \in \textrm{BI}(\mathbf{y})|\mathbf{y})
=\int_{\textrm{BI}(\mathbf{y})}\pi (\theta |\mathbf{y})d\theta
=1-\alpha,
\end{equation*}
for a predetermined level $1-\alpha$. 
The frequentist CI procedure $\textrm{CI}(\textit{\textbf{Y}})$ 
is a random interval for the fixed parameter $\theta=\theta_{0}$ with 
the coverage probability, allowing the probability statement,
\begin{equation*}
P_{\theta_{0}}(\theta_{0}\in \textrm{CI}(\textit{\textbf{Y}}))=1-\alpha, 
\end{equation*}
whereas the observed interval $\textrm{CI}(\mathbf{y})$ 
is a fixed interval for the fixed parameter $\theta=\theta_{0}$ 
with the confidence,
\begin{equation*}
C(\theta_{0}\in \textrm{CI}(\mathbf{y}))=\int_{\textrm{CI}(\mathbf{y})}c(\theta
;\mathbf{y})d\theta=1-\alpha. 
\end{equation*}
This confidence is clearly distinct from probability.
For an observed $\mathbf{y}$, the probability 
$P_{\theta_{0}}(\theta_{0}\in \textrm{CI}(\mathbf{y}))
=I(\theta_{0}\in \textrm{CI}(\mathbf{y}))$ is either 0 or 1, 
but remains unknown since $\theta_{0}$ is unknown. 
The Bayesian school allows the probability statement
for an observed interval,
whereas the frequentist school does not allow it
for an observed interval but for a CI procedure,
namely coverage probability \citep{pawitan23}.
Thus, this paper proposes the use of a confidence statement for observed intervals.

\citet{fisher35} showed that the fiducial argument 
leads to the t-interval for small samples,
but it encountered prolonged controversies
\citep{bartlett36, bartlett39, bartlett65, fisher41, fisher54, pedersen78},
as the marginalization yields incorrect confidence levels in multi-parameter cases.
\citet{pawitan21} showed that confidence is not a probability but an extended likelihood
(see Section \ref{app:confidence} of the Appendix),
which is key to avoiding probability-related paradoxes \citep{pawitan17,pawitan24}.
Confidence is neither the likelihood;
likelihood and extended likelihood are distinct concepts \citep{pawitan22}.
As \citet{schweder16} noted, CDs could be analogous to Bayesian posteriors, 
\begin{equation}
\label{eq:cd_fixed}
c(\theta;\mathbf{y})
= c_{0}(\theta;\mathbf{y})\ L(\theta;\mathbf{y})
\end{equation}
where $c_{0}(\theta;\mathbf{y})$ is an implied prior for $\theta$
\citep{pawitan23, lee24satellite}.
In single parameter cases,
a posterior under Jeffreys prior is asymptotically close to the CD \citep{lindley58, welch63}, 
and the same holds for the reference prior \citep{bernardo79}, 
as it is equivalent to Jeffreys prior in this case. 
However, in high-dimensional cases of \citeauthor{stein59}'s \citeyearpar{stein59} problem, 
\citet{lee24satellite} showed that credible intervals 
under the flat prior and the reference prior
cannot maintain the confidence feature \eqref{eq:confidence_feature},
whereas the CD can.
Thus, given that the implied prior could be data-dependent, 
we prefer to interpret the CD as a modified likelihood 
that allows the confidence statement for an observed CI.
\citet{efron98} remarked, `Maybe Fisher's biggest blunder will become a big hit in the 21st century,' 
highlighting the need for further investigation into the properties of confidence.
Following examples illustrate that the CD provides exact interval estimation for fixed parameters,
similar to Bayesian inferences, but in frequentist sense.

\medskip
\noindent Example 3 (continued)
Let $t$ be an observed value of 
a sufficient statistic $T=\sum_{i=1}^{n} Y_{i}$ for $\theta$
with respect to $f_{\theta}(\mathbf{y})$.
Then, for observed data $\mathbf{y}$,
$T\sim \textrm{Gamma}(n,1/\theta)$ leads
to the confidence density of $\theta$,
\begin{equation*}
c(\theta;\mathbf{y})
=\nabla_{\theta} P_{\theta}(T\geq t)
=\frac{t^{n}\exp (-t/\theta)}{\theta^{n+1}\Gamma(n+1)},
\label{eq:Ex3_cd}
\end{equation*}
which is a modified likelihood with a modification term
$
c_{0}(\theta;\mathbf{y})
= c(\theta;\mathbf{y})/L(\theta;\mathbf{y})
\propto \theta^{-1}.
$
This is equivalent to the \citet{jeffreys98} prior. 
This CD yields a $100(1-\alpha)\%$ observed CI for $\theta$, 
\begin{equation*}
\textrm{CI}_{\alpha}(\mathbf{y})
=\left(\theta_{\textrm{lower}},\theta_{\textrm{upper}}\right), 
\end{equation*}
such that 
$\Gamma(n,t/\theta_{\textrm{lower}})=\Gamma(n)\cdot\alpha/2$ 
and $\Gamma(n,t/\theta_{\textrm{upper}})=\Gamma(n)\cdot(1-\alpha/2)$, 
where $\Gamma(\cdot,\cdot)$ denotes the incomplete gamma function. 
This CI maintains the confidence feature \eqref{eq:confidence_feature} 
by satisfying the regularity conditions in \citet{pawitan23}. 
Figure \ref{fig:Ex3_theta_covp} shows 
the actual coverage probabilities of $90\%$ CI 
based on the CD (solid) and the Wald CI (dashed) 
from the simulation study with 10,000 iterations 
for each $\theta>0$ and $n=1,2,5,10$.
The Wald CI relies on asymptotic normality;
it maintains the coverage probability as $n$ grows.
Meanwhile, CI based on the CD is exact, 
maintaining the coverage probability exactly, 
even for $n=1$. 
\hfill \qed

\begin{figure}[tbp]
\centering
\includegraphics[width=\linewidth]{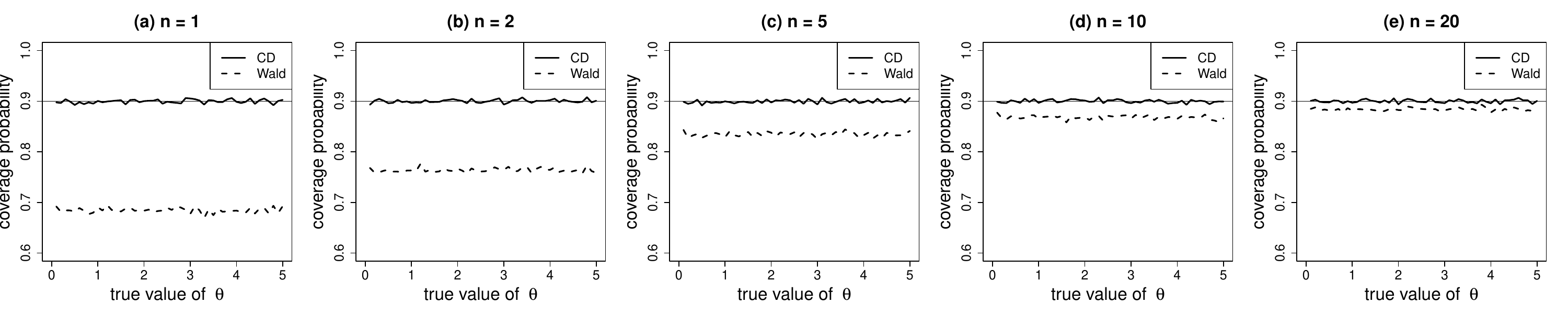}
\caption{Coverage probabilities of 90\% CI for $\theta $ in Example
3 where the true value of $\theta $ varies from 0.01 to 5.}
\label{fig:Ex3_theta_covp}
\includegraphics[width=\linewidth]{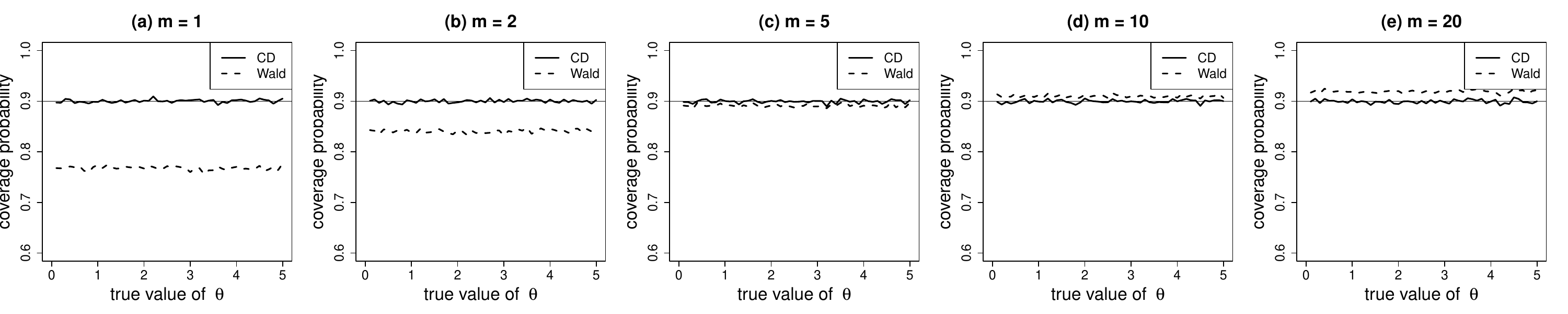}
\caption{Coverage probabilities of 90\% CI for $\theta $ in Example
4 where the true value of $\theta $ varies from 0.01 to 5.}
\label{fig:Ex4_theta_covp}
\end{figure}

\medskip
\noindent Example 4 (continued)
Let $t$ be an observed value of a sufficient statistic $T=\sum_{j=1}^{m}Y_{j}$
for $\theta$ with respect to $f_{\theta}(\mathbf{y})$, 
then the confidence density for $\theta$ is 
\begin{equation*}
c(\theta;\mathbf{y})
=\nabla_{\theta} P_{\theta}(T\geq t)
=\frac{mt^{m}}{(\theta +t)^{m+1}},  
\end{equation*}
leading to the Jeffreys prior as a modification term, 
$
c_{0}(\theta;\mathbf{y}) 
= c(\theta;\mathbf{y})/L(\theta;\mathbf{y})
\propto \theta^{-1}. 
$
This yields a CI for $\theta$,
\begin{equation*}
\textrm{CI}_{\alpha}(\mathbf{y})
=\left( t\cdot \{\left( 1-\alpha/2\right)^{-1/m}-1\},
\ t\cdot \{\left( \alpha/2\right)^{-1/m}-1\}\right).
\end{equation*}
This CI maintains the confidence feature \eqref{eq:confidence_feature} by
satisfying the regularity conditions in \citet{pawitan23}. 
Figure \ref{fig:Ex4_theta_covp} shows the actual coverage probabilities 
of $90\%$ CI based on the CD (solid) and the Wald CI (dashed) 
from the simulation study with 10,000 iterations for $\theta >0$ 
and $m=1,2,5,10$. Here the
coverage probability of the Wald CI does not converge to the
confidence level even though $m$ approaches infinity,
since the MLE of $\theta$ and variance estimator are not consistent. 
Thus, ML procedures for $\theta$ could not
give reasonable interval estimation, even with large $m$.
However, CI based on the CD is exact even for $m=1$. 
\hfill \qed

\subsection{Predictive interval procedures for random unknowns}
\label{sec:PI}

\citet{robinson91} pointed out that
the prediction of random unknown can be made for 
either the realized value (conditional prediction) 
or the future value (marginal prediction) of random unknown. 
For example, in animal breeding, the breeding value of an already-born animal 
is the realized value of a random unknown,
whereas the breeding value of a mating between two potential parents 
is the future value of a random unknown. 
To emphasize the distinction, 
we denote the realized value of a random unknown by $\mathbf{v}_{0}$ 
and the future value of a random unknown by $\textit{\textbf{V}}$,
while $\mathbf{v}$ may represent either $\mathbf{v}_{0}$ or $\textit{\textbf{V}}$. 
It is worth noting that the realized values $\mathbf{v}_{0}$ are treated 
as if they were fixed unknowns. 
For realized values, 
$\widehat{\mathbf{v}}_{0}-\mathbf{v}_{0}$ can be asymptotically normal, 
hence the Wald interval maintains the coverage probability as $m\to\infty$.
For future values with $m=0$, 
unless $\textit{\textbf{V}}$ is normal,
$\widehat{\textit{\textbf{V}}}-\textit{\textbf{V}}$ would not be normal,
regardless of $n\to\infty$.
Thus, the Wald interval cannot maintain the coverage probability.
\citet{pearson20} pointed out a limitation of Fisher's plug-in method, 
using $f_{\widehat{\boldsymbol{\theta}}}(\mathbf{v}|\mathbf{y})$
as the PD for future random variables $\mathbf{v}$,
proportional to the extended likelihood $L_{e}(\widehat{\boldsymbol{\theta}}, \mathbf{v})$
of \citet{bjornstad96} for prediction of $\mathbf{v}$.
It fails to account for the information loss 
from estimating the fixed parameters $\boldsymbol{\theta}$.
It tends to be misleadingly precise \citep{aitchison75},
especially pronounced with high dimensional $\boldsymbol{\theta}$ \citep{Kalbfleisch70}.

The marginal PI represents 
a fixed interval for a random (future) unknown $\textit{\textbf{V}}$, 
similar to the Bayesian credible interval, 
whereas the conditional PI is a CI that
represents a fixed interval for a fixed (realized) unknown $\mathbf{v}_{0}$.
Thus, different schemes are required for simulation studies; 
a new random effect is generated from replication to replication 
to compute the marginal coverage probability 
$P_{\theta_{0}}(\textit{\textbf{V}}\in 
\textrm{PI}(\textit{\textbf{Y}}))$ 
and a single random effect is generated throughout the replications 
to compute the conditional coverage probability 
$P_{\theta_{0}}(\mathbf{v}_{0}\in
\textrm{PI}(\textit{\textbf{Y}})|\textit{\textbf{V}}=\mathbf{v}_{0})$ 
for the realized value $\mathbf{v}_{0}$.
Examples 3 and 4 below show the existence of such PI procedures by using pivotal quantities.

\medskip
\noindent Example 3 (continued)
In this example, $S=U/T$ is a pivotal quantity
whose distribution is independent of $\theta$.
Analogous to pivotal method for fixed unknowns \citep{schweder16},
replacing the random variable $T$ 
with an observed value $t$ leads to a pseudo-density function for $u$,
\begin{equation*}
f^*(u;t) 
= f(s) \cdot \left|\frac{ds}{du}\right|
= f\left(\frac{u}{t}\right) \cdot \frac{1}{t} 
= n t^n (t+u)^{-(n+1)},
\end{equation*}
where $s=u/t$.
It produces the PI for future value $U$ with coverage probability $1-\alpha$,
\begin{equation*}
\textrm{PI}_{\alpha}(t)
=(u_{\textrm{lower}},u_{\textrm{upper}})
=\left( 
t\cdot \{(1-\alpha/2)^{-1/n}-1\}, \ 
t\cdot \{(\alpha/2)^{-1/n}-1\}
\right),
\end{equation*}
Then, for any $\theta_0 \in \Theta$, 
\begin{align*}
P_{\theta_{0}}(U\in \textrm{PI}_{\alpha}(T))
= \int_{0}^{\infty} \int_{u_{\textrm{lower}}}^{u_{\textrm{upper}}}
f_{\theta}(t,u) \ du \ dt
= 1-\alpha.
\end{align*}
Figure \ref{fig:Ex3_u_covp} illustrates 
the actual coverage probabilities of $90\%$ PI for future value $U$
based on the pivot (solid), plug-in method (dotted) and Wald PI (dashed)
from the simulation study with 10,000 iterations for each $\theta>0$. 
The Wald PI cannot maintain the coverage probability
even for large $n=20$, 
since $m=0$ causes inconsistency of $\widehat{U}=\exp(\widehat{V})$ 
and violation of the asymptotic normality. 
Plug-in method gives reasonable PIs when $n$ is large,
but cannot maintain the coverage probability when $n$ is small.
However, the PI based on the pivot is exact even for $n=1$.
\hfill \qed

\begin{figure}[tbp]
\centering
\includegraphics[width=\linewidth]{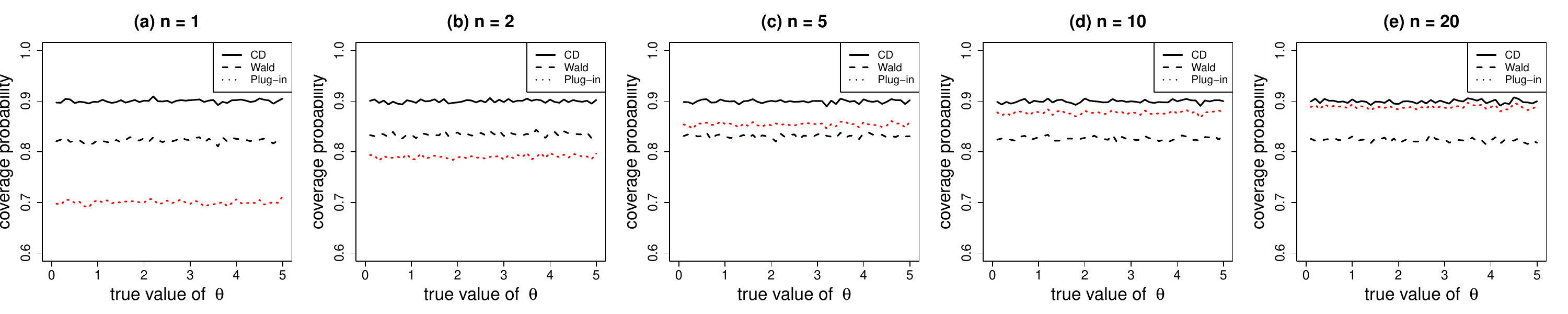} 
\caption{Marginal coverage probabilities of 90\% PI for future $u$ in Example 3,
based on pivotal quantities (solid), plug-in method (dotted) and Wald PI (dashed),
where the true value of $\theta $ varies.}
\label{fig:Ex3_u_covp}
\includegraphics[width=\linewidth]{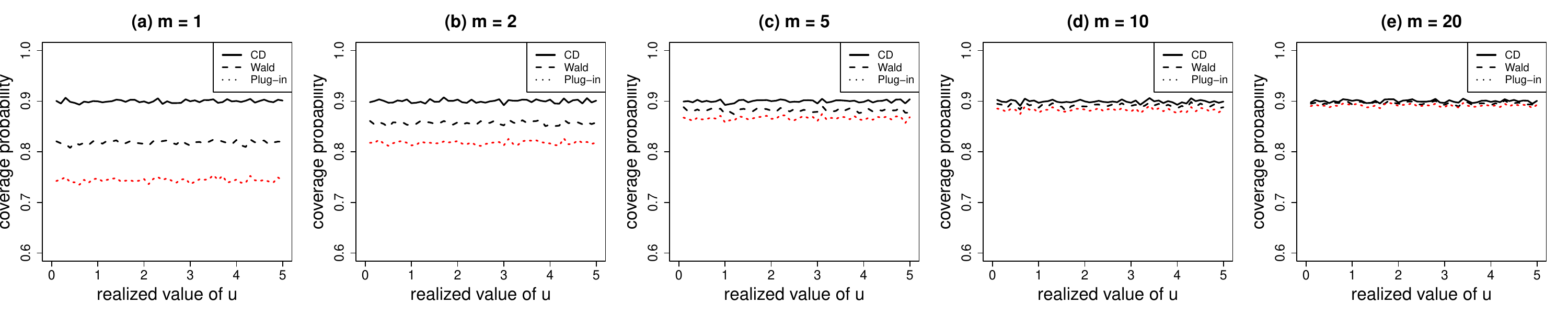}
\caption{Conditional coverage probabilities of 90\% PI for realized $u$ in Example 4, 
based on pivotal quantities (solid), plug-in method (dotted) and Wald PI (dashed),
where $\theta=1$ and realized value of $u$ varies.}
\label{fig:Ex4_u_covp}
\end{figure}

\medskip
\noindent Example 4 (continued)
In this example, $S=TU$ is a pivotal quantity
whose distribution is independent of $\theta$.
Replacing the random variable $T$ 
with an observed value $t$ leads to a pseudo-density function for $u$,
\begin{equation*}
f^*(u;t)
= f(s) \cdot \left|\frac{ds}{du}\right|
= f(tu) \cdot t
= \frac{t^{m}}{\Gamma(m)}u^{m-1}e^{-tu},
\end{equation*}
where $s=tu$.
It produces the PI for the realized value $u_{0}$ 
with coverage probability $1-\alpha$,
\begin{equation*}
\textrm{PI}_{\alpha}(t)=(u_{\textrm{lower}},u_{\textrm{upper}}), 
\end{equation*}
such that $\Gamma(m,t\cdot u_{\textrm{lower}})/\Gamma(m)=1-\alpha/2$
and $\Gamma(m,t\cdot u_{\textrm{upper}})/\Gamma(m)=\alpha/2$.
Let $(t_1,t_2)$ be the solutions of
$\Gamma(m,t_1 \cdot u_{0})/\Gamma(m)=1-\alpha/2$ and
$\Gamma(m,t_2 \cdot u_{0})/\Gamma(m)=\alpha/2$, respectively,
which leads to 
$u_{0} \in \textrm{PI}_{\alpha}(T) 
\Leftrightarrow T \in (t_1, t_2)$.
Then, for any $\theta_0 \in \Theta$, 
the coverage probability is
\begin{align*}
P_{\theta_{0}}(u_{0}\in \textrm{PI}_{\alpha}(T)|U=u_{0})
=\int_{t_1}^{t_2} f(t|u_{0}) \ dt
= 1 - \alpha.
\end{align*}
Figure \ref{fig:Ex4_u_covp} illustrates the actual coverage probabilities of 
$90\%$ PI for the realized value $u_{0}>0$ 
based on the pivot (solid), plug-in method (dotted) and Wald PI (dashed) 
from the simulation study with 10,000 iterations 
for each $u_{0}$, when $\theta=1$. 
The Wald PI and the PI based on plug-in method 
can maintain the coverage probability only for large $m$, 
whereas the PI based on the pivot is exact even for $m=1$.
\hfill \qed

\subsection{The h-confidence}
\label{sec:h-confidence}

This section proposes the h-confidence as a modified extended likelihood, 
whose marginal distributions become CD and PD.
LP and ELP imply that 
the PD should be based on the predictive likelihood 
$L_p(\mathbf{v};\boldsymbol{\theta},\mathbf{y}) = f_{\boldsymbol{\theta}}(\mathbf{v}|\mathbf{y})$.
The key is how to eliminate the nuisance parameters $\boldsymbol{\theta}$ 
from $f_{\boldsymbol{\theta}}(\mathbf{v}|\mathbf{y})$ to obtain a proper PD.
\citet{butler86} proposed a conditioning approach,
similar to \citet{hinkley77} to obtain a PD.
\citet{pawitan23} showed that the confidence with full data evidence
can be achieved by conditioning the maximal ancillary statistics,
but it could be intractable since the conditional distribution is often hard to derive.
\citet{pearson20} proposed the use of marginal posterior for $\mathbf{v}$, 
\begin{equation}
\label{eq:marginal_posterior}
\pi(\mathbf{v}|\mathbf{y})
=\int_{\boldsymbol{\Theta}} \pi (\boldsymbol{\theta}|\mathbf{y})
f_{\boldsymbol{\theta}}(\mathbf{v}|\mathbf{y}) d\boldsymbol{\theta}, 
\end{equation}
where $\pi (\boldsymbol{\theta}|\mathbf{y})$ denotes 
the marginal posterior of $\boldsymbol{\theta}$.
\citet{lee02,lee09} used the normalized profile h-likelihood as a PD,
proportional to $L_p(\mathbf{v}; \widetilde{\boldsymbol{\theta}}(\mathbf{v}), \mathbf{y})$
where $\widetilde{\boldsymbol{\theta}}(\mathbf{v})=\argmax_{\boldsymbol{\theta}}L_{e}(\boldsymbol{\theta},\mathbf{v})$.

This section defines the PD 
by extending the confidence feature \eqref{eq:confidence_feature}
for fixed unknowns to accommodate random unknowns.
Let $c(\mathbf{v};\mathbf{y})$ be the density function of the PD for $\mathbf{v}$
and define the confidence of an observed predictive interval (PI) as 
\begin{equation*}
C(\mathbf{v}\in \textrm{PI}(\mathbf{y}))
=\int_{\textrm{PI}(\mathbf{y})}c(\mathbf{v};\mathbf{y})d\mathbf{v},
\end{equation*}
so that the $100(1-\alpha)\%$ observed PI for $\mathbf{v}$ is defined to satisfy
$C(\mathbf{v}\in \textrm{PI}(\mathbf{y}))=1-\alpha$.
In this paper, confidence features for random unknowns indicate that
for any $\theta_{0}\in \Theta$, 
\begin{align*}
&P_{\theta_{0}}
(\textit{\textbf{V}}\in \textrm{PI}(\textit{\textbf{Y}}))
=C(\textit{\textbf{V}}\in \textrm{PI}(\mathbf{y}))
=1-\alpha, \\
&P_{\theta_{0}}
(\mathbf{v}_{0}\in \textrm{PI}(\textit{\textbf{Y}})
|\textit{\textbf{V}}=\mathbf{v}_{0})
=C(\mathbf{v}_{0}\in \textrm{PI}(\mathbf{y}))
=1-\alpha,
\end{align*}
for marginal PIs and conditional PIs, respectively.
This allows frequentist interpretation of PIs for random unknowns.
Performance of PIs are investigated in a frequentist perspective 
that the predetermined confidence level, 
either $C(\textit{\textbf{V}}\in \textrm{PI}(\mathbf{y}))$ 
or $C(\mathbf{v}_{0}\in \textrm{PI}(\mathbf{y}))$, 
becomes identical to the actual coverage probability, 
either $P_{\theta_{0}}(\textit{\textbf{V}}\in 
\textrm{PI}(\textit{\textbf{Y}}))$ 
or $P_{\theta_{0}}(\mathbf{v}_{0}\in 
\textrm{PI}(\textit{\textbf{Y}})|\textit{\textbf{V}}=\mathbf{v}_{0})$,
respectively. 

We now introduce an h-confidence,
extending the concept of CD \eqref{eq:cd_fixed}
as a modified extended likelihood
to accommodate additional random unknowns:
\begin{equation}
\label{eq:h-confidence}
c_{h}(\boldsymbol{\theta}, \mathbf{v}; \mathbf{y})
= c_{0}(\boldsymbol{\theta}, \mathbf{v}; \mathbf{y}) L_{e}(\boldsymbol{\theta}, \mathbf{v}; \mathbf{y}),
\end{equation}
where $L_{e}(\boldsymbol{\theta}, \mathbf{v}; \mathbf{y})$ is an extended likelihood
and $c_{0}(\boldsymbol{\theta}, \mathbf{v}; \mathbf{y})$ is a modification term,
such that marginalization of $c_h(\boldsymbol{\theta}, \mathbf{v}; \mathbf{y})$ leads to CD and PD,
$$
c(\boldsymbol{\theta};\mathbf{y}) = \int_{\Omega_{\mathbf{v}}} c_{h}(\boldsymbol{\theta}, \mathbf{v}; \mathbf{y}) d \mathbf{v}
\quad \textrm{and} \quad
c(\mathbf{v};\mathbf{y}) = \int_{\boldsymbol{\Theta}} c_{h}(\boldsymbol{\theta}, \mathbf{v}; \mathbf{y}) d \boldsymbol{\theta},
$$
respectively, satisfying the confidence features.
As the confidence from CD is interpreted as an extended likelihood \citep{pawitan21}, 
the confidence from PD can similarly be understood (see Section \ref{app:confidence} of the Appendix).

\subsubsection{Future random unknowns}

Let $t=T(\mathbf{y})\in\Omega$ be a sufficient statistic 
for the fixed unknown $\theta$
with respect to the statistical model $f_{\theta}(\mathbf{y})$
and $U=Y_{n+1}$ be a future random unknown from the density $f_{\theta}(u)$.
Suppose that there exists a monotone one-to-one transformation $\eta = \eta (\theta)$
from the parameter space $\Theta$ to $\Omega$,
such that for any $t \in \Omega$,
\begin{equation*}
F_{n} (t; \eta) = F_{n} (\eta; t),
\end{equation*}
where $F_{n}(\cdot \ ; \eta)$ is a cumulative distribution function of
$T$ with a parameter $\eta$ and sample size $n$.
Here, we can define the h-confidence \eqref{eq:h-confidence} as
\begin{equation}
\label{eq:hd_extendable}
c_{h}(\theta, u; \mathbf{y}) 
= \left\{\nabla_{\theta} P_{\theta}(T\geq t)\right\}
f_{\theta}(u|\mathbf{y})
= c(\theta; \mathbf{y}) f_{\theta}(u|\mathbf{y})
= c_{0}(\theta; \mathbf{y}) L_{e}(\theta, u; \mathbf{y}),
\end{equation}
where 
$c(\theta; \mathbf{y})=|\nabla_{\theta} P_{\theta}(T\geq t)|$
and the modification term $c_{0}(\theta;\mathbf{y})=c(\theta;\mathbf{y})/L(\theta;\mathbf{y})$.
Then, the marginal CD for $\theta$ becomes $c(\theta;\mathbf{y})$
and the marginal PD for $u$ becomes
$$
c(u; \mathbf{y}) = \int_{\Theta} c_{h}(\theta, u; \mathbf{y}) d\theta
= \int_{\Theta} c(\theta, \mathbf{y}) f_{\theta} (u|\mathbf{y}) d\theta.
$$
Let $\textrm{CI}_{\alpha}(t) \subseteq \Theta$ 
and $\textrm{PI}_{\alpha}(t) \subseteq \Omega_{u}$ 
be observed intervals of $\theta$ and $u$ such that
\begin{align*}
\int_{\textrm{CI}_{\alpha}(t)}
c(\theta;\mathbf{y}) d\theta
= 1-\alpha
\quad \textrm{and} \quad
\int_{\textrm{PI}_{\alpha}(t)}
c(u;\mathbf{y}) du 
= 1-\alpha,
\end{align*}
respectively.
The following theorem shows that
coverage probabilities of corresponding interval procedures
$\textrm{CI}_{\alpha}(T)$ and $\textrm{PI}_{\alpha}(T)$
are both exactly $1-\alpha$.

\begin{theorem}
\label{thm:future}
Let $q_{\alpha}(t)$ be the $\alpha$-quantile of $c(u;t)$.
If $F_{1}(q_{\alpha}(t); \eta) = F_{1}(t; q_{\alpha}(\eta))$,
where $F_{1}(\cdot \ ; \eta)$ is a cumulative distribution function
of sufficient statistic for future value $U=Y_{n+1}$.
Then for any true $\theta_0\in\Theta$,
\begin{align*}
P_{\theta_0}(\theta_0 \in \textrm{CI}_{\alpha}(T)) 
= 1-\alpha
\quad \text{and} \quad
P_{\theta_0}(U \in \textrm{PI}_{\alpha}(T)) 
= 1-\alpha.
\end{align*}
\end{theorem}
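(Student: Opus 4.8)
The plan is to prove the two coverage identities separately, each by reducing it to the probability integral transform; since the reparametrization $\eta=\eta(\theta)$ is one-to-one, the confidence density and the predictive density transform as ordinary densities and coverage statements are invariant, so I would work throughout in the $\eta$-coordinate. The confidence density is $c(\eta;\mathbf{y})=|\nabla_{\eta}P_{\eta}(T\ge t)|$, so its distribution function is $C(\eta;t)=F_{n}(t;\eta)$, which by the hypothesis equals $F_{n}(\eta;t)$ and is therefore a genuine cumulative distribution function in $\eta$. This symmetry does the real work: it identifies the confidence distribution ``given data $t$'' with the sampling law of $T$ at parameter value $t$, i.e. a draw $H\sim C(\cdot;t)$ has the same distribution as $T'\sim F_{n}(\cdot;t)$. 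Taking $\textrm{CI}_{\alpha}(t)$ to be bounded by CD-quantiles (the equal-tailed $\alpha/2$--$(1-\alpha/2)$ interval; the one-sided cases are identical), the event $\theta_{0}\in\textrm{CI}_{\alpha}(T)$ is equivalent to $\alpha/2<C(\eta_{0};T)<1-\alpha/2$; since $C(\eta_{0};T)=F_{n}(T;\eta_{0})$ is uniform on $(0,1)$ by continuity of $F_{n}(\cdot;\eta_{0})$, the coverage is exactly $1-\alpha$.

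For the predictive interval the core claim is $P_{\theta_{0}}(U\le q_{\gamma}(T))=\gamma$ for every $\gamma\in(0,1)$; the nominal $1-\alpha$ coverage of a PI bounded by PD-quantiles then follows by subtraction. First I would record the distribution function of the PD: from $c(u;\mathbf{y})=\int c(\eta;\mathbf{y})\,f_{\eta}(u\mid\mathbf{y})\,d\eta$ and $f_{\eta}(u\mid\mathbf{y})=f_{\eta}(u)$ (as $U=Y_{n+1}$ is a future, hence independent, observation), integration in $u$ gives $\Psi(u;t):=\int_{-\infty}^{u}c(u';t)\,du'=\int c(\eta;t)\,F_{1}(u;\eta)\,d\eta=\textrm{E}_{H\sim C(\cdot;t)}[F_{1}(u;H)]$. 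Evaluating at ``data $=\eta_{0}$'' and using the identification above, $\Psi(u;\eta_{0})=\textrm{E}_{T'\sim F_{n}(\cdot;\eta_{0})}[F_{1}(u;T')]$, so the PD-quantile $q_{\gamma}(\eta_{0})$ satisfies $\textrm{E}_{T'\sim F_{n}(\cdot;\eta_{0})}[F_{1}(q_{\gamma}(\eta_{0});T')]=\gamma$. On the other hand, independence of $U$ and $T$ gives $P_{\theta_{0}}(U\le q_{\gamma}(T))=\textrm{E}_{T\sim F_{n}(\cdot;\eta_{0})}[F_{1}(q_{\gamma}(T);\eta_{0})]$, and the hypothesis $F_{1}(q_{\gamma}(t);\eta)=F_{1}(t;q_{\gamma}(\eta))$ at $\eta=\eta_{0}$ rewrites the integrand as $F_{1}(T;q_{\gamma}(\eta_{0}))$. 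It then remains to interchange the ``value'' and ``parameter'' slots of $F_{1}$, passing to $\textrm{E}_{T\sim F_{n}(\cdot;\eta_{0})}[F_{1}(q_{\gamma}(\eta_{0});T)]$, which after renaming the integration variable is exactly the characterization of $q_{\gamma}(\eta_{0})$ displayed above and hence equals $\gamma$; this step is licensed by the reflexivity $F_{1}(a;b)=F_{1}(b;a)$, the sample-size-one instance of the same structural identity (automatic in the models at hand, where $F_{1}$ and $F_{n}$ are CDFs of product- or location-type pivots). Combining, $P_{\theta_{0}}(U\le q_{\gamma}(T))=\gamma$, whence taking $\gamma\in\{\alpha/2,\,1-\alpha/2\}$ gives $P_{\theta_{0}}(U\in\textrm{PI}_{\alpha}(T))=1-\alpha$.

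The step I expect to be the main obstacle, and where the hypotheses pull their weight, is precisely this swap of the argument and the parameter of $F_{1}$ inside the $F_{n}(\cdot;\eta_{0})$-expectation: it forces the reflexive structure $F_{m}(a;b)=F_{m}(b;a)$ to persist from sample size $n$ to sample size one (equivalently, that the pivot behind $T$ and the pivot behind $U$ are compatible), which is the content hiding in the clause relating $q_{\alpha}$ and $F_{1}$. The remaining ingredients are routine regularity, to be imported from the conditions cited in the statement, e.g. those of \citet{pawitan23}: continuity and strict monotonicity of $F_{n}(\cdot;\eta_{0})$, $F_{1}(\cdot;\eta_{0})$, $C(\cdot;t)$ and $\Psi(\cdot;t)$, so that the quantile inverses $q_{\gamma}$ and the CD-quantiles are single-valued and ``$U\le q_{\gamma}(T)$'' is genuinely equivalent to ``$\Psi(U;T)\le\gamma$''; validity of differentiation under the integral when forming $c(\eta;\mathbf{y})$ and $c(u;\mathbf{y})$; and the independence $f_{\theta}(u\mid\mathbf{y})=f_{\theta}(u)$ for the future observation.
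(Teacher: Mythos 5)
Your argument is correct and follows essentially the same route as the paper's proof: identify the CD with the sampling density $f_{n}(\cdot\,;t)$ via the symmetry $F_{n}(t;\eta)=F_{n}(\eta;t)$, write the PD's distribution function as the mixture $\int F_{1}(u;\eta)\,c(\eta;t)\,d\eta$, evaluate the quantile identity at a data value equal to the parameter (using that $T$ and $\eta$ share the support $\Omega$), and compute the coverage $\int F_{1}(q_{\alpha}(t);\eta_{0})\,f_{n}(t;\eta_{0})\,dt$ by independence/Fubini together with the hypothesis relating $q_{\alpha}$ and $F_{1}$. The only real differences are that you prove the CI half explicitly via the probability integral transform where the paper simply invokes the confidence feature of the CD, and that you make explicit the slot-swap $F_{1}(a;b)=F_{1}(b;a)$ needed in the final step, which the paper uses silently when it equates the coverage integral with the relabeled quantile identity $\int F_{1}(q_{\alpha}(\eta);t)\,f_{n}(t;\eta)\,dt=1-\alpha$.
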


\medskip
\noindent Example 3 (continued)
Since $Y_i \sim \textrm{Exp}(1/\theta)$
and $T \sim \textrm{Gamma}(n,1/\theta)$,
$\eta = 1/\theta$ satisfies
$F_{n} (t; \eta) 
= 1 - \Gamma(n, t \eta)/\Gamma(n)
= F_{n} (\eta; t).
$
With the modification term
$c_{0}(\theta;\mathbf{y}) \propto 1/\theta$,
the h-confidence \eqref{eq:h-confidence} 
can be defined as \eqref{eq:hd_extendable},
$$
c(\theta,u;\mathbf{y}) 
= \frac{t^{n}\exp \{-(t+u)/\theta\}}{\theta^{n+2}\Gamma(n+1)}
\propto \theta^{-1} L_{e}(\theta,u;\mathbf{y}).
$$
This leads to the CD and PD,
$$
c(\theta;\mathbf{y}) = \frac{t^{n}\exp (-t/\theta)}{\theta^{n+1}\Gamma(n+1)}
\quad \textrm{and} \quad
c(u;\mathbf{y}) = \frac{nt^{n}}{(t+u)^{n+1}},
$$
which satisfy the confidence features,
as $c(\theta;\mathbf{y})$ in Section \ref{sec:CI} 
and $c(u;\mathbf{y})=f^{*}(u;t)$ in Section \ref{sec:PI}, respectively.
Since the quantile function $q_{\alpha}(t)=t\cdot (\alpha^{-1/n}-1)$ leads to
\begin{align*}
F_{1} (t\cdot (\alpha^{-1/n}-1); \eta) 
= 1 - \Gamma(1, t\eta \cdot (\alpha^{-1/n}-1))
= F_{1} (t; \eta\cdot (\alpha^{-1/n}-1)),
\end{align*}
Theorem \ref{thm:future} implies
that the h-confidence \eqref{eq:hd_extendable} gives 
intervals for fixed and random unknowns, maintaining confidence features.
\hfill \qed

\subsubsection{Realized random unknowns}
Let $t=T(\mathbf{y})$ be a sufficient statistic 
for a realized value of random unknown $u=u_0$
with respect to the statistical model $f(\mathbf{y}|u)$,
where the random unknown $u$ is generated from $f_{\theta}(u)$.
Analogous to the regularity condition in \citet{pawitan23},
suppose that for any $\alpha \in (0,1)$,
the $\alpha$-quantiles of $T|u$ and $U$
are strictly increasing function of $u$ and $\theta$, respectively.
Then, we can define
$$
c(u;\mathbf{y}) =\nabla_{u} P(T\geq t|u)
\quad \textrm{and} \quad
c'(\theta;u) = \nabla_{\theta} P_{\theta}(U \geq u),
$$
respectively. 
It is worth noting that $c(u;\mathbf{y})$ is a proper PD for $u$,
but $c'(\theta;u)$ is not a proper CD 
as $u$ is a realized value of an unobserved the random variable.
Let $c_{0}(u;\mathbf{y}) = {c(u;\mathbf{y})}/{f(\mathbf{y}|u)}$
and $c'_{0}(\theta;u) = {c'(\theta;u)}/{f_{\theta}(u)}$,
then we have an h-confidence \eqref{eq:h-confidence},
$$
c_h(\theta,u;\mathbf{y}) = c_{0}(\theta, u; \mathbf{y}) L_{e}(\theta, u; \mathbf{y}),
$$
where $c_{0}(\theta, u; \mathbf{y}) = c'_{0}(\theta;u) c_{0}(u;\mathbf{y})$.
The marginal PD for $u$ is $c(u;\mathbf{y})$ above
and the marginal CD for $\theta$ is
\begin{align*}
c(\theta;\mathbf{y}) = \int_{\Omega_u} c_h(\theta, u; \mathbf{y}) du.
\end{align*}
Let $\textrm{CI}_{\alpha}(t) \subseteq \Theta$ 
and $\textrm{PI}_{\alpha}(t) \subseteq \Omega_{u}$ 
be observed intervals for $\theta$ and $u$ such that
\begin{align*}
\int_{\textrm{CI}_{\alpha}(t)} c(\theta;\mathbf{y}) d\theta = 1-\alpha
\quad \textrm{and} \quad
\int_{\textrm{PI}_{\alpha}(t)} c(u;\mathbf{y}) du = 1-\alpha,
\end{align*}
respectively.
The following theorem shows that
coverage probabilities of the corresponding interval procedures,
$\textrm{CI}_{\alpha}(T)$ and $\textrm{PI}_{\alpha}(T)$,
are both exactly $1-\alpha$.

\begin{theorem}
\label{thm:realized}
For any true value $\theta_0 \in \Theta$ 
and for any realized value $u_0 \in \Omega_{u}$,
\begin{align*}
P_{\theta_0}(\theta_0 \in \textrm{CI}_{\alpha}(T)) 
= 1-\alpha
\quad \text{and} \quad
P_{\theta_0}(u_0 \in \textrm{PI}_{\alpha}(T)|u_0) 
= 1-\alpha.
\end{align*}
\end{theorem}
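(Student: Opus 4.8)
\emph{Proof strategy.}
The plan is to first collapse the h-confidence to a product of two one-dimensional densities and then prove each coverage statement by a probability-integral-transform (PIT) argument. Substituting $L_{e}(\theta,u;\mathbf{y})=f(\mathbf{y}|u)f_{\theta}(u)$ together with $c_{0}(u;\mathbf{y})=c(u;\mathbf{y})/f(\mathbf{y}|u)$ and $c'_{0}(\theta;u)=c'(\theta;u)/f_{\theta}(u)$ into \eqref{eq:h-confidence}, the factors $f(\mathbf{y}|u)$ and $f_{\theta}(u)$ cancel and leave $c_{h}(\theta,u;\mathbf{y})=c(u;\mathbf{y})\,c'(\theta;u)$. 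The monotonicity regularity conditions make $c(u;\mathbf{y})$ a genuine density in $u$, whose distribution function $H(u;t)$ is a monotone bijection of $\Omega_{u}$ onto $(0,1)$ arising as a primitive of the tail probability $P(T\ge t\mid u)$, and make $c'(\theta;u)$ a genuine density in $\theta$ with primitive $P_{\theta}(U\ge u)$; integrating the product in $\theta$ recovers $c(u;\mathbf{y})$, and integrating in $u$ and then in $\theta$ gives total mass one, so the stated marginals $c(u;\mathbf{y})$ and $c(\theta;\mathbf{y})=\int_{\Omega_{u}}c_{h}(\theta,u;\mathbf{y})\,du$ are proper.

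For the predictive interval I would take the (equal-tailed) construction with $H(u_{\textrm{lower}}(t);t)=\alpha/2$ and $H(u_{\textrm{upper}}(t);t)=1-\alpha/2$, so that, $H(\cdot;t)$ being increasing, $u_{0}\in\textrm{PI}_{\alpha}(T)$ is equivalent to $\alpha/2<H(u_{0};T)<1-\alpha/2$. Since $H(u_{0};T)$ is a fixed monotone transform of $P(T\ge t\mid u_{0})$ evaluated at the random $T$, conditioning on $U=u_{0}$ makes it $\textrm{Uniform}(0,1)$ by the PIT; hence $P_{\theta_{0}}(u_{0}\in\textrm{PI}_{\alpha}(T)\mid u_{0})=1-\alpha$ for every realized $u_{0}$, and in particular for every $\theta_{0}$. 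This half is an immediate PIT once the product decomposition and the monotonicity of $H$ are in hand.

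The confidence-interval half follows the same template, but only after a pivot is extracted from the mixture $c(\theta;\mathbf{y})$, and I expect this to be the main obstacle. Writing $K(\theta;t)=\int_{\Omega_{u}}c(u;t)\,P_{\theta}(U\ge u)\,du$ for the distribution function of the marginal CD, I would integrate by parts in $u$: the boundary terms vanish because $P_{\theta}(U\ge u)\to 0$ at the upper end of $\Omega_{u}$ while $H(u;t)\to 0$ at the lower end (this is precisely where the tail and support part of the regularity enters), leaving $K(\theta;t)=\int_{\Omega_{u}}H(u;t)\,f_{\theta}(u)\,du=\textrm{E}_{U\sim f_{\theta}}[H(U;t)]=P_{\theta}(T'\ge t)$, where $T'$ follows the marginal mixture law $\int f(t\mid u)f_{\theta}(u)\,du$. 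Evaluated at the truth, $K(\theta_{0};t)$ is the marginal survival function of $T$ under $\theta_{0}$; since the observed $T$ is drawn from exactly that law, $K(\theta_{0};T)\sim\textrm{Uniform}(0,1)$ by the PIT, and with the equal-tailed $\textrm{CI}_{\alpha}(t)$ determined by $K(\theta_{\textrm{lower}};t)=\alpha/2$ and $K(\theta_{\textrm{upper}};t)=1-\alpha/2$ we obtain $P_{\theta_{0}}(\theta_{0}\in\textrm{CI}_{\alpha}(T))=1-\alpha$. The delicate points are concentrated in this last step: fixing the orientation of the monotonicity (if the $\alpha$-quantiles of $T\mid u$ decrease rather than increase in $u$, one uses $P(T\le t\mid u)$ throughout, with the analogous switch for $U$ under $\theta$), justifying that the integration-by-parts boundary terms vanish under the stated conditions, and reading the interval constructions as the canonical two-sided ones, since an arbitrary $(1-\alpha)$-content interval need not have its endpoints at $t$-free confidence levels and hence need not be exact.
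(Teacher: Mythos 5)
Your proposal is correct and follows essentially the same route as the paper's proof: you factor the h-confidence as $c_h(\theta,u;\mathbf{y})=c(u;\mathbf{y})\,c'(\theta;u)$, recover the $u$-marginal $\nabla_u P(T\ge t\mid u)$ directly, and via integration by parts in $u$ with vanishing boundary terms identify the $\theta$-marginal with the marginal p-value-based CD $\nabla_\theta P_\theta(T\ge t)$, after which exact coverage for both intervals follows from the pivotal (PIT) argument. Your extra care about the orientation of the monotonicity conditions and about reading the intervals as fixed-quantile (equal-tailed) constructions merely makes explicit what the paper leaves implicit.
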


\medskip
\noindent Example 4 (continued)
Based on $T|u\sim\textrm{Gamma}(m,u)$ and $U\sim\textrm{Exp}(\theta)$,
we can define
$$
c(u;\mathbf{y}) = \nabla_{u} P(T\geq t|u)
= \frac{t^{m}}{\Gamma(m)}u^{m-1}e^{-tu}
\quad \textrm{and} \quad
c'(\theta;u) = \nabla_{\theta} P_{\theta}(U \geq u)
= u e^{-u\theta},
$$
leading to $c_{0}(u;\mathbf{y})=t^{m}/\{u\Gamma(m)\}$
and $c'_{0}(\theta;u)=u/\theta$, respectively.
It gives the modification term for h-confidence,
$$
c_{0}(\theta, u; \mathbf{y}) 
= c'_{0}(\theta;u) c_{0}(u;\mathbf{y})
=\frac{t^{m}}{\theta \Gamma(m)}
= c_{0}(\theta; \mathbf{y})
\propto \theta^{-1},
$$
so that the h-confidence \eqref{eq:h-confidence} 
becomes \eqref{eq:hd_extendable} again,
$$
c_{h}(\theta, u; \mathbf{y})
= \frac{t^{m}}{\Gamma(m)} u^m \exp\{ -u(\theta+t) \}
= c_{0}(\theta; \mathbf{y}) L_{e}(\theta,u;\mathbf{y})
= c(\theta;\mathbf{y}) f_{\theta}(u|\mathbf{y}).
$$
This leads to the CD for fixed unknown $\theta$,
$$
c(\theta;\mathbf{y})
=\int_{0}^{\infty} c_{h}(\theta, u; \mathbf{y}) du
=\frac{mt^{m}}{(\theta +t)^{m+1}},
$$
and the PD for random unknown $u$,
\begin{equation*}
c(u;\mathbf{y})
=\int_{0}^{\infty} c_{h}(\theta, u; \mathbf{y}) d\theta
=\int_{0}^{\infty}f_{\theta}(u|\mathbf{y})c(\theta;\mathbf{y})d\theta
=\frac{t^{m}}{\Gamma(m)}u^{m-1}e^{-tu}, 
\end{equation*}
which satisfy the confidence features,
as $c(\theta;\mathbf{y})$ in Section \ref{sec:CI} 
and $c(u;\mathbf{y})=f^{*}(u;t)$ in Section \ref{sec:PI}, respectively.
Theorem \ref{thm:realized} implies 
that the h-confidence \eqref{eq:hd_extendable}
gives intervals for fixed and random unknowns,
maintaining confidence features.
\hfill \qed

\section{Approximation methods}
\label{sec:approximate}

To facilitate extended likelihood inferences,
it is crucial to find appropriate modification terms,
such as $\exp\{a(\boldsymbol{\theta};\mathbf{y})\}$ for the h-likelihood \eqref{eq:h-lik}
and $c_{0}(\boldsymbol{\theta};\mathbf{y})$ for the h-confidence \eqref{eq:hd_extendable}.
However, in general class of models with additional random unknowns,
such modification terms could often be hard to find or do not have explicit forms.
This section studies approximate methods for the h-confidence and the h-likelihood.

\subsection{Approximate h-confidence}
\label{sec:approximate_cd}
As the current CD has mainly been developed for single parameter cases,
extending h-confidence to multi-parameter cases could be challenging.
This section proposes approximate h-confidences 
for multi-dimensional $\boldsymbol{\theta}$ and $\mathbf{v}$,
which asymptotically achieve the confidence feature 
as long as $\widehat{\boldsymbol{\theta}}\overset{p}{\to}\boldsymbol{\theta}$.
If we have a proper CD for $\boldsymbol{\theta}$,
the definition of h-confidence \eqref{eq:hd_extendable}
can be naturally extended to multi-parameter cases as
$$
c_{h}(\boldsymbol{\theta},\mathbf{v};\mathbf{y})
= c_0(\boldsymbol{\theta};\mathbf{y}) f_{\boldsymbol{\theta}}(\mathbf{y},\mathbf{v})
= c(\boldsymbol{\theta};\mathbf{y}) f_{\boldsymbol{\theta}}(\mathbf{v}|\mathbf{y}),
$$
leading to the PD for $\mathbf{v}$,
$$
c(\mathbf{v};\mathbf{y}) = \int_{\boldsymbol{\Theta}} c(\boldsymbol{\theta}; \mathbf{y}) 
f_{\boldsymbol{\theta}}(\mathbf{v}|\mathbf{y}) d \boldsymbol{\theta},
$$
analogous to the marginal posterior of $\mathbf{v}$ \eqref{eq:marginal_posterior}
by replacing the marginal posterior $\pi(\boldsymbol{\theta}|\mathbf{y})$ for $\boldsymbol{\theta}$ 
with the CD $c(\boldsymbol{\theta};\mathbf{y})$.
\citet{fraser84} showed that the normalized marginal likelihood
and the asymptotic distribution of MLEs
are asymptotically equivalent to the CD.
The Jeffreys prior leads to the normalized likelihood 
for a specific scale of $\boldsymbol{\theta}$ \citep{lee24induction}, 
which is also asymptotically equivalent to the CD.
Thus, we consider the following three approximate h-confidences:

\begin{itemize}
\item[] (C1)
Where $\pi_{\textrm{J}}(\boldsymbol{\theta}|\mathbf{y})$ is the marginal posterior
density of $\boldsymbol{\theta}$ under the Jeffreys prior,
\begin{equation*}
c_{1}(\boldsymbol{\theta},\mathbf{v};\mathbf{y})
=\pi_{\textrm{J}}(\boldsymbol{\theta}|\mathbf{y}) \
f_{\boldsymbol{\theta}}(\mathbf{v}|\mathbf{y}).
\end{equation*}

\item[] (C2)
Where $L_{n}(\boldsymbol{\theta};\mathbf{y})
=L(\boldsymbol{\theta};\mathbf{y})
/\int_{\Theta}L(\boldsymbol{\theta};\mathbf{y})d\boldsymbol{\theta}$ 
is the normalized likelihood,
\begin{equation*}
c_{2}(\boldsymbol{\theta},\mathbf{v};\mathbf{y})
=L_{n}(\boldsymbol{\theta};\mathbf{y}) \
f_{\boldsymbol{\theta}}(\mathbf{v}|\mathbf{y}).
\end{equation*}

\item[] (C3)
Where $\phi(\widehat{\boldsymbol{\theta}},\widehat{I}^{\boldsymbol{\theta}\boldsymbol{\theta}})$
is the probability density of the asymptotic normal distribution $N(\widehat{\boldsymbol{\theta}},\widehat{I}^{\boldsymbol{\theta}\boldsymbol{\theta}})$,
\begin{equation*}
c_{3}(\boldsymbol{\theta},\mathbf{v};\mathbf{y})
=\phi (\widehat{\boldsymbol{\theta}},\widehat{I}^{\boldsymbol{\theta}\boldsymbol{\theta}}) \
f_{\boldsymbol{\theta}}(\mathbf{v}|\mathbf{y}).
\end{equation*}
\end{itemize}
Results in Section \ref{sec:finite} give justifications of C1-C3 
as approximate h-confidences.
Approximate h-confidences C1-C3 give the PDs considered by \citet{lee16}.
As long as $\widehat{\boldsymbol{\theta}}\overset{p}{\to}\boldsymbol{\theta}_0$,
it can be generally shown that the approximate PDs are asymptotically equivalent:
\begin{equation*}
c_{k}(\mathbf{v};\mathbf{y})
= \int_{\boldsymbol{\Theta}} c_{k}(\boldsymbol{\theta},\mathbf{v};\mathbf{y}) d\boldsymbol{\theta}
\overset{p}{\to} f_{\boldsymbol{\theta}_{0}}(\mathbf{v}|\mathbf{y}).  
\end{equation*}
Even though the MHLE of $\mathbf{v}$ can be neither consistent nor asymptotically normal,
we can still obtain asymptotically correct PD $f_{\theta_{0}}(\mathbf{v}|\mathbf{y})$, 
as long as the MLE is consistent, i.e., $\widehat{\boldsymbol{\theta}}\overset{p}{\to}\boldsymbol{\theta}_0$.
The resulting PDs from C1-C3 asymptotically satisfy the confidence feature 
for both marginal and conditional predictions,
whereas the Wald PI satisfies it only for conditional predictions.
Thus, C1-C3 can overcome the difficulty
when $\widehat{\mathbf{v}}-\mathbf{v}=O_p(1)$
as in the missing data problem, prediction of future events, longitudinal studies, etc.
\citet{lee16} conducted simulation studies for various models with independent random effects,
demonstrating that C1-C3 and the normalized profile h-likelihood perform similarly,
improving Wald PIs of \citet{paik15} and the plug-in method 
for both marginal and conditional PIs in small samples.
C1 and C2 often involve intractable integration
as the dimension of $\boldsymbol{\theta}$ increases.
The use of the normalized profile h-likelihood is also computationally demanding.
\citet{cao18} showed that C3 can be straightforwardly computed
by using the bootstrap method,
which provides almost exact PIs 
for heavy-tailed process functional regression models in finite samples.

\subsubsection{Approximate PD for marginal prediction}

For marginal prediction of future random unknowns in the exponential model,
\citet{lee16} showed that
the normalized profile h-likelihood (the PD of \citet{lee09})
is equivalent to the marginal posterior $\pi(\mathbf{v}|\mathbf{y})$ 
under the Jeffreys prior,
which can be shown to be the same as our PD, $c_{1}(\mathbf{v};\mathbf{y})$,
from the h-confidence. 
Via simulation studies for various models with $n$ random effects,
they showed that C1-C3 and the normalized profile h-likelihood 
maintain the confidence feature well even when $m=1$: 
for a $100(1-\alpha)\%$ PI for $\textit{\textbf{V}}$ 
based on $c(\mathbf{v};\mathbf{y})$, 
\begin{equation*} \label{eq:approximate_cd_marginal}
P_{\theta_{0}}(\textit{\textbf{V}}\in \textrm{PI}(\textit{\textbf{Y}}))
\approx C(\textit{\textbf{V}}\in \textrm{PI}(\mathbf{y}))=1-\alpha, 
\end{equation*}
i.e., the coverage probability of a PI procedure (LHS) is very close to the
confidence level of an observed PI (RHS). 
Thus, the approximate PDs are satisfactory 
for marginal prediction of $\textit{\textbf{V}}$.

\subsubsection{Approximate PD for conditional prediction}

\citet{lee16} showed that the PI for an individual realized value 
maintains the coverage probability well. 
However, care is necessary when PIs are made simultaneously for all $n$ realized values. 
For $100(1-\alpha)\%$ simultaneous PI procedure of all the realized values, note that
\begin{equation*}
C(v_{0i}\in \textrm{PI}_{i}(\mathbf{y}))=1-\alpha \ 
\approx \ \frac{1}{n}\sum_{i=1}^{n}
P_{\theta_{0}}(v_{0i}\in \textrm{PI}_{i}(\textit{\textbf{Y}})|V_{i}=v_{0i})
\overset{p}{\to} P_{\theta_{0}}(V_{i}\in \textrm{PI}_{i}(\textit{\textbf{Y}})), 
\end{equation*}
i.e., the confidence level of an observed PI for a certain realized value (LHS) 
is close to the average coverage probability $1-\alpha$ of the PI procedure (RHS).
As \citet{dawid86} noted, the future values are exchangeable, while the
realized values are not: for marginal PIs, 
\begin{equation*}
P_{\theta_{0}}(V_{i}\in \textrm{PI}_{i}(\textit{\textbf{Y}}))=P_{\theta
_{0}}(V_{j}\in \textrm{PI}_{j}(\textit{\textbf{Y}}))\quad \textrm{for all }i,j, 
\end{equation*}
whereas for conditional PIs, 
\begin{equation*}
P_{\theta_{0}}(v_{0i}\in \textrm{PI}_{i}(\textit{\textbf{Y}})|V_{i}=v_{0i})
\neq P_{\theta_{0}}(v_{0j}\in \textrm{PI}_{j}(\textit{\textbf{Y}})|V_{j}=v_{0j})
\quad \textrm{for }v_{0i}\neq v_{0j}. 
\end{equation*}
Via simulation studies,
\citet{lee16} noted that the coverage probability 
can be liberal at extreme value of $v_{0i}$, 
$P_{\theta_{0}}(v_{0i} \in \textrm{PI}_{i}(\textit{\textbf{Y}})|V_{i}=v_{0i})<1-\alpha $, 
and conservative at moderate value of $v_{0i}$, 
$P_{\theta_{0}}(v_{0i} \in \textrm{PI}_{i}(\textit{\textbf{Y}})|V_{i}=v_{0i})>1-\alpha$,
because the average coverage probability of simultaneous PIs is $1-\alpha$.
To have uniform coverage probabilities for a simultaneous PI procedure,
$m$ needs to be large \citep{lee16}. 
Further studies are required for the simultaneous
conditional PIs to maintain the coverage probability uniformly 
for all the realized values.
A real data example of simultaneous PIs is provided in Section \ref{app:ela} of the Appendix.

\subsection{Approximate h-likelihood}

The construction of the h-likelihood \eqref{eq:h-lik}
necessitates identifying 
a Bartlizable scale $\mathbf{v}$ and its modification term $a(\boldsymbol{\theta};\mathbf{y})$.
While a Bartlizable scale can be easily identified 
using Lemma \ref{lem:bartlett}, 
the modification term may not have an explicit form in general.
In such cases, the h-likelihood can be approximated
by adapting approximation methods for the marginal likelihood
$L(\boldsymbol{\theta};\mathbf{y})$.
Recently, \citet{han22b} proposed the use of enhanced Laplace approximation (ELA)
to obtain the MLEs and their variance estimators.
They employed the Monte Carlo sampling method 
to approximate the marginal likelihood $L(\boldsymbol{\theta};\mathbf{y})$ by 
\begin{equation*}
L_{B}(\boldsymbol{\theta};\mathbf{y})
=\frac{1}{B}\sum_{b=1}^{B}
\frac{L_{e}(\boldsymbol{\theta},\mathbf{v}_{b})}{q(\mathbf{v}_{b})}, 
\end{equation*}
where $\mathbf{v}_{b}$ are independent samples from an arbitrary
probability density function $q(\cdot)$, having the same support
with $\mathbf{v}$. They used a multivariate normal distribution for $q(\mathbf{v})$, 
\begin{equation*}
N\left( \widetilde{\mathbf{v}},
\left[ \nabla_{\mathbf{v}}^{2}h(\boldsymbol{\theta},\mathbf{v})
\right]_{\mathbf{v}=\widetilde{\mathbf{v}}}^{-1}\right).
\end{equation*}
When $B=1$ and $\mathbf{v}_{b}=\widetilde{\mathbf{v}}$, 
the ELA becomes equivalent to the LA, 
which is exact when the predictive likelihood is normal.
Numerical studies of \citet{han22b} demonstrate that the ELA performs well
even in binary data with temporally and spatially correlated random effects.
They further extended the ELA to restricted MLEs (REMLEs).
The restricted likelihood is a modified marginal likelihood,
which has been proposed for inferences 
when the dimension of the fixed effects $\boldsymbol{\beta}$ increases with the sample size $N$.
From \eqref{eq:htilde}, it can be viewed as another modified extended likelihood.

We adapt the ELA for approximation of the h-likelihood.
When $a(\boldsymbol{\theta};\mathbf{y})
=-\ell_{p}(\widetilde{\mathbf{v}};\mathbf{y})
=\ell (\boldsymbol{\theta};\mathbf{y})-\ell_{e}(\boldsymbol{\theta};\widetilde{\mathbf{v}})$ 
is not explicitly known, it can be approximated by 
$\log L_{B}(\boldsymbol{\theta};\mathbf{y})
-\ell_{e}(\boldsymbol{\theta},\widetilde{\mathbf{v}})$ 
to give an approximated h-likelihood, 
\begin{equation}
h_{B}(\boldsymbol{\theta},\mathbf{v})
=\ell_{e}(\boldsymbol{\theta},\mathbf{v})
-\ell_{e}(\boldsymbol{\theta},\widetilde{\mathbf{v}})
+\log L_{B}(\boldsymbol{\theta};\mathbf{y}).
\label{eq:h-ela}
\end{equation}
Then, we can obtain the approximate h-information, 
\begin{equation*}
I_{B}(\boldsymbol{\theta},\mathbf{v})
=-\nabla^{2}_{\boldsymbol{\theta},\mathbf{v}}\ell_{e}(\boldsymbol{\theta},\mathbf{v})
+\left[\nabla^{2}_{\boldsymbol{\theta},\mathbf{v}}
\ell_{e}(\boldsymbol{\theta},\mathbf{v})\right]_{\mathbf{v}=\widetilde{\mathbf{v}}}
- \textrm{block-diag}\{ I_{11}(\boldsymbol{\theta}), \mathbf{0} \}, 
\end{equation*}
where
$L_e^{(b)} = L_{e}\left(\boldsymbol{\theta},\mathbf{v}^{(b)}\right)$,
$\ell_{e}^{(b)}=\log L_{e}^{(b)}$,
$w_{b} = \{L_{e}^{(b)}/q(\mathbf{v}^{(b)})\}
\{ \sum_{b=1}^{B}
L_{e}^{(b)}/q(\mathbf{v}^{(b)})
\}^{-1}$
and
\begin{align*}
I_{11}(\boldsymbol{\theta})
=& \left[ \sum_{b=1}^{B}
w_{b}
\nabla_{\boldsymbol{\theta}} \ell_{e}^{(b)}
\right] 
\left[ \sum_{b=1}^{B}
w_{b}
\nabla_{\boldsymbol{\theta}} \ell_{e}^{(b)}
\right]^{\intercal}
- \sum_{b=1}^{B}w_{b}\left[
\left(\nabla_{\boldsymbol{\theta}} \ell_{e}^{(b)}\right)
\left(\nabla_{\boldsymbol{\theta}} \ell_{e}^{(b)}\right)^\intercal
+\nabla_{\boldsymbol{\theta}}^2 \ell_{e}^{(b)}
\right].
\end{align*}

\begin{theorem}
\label{thm:h-ela} Suppose that $(\widehat{\boldsymbol{\theta}},\widehat{\mathbf{v}})$ 
are MHLEs from the h-likelihood and 
$(\widehat{\boldsymbol{\theta}}_{B},\widehat{\mathbf{v}}_{B})$ are approximate MHLEs
from the approximate h-likelihood \eqref{eq:h-ela}. Then, as $B\to\infty$,
\begin{equation*}
(\widehat{\boldsymbol{\theta}}_{B},\widehat{\mathbf{v}}_{B})
\overset{p}{\to }(\widehat{\boldsymbol{\theta}},\widehat{\mathbf{v}})
\quad \textrm{and} \quad 
I_{B}(\widehat{\boldsymbol{\theta}}_{B},\widehat{\mathbf{v}}_{B})
\overset{p}{\to }I(\widehat{\boldsymbol{\theta}},\widehat{\mathbf{v}}). 
\end{equation*}
\end{theorem}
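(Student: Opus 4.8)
The randomness in this statement is purely Monte Carlo: the data $\mathbf{y}$, the exact MHLEs $(\widehat{\boldsymbol{\theta}},\widehat{\mathbf{v}})$, and the exact h-information $\widehat{I}$ are all held fixed, while every limit is taken as the number $B$ of importance draws $\mathbf{v}_{1},\dots,\mathbf{v}_{B}$, drawn independently from $q$, tends to infinity. The whole plan rests on recognising $L_{B}(\boldsymbol{\theta};\mathbf{y})=B^{-1}\sum_{b}L_{e}(\boldsymbol{\theta},\mathbf{v}_{b})/q(\mathbf{v}_{b})$ as an unbiased importance-sampling estimator of the marginal likelihood, since $\textrm{E}_{q}\{L_{e}(\boldsymbol{\theta},\mathbf{v})/q(\mathbf{v})\}=\int L_{e}(\boldsymbol{\theta},\mathbf{v})\,d\mathbf{v}=L(\boldsymbol{\theta};\mathbf{y})$. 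First I would apply the strong law of large numbers to get $L_{B}(\boldsymbol{\theta};\mathbf{y})\to L(\boldsymbol{\theta};\mathbf{y})$, hence $\log L_{B}(\boldsymbol{\theta};\mathbf{y})\to\ell(\boldsymbol{\theta};\mathbf{y})$, pointwise in $\boldsymbol{\theta}$. Because $a(\boldsymbol{\theta};\mathbf{y})=\ell(\boldsymbol{\theta};\mathbf{y})-\ell_{e}(\boldsymbol{\theta},\widetilde{\mathbf{v}})$, the approximate h-likelihood \eqref{eq:h-ela} then satisfies $h_{B}(\boldsymbol{\theta},\mathbf{v})\overset{p}{\to}\ell_{e}(\boldsymbol{\theta},\mathbf{v})+a(\boldsymbol{\theta};\mathbf{y})=h(\boldsymbol{\theta},\mathbf{v})$ of \eqref{eq:h-lik}: the limiting objective is exactly the h-likelihood.

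To pass from convergence of the objective to convergence of its maximiser I would exploit that the inner maximisation over $\mathbf{v}$ is not approximated at all: for fixed $\boldsymbol{\theta}$, $\mathbf{v}\mapsto h_{B}(\boldsymbol{\theta},\mathbf{v})$ is $\ell_{e}(\boldsymbol{\theta},\mathbf{v})$ plus a term free of $\mathbf{v}$, so $\argmax_{\mathbf{v}}h_{B}(\boldsymbol{\theta},\mathbf{v})=\widetilde{\mathbf{v}}(\boldsymbol{\theta})$ and the profile approximate h-likelihood equals $\log L_{B}(\boldsymbol{\theta};\mathbf{y})$. Therefore $\widehat{\boldsymbol{\theta}}_{B}=\argmax_{\boldsymbol{\theta}}\log L_{B}(\boldsymbol{\theta};\mathbf{y})$ is a Monte Carlo maximum likelihood estimator and $\widehat{\mathbf{v}}_{B}=\widetilde{\mathbf{v}}(\widehat{\boldsymbol{\theta}}_{B})$, mirroring $\widehat{\boldsymbol{\theta}}=\argmax_{\boldsymbol{\theta}}\ell(\boldsymbol{\theta};\mathbf{y})$ (by \eqref{eq:htilde}) and $\widehat{\mathbf{v}}=\widetilde{\mathbf{v}}(\widehat{\boldsymbol{\theta}})$. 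I would upgrade $\log L_{B}\to\ell$ to locally uniform convergence on a compact neighbourhood of $\widehat{\boldsymbol{\theta}}$ using a $q$-integrable envelope for $L_{e}(\boldsymbol{\theta},\mathbf{v})/q(\mathbf{v})$ and a stochastic-equicontinuity argument, in the spirit of the Monte Carlo likelihood arguments used by \citet{han22b} for the ELA; the argmax theorem then delivers $\widehat{\boldsymbol{\theta}}_{B}\overset{p}{\to}\widehat{\boldsymbol{\theta}}$ under the identifiability and smoothness in R1-R6, and $\widehat{\mathbf{v}}_{B}=\widetilde{\mathbf{v}}(\widehat{\boldsymbol{\theta}}_{B})\overset{p}{\to}\widetilde{\mathbf{v}}(\widehat{\boldsymbol{\theta}})=\widehat{\mathbf{v}}$ follows from continuity of $\widetilde{\mathbf{v}}(\cdot)$ (implicit function theorem) and the continuous mapping theorem.

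For the information matrix I would split $I_{B}(\boldsymbol{\theta},\mathbf{v})$ into its deterministic part $-\nabla^{2}_{\boldsymbol{\theta},\mathbf{v}}\ell_{e}(\boldsymbol{\theta},\mathbf{v})+[\nabla^{2}_{\boldsymbol{\theta},\mathbf{v}}\ell_{e}]_{\mathbf{v}=\widetilde{\mathbf{v}}}$, which converges to its counterpart in $I(\boldsymbol{\theta},\mathbf{v})=-\nabla^{2}_{\boldsymbol{\theta},\mathbf{v}}h$ by continuity of the Hessian of $\ell_{e}$ and the convergence of the evaluation points above, and the random part $I_{11}(\boldsymbol{\theta})$. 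The weights $w_{b}=\{L_{e}^{(b)}/q(\mathbf{v}^{(b)})\}\{\sum_{b}L_{e}^{(b)}/q(\mathbf{v}^{(b)})\}^{-1}$ are self-normalised importance weights for the conditional density $f_{\boldsymbol{\theta}}(\mathbf{v}\mid\mathbf{y})\propto L_{e}(\boldsymbol{\theta},\mathbf{v})$, so $I_{11}(\boldsymbol{\theta})$ is a Monte Carlo form of the Louis (missing-information) identity: by the strong law for ratio estimators, valid once $L_{e}/q$, $(L_{e}/q)\nabla_{\boldsymbol{\theta}}\ell_{e}$ and $(L_{e}/q)\nabla^{2}_{\boldsymbol{\theta}}\ell_{e}$ have finite first moments under $q$, one gets $\sum_{b}w_{b}\nabla_{\boldsymbol{\theta}}\ell_{e}^{(b)}\to\textrm{E}[\nabla_{\boldsymbol{\theta}}\ell_{e}\mid\mathbf{y}]$ and likewise for the outer-product and Hessian averages, so $I_{11}(\boldsymbol{\theta})$ converges to the otherwise-intractable component of the modification-term Hessian. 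Combining the two parts gives $I_{B}(\boldsymbol{\theta},\mathbf{v})\overset{p}{\to}I(\boldsymbol{\theta},\mathbf{v})$ pointwise; a locally uniform version, with the continuous mapping theorem and the convergence of $(\widehat{\boldsymbol{\theta}}_{B},\widehat{\mathbf{v}}_{B})$, then yields $I_{B}(\widehat{\boldsymbol{\theta}}_{B},\widehat{\mathbf{v}}_{B})\overset{p}{\to}I(\widehat{\boldsymbol{\theta}},\widehat{\mathbf{v}})$.

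I expect the main obstacle to be twofold. The first is upgrading the pointwise strong law to the locally uniform convergence that argmax consistency requires: this needs a single $q$-integrable dominating function for $L_{e}(\boldsymbol{\theta},\mathbf{v})/q(\mathbf{v})$ over a neighbourhood of $\widehat{\boldsymbol{\theta}}$, which is a tail condition on the importance density relative to $L_{e}$ --- plausible for the Gaussian $q=N(\widetilde{\mathbf{v}},[\nabla^{2}_{\mathbf{v}}h]^{-1}_{\mathbf{v}=\widetilde{\mathbf{v}}})$ used in the ELA when $\nabla^{2}_{\mathbf{v}}h$ is negative definite at $\widetilde{\mathbf{v}}$, but to be made precise under R1-R6. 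The second is the bookkeeping for the information part: verifying that the composite $I_{B}$, which replaces the chain-rule contributions of $\widetilde{\mathbf{v}}(\boldsymbol{\theta})$ and the intractable $\nabla^{2}_{\boldsymbol{\theta}}a$ by the separate pieces $[\nabla^{2}_{\boldsymbol{\theta},\mathbf{v}}\ell_{e}]_{\mathbf{v}=\widetilde{\mathbf{v}}}$ and $I_{11}(\boldsymbol{\theta})$, has precisely the right limit; this rests on the envelope identity $\nabla_{\boldsymbol{\theta}}\ell_{e}(\boldsymbol{\theta},\widetilde{\mathbf{v}}(\boldsymbol{\theta}))=[\nabla_{\boldsymbol{\theta}}\ell_{e}]_{\mathbf{v}=\widetilde{\mathbf{v}}}$, which collapses the cross terms to a Schur complement, and on matching $I_{11}$ to the Louis estimator with care over signs and block structure.
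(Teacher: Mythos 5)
Your proposal is correct and follows essentially the same route as the paper: exploit that the $\mathbf{v}$-part of $h_{B}$ is exact, reduce the $\boldsymbol{\theta}$-part to convergence of the Monte Carlo marginal likelihood $L_{B}\to L$ (so $\widehat{\boldsymbol{\theta}}_{B}$ is the Monte Carlo MLE and $\widehat{\mathbf{v}}_{B}=\widetilde{\mathbf{v}}(\widehat{\boldsymbol{\theta}}_{B})$), and handle $I_{11}$ via the self-normalized importance weights and the missing-information identity. The only difference is that you sketch proofs of the LLN/uniform-convergence/argmax steps that the paper simply imports from \citet{han22b}.
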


\begin{corollary}
\label{cor:ela} Under the regularity conditions R1-R6
in Section \ref{app:proof} of the Appendix, 
as $B\to\infty$,
\begin{equation*}
(\widehat{\boldsymbol{\theta}}_{B}-\boldsymbol{\theta},
\ \widehat{\mathbf{v}}_{B}-\mathbf{v})^\intercal
\overset{d}{\to }
N (\mathbf{0},I(\widehat{\boldsymbol{\theta}},\widehat{\mathbf{v}})).
\end{equation*}
\end{corollary}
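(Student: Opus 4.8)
The plan is to derive the corollary by chaining together Theorem~\ref{thm:h-asymptotic}, which supplies the asymptotic normality of the exact MHLEs $(\widehat{\boldsymbol{\theta}},\widehat{\mathbf{v}})$ with variance $\mathcal{I}_{\boldsymbol{\theta}}^{-1}$ consistently estimated by $I(\widehat{\boldsymbol{\theta}},\widehat{\mathbf{v}})^{-1}$, and Theorem~\ref{thm:h-ela}, which controls the discrepancy between the approximate MHLEs $(\widehat{\boldsymbol{\theta}}_{B},\widehat{\mathbf{v}}_{B})$ from \eqref{eq:h-ela} and the exact ones. First I would use the decomposition
\begin{equation*}
(\widehat{\boldsymbol{\theta}}_{B}-\boldsymbol{\theta},\ \widehat{\mathbf{v}}_{B}-\mathbf{v})^{\intercal}
= \big(\widehat{\boldsymbol{\theta}}_{B}-\widehat{\boldsymbol{\theta}},\ \widehat{\mathbf{v}}_{B}-\widehat{\mathbf{v}}\big)^{\intercal}
+ \big(\widehat{\boldsymbol{\theta}}-\boldsymbol{\theta},\ \widehat{\mathbf{v}}-\mathbf{v}\big)^{\intercal},
\end{equation*}
and treat the two summands separately: by Theorem~\ref{thm:h-ela} the first term is $o_{p}(1)$ as $B\to\infty$, while by Theorem~\ref{thm:h-asymptotic} the second term is asymptotically normal under R1-R6. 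An application of Slutsky's theorem then transfers the limiting distribution from the exact estimators to the approximate ones.

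For the variance, I would invoke the second conclusion of Theorem~\ref{thm:h-ela}, namely $I_{B}(\widehat{\boldsymbol{\theta}}_{B},\widehat{\mathbf{v}}_{B})\overset{p}{\to}I(\widehat{\boldsymbol{\theta}},\widehat{\mathbf{v}})$ as $B\to\infty$, together with the consistency of $\widehat{I}^{-1}=I(\widehat{\boldsymbol{\theta}},\widehat{\mathbf{v}})^{-1}$ for $\mathcal{I}_{\boldsymbol{\theta}}^{-1}$ from Theorem~\ref{thm:h-asymptotic}. Continuity of matrix inversion at a positive-definite limit (available because the $\mathbf{v}$-scale is Bartlizable, so the expected h-information equals $\textrm{E}[SS^{\intercal}]$ and is non-negative definite, and is eventually non-singular under R1-R6) then gives $I_{B}(\widehat{\boldsymbol{\theta}}_{B},\widehat{\mathbf{v}}_{B})^{-1}\overset{p}{\to}\mathcal{I}_{\boldsymbol{\theta}}^{-1}$, so the observed approximate h-information provides the stated variance estimator. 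Combining this with the previous paragraph by another use of Slutsky's theorem yields the asserted convergence in distribution.

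The main obstacle is reconciling the two asymptotic regimes: Theorem~\ref{thm:h-ela} holds for fixed data as the Monte Carlo size $B\to\infty$, whereas the normality in Theorem~\ref{thm:h-asymptotic} is a large-data statement ($m,n\to\infty$). To make the joint assertion rigorous one must let $B=B(N)$ grow with the data at a rate fast enough that the Monte Carlo error $(\widehat{\boldsymbol{\theta}}_{B}-\widehat{\boldsymbol{\theta}},\widehat{\mathbf{v}}_{B}-\widehat{\mathbf{v}})$ is $o_{p}(1)$ on the scale on which $(\widehat{\boldsymbol{\theta}}-\boldsymbol{\theta},\widehat{\mathbf{v}}-\mathbf{v})$ stabilizes, or equivalently use a diagonal-subsequence argument so that the $B\to\infty$ limit is taken before, or jointly with, the data limit. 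I would make this explicit by reading the corollary under the convention that $B\to\infty$ with the exact-MHLE asymptotics of Theorem~\ref{thm:h-asymptotic} already in force, and remark that a standard rate condition on $B$ relative to $N$ suffices. The remaining steps---checking that the importance density $q(\mathbf{v})$ has the correct support so the weights $w_{b}$ are well defined, and that the Laplace-type centering $\widetilde{\mathbf{v}}$ used in \eqref{eq:h-ela} is consistent---are routine consequences of R1-R6 and the construction of $h_{B}$.
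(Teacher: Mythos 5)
Your proposal is correct and follows essentially the same route as the paper, which treats the corollary as an immediate consequence of chaining Theorem~\ref{thm:h-ela} (the $B\to\infty$ convergence of $(\widehat{\boldsymbol{\theta}}_{B},\widehat{\mathbf{v}}_{B})$ and $I_{B}$ to the exact MHLE quantities) with Theorem~\ref{thm:h-asymptotic} (asymptotic normality of the exact MHLEs and consistency of $\widehat{I}$) via Slutsky's theorem. Your additional remark about reconciling the Monte Carlo limit $B\to\infty$ with the data limit $m,n\to\infty$ (e.g.\ letting $B=B(N)$ grow suitably) makes explicit a point the paper leaves implicit, but does not change the argument.
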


\noindent Thus, using the approximate h-likelihood, we can obtain
MHLEs and their consistent variance estimators.
As $B$ increases, we can obtain more accurate MHLEs, 
but the computational cost also increases. 
It is worth noting that Theorem \ref{thm:h-ela} holds 
for any choice of sampling distribution $q(\mathbf{v})$. 
However, the convergence rate would depend on the choice of $q(\mathbf{v})$,
since it could require large $B$ when the predictive likelihood is too far from $q(\mathbf{v})$. 
In Section \ref{app:ela} of the Appendix, 
we demonstrate that the use of the conjugate distribution for $q(\mathbf{v})$
can improve the computational efficiency.

\section{Conclusion}
\label{sec:conclusion}

For fixed unknowns, 
\citet{fisher22} introduced the MLEs for point estimation,
and \citet{fisher35} proposed the fiducial probability for interval estimation.
The extended likelihood has been proposed for
inferences of statistical models with additional random unknowns,
but it fails to give optimal estimation and prediction.
This paper introduces the new h-likelihood and h-confidence 
by modifying the extended likelihood.
The new h-likelihood properly extends the ML procedure.
The MHLEs provide asymptotically optimal estimation and prediction 
for fixed and random parameters,
and the Hessian matrix provides consistent variance estimators of the MHLEs.
Furthermore, the h-likelihood offers advantages
in scalability for large datasets and complex models,
compared with the marginal likelihood.
For interval estimation, h-confidence extends CD for fixed unknowns
to provide exact interval estimation and prediction for 
fixed and random unknowns, even in small samples.
Thus, our modified extended likelihoods 
can provide not only asymptotically optimal point estimators and predictors
but also exact interval estimators and predictors for small samples.
For the general class of models,
approximate h-likelihood gives asymptotically optimal MHLEs
and approximate h-confidence gives 
asymptotically correct interval prediction for random unknowns 
as long as the MLEs are consistent.
The main elements of Fisherian likelihood theory
are consistency, sufficiency, Fisher information, efficiency, 
asymptotic optimality of MLEs, and ancillarity \citep{efron98}.
This paper extends them
to accommodate both fixed and random unknowns.
Building upon the new theory for prediction of unobserved random unknowns, 
we anticipate the necessity for more comprehensive 
and diverse avenues of future research.

\bibliographystyle{apalike}
\bibliography{reference}

\newpage
\begin{appendix}

\section{Numerical study for Poisson-gamma HGLM}
\label{app:PG-HGLM}

This section presents a numerical study for Poisson-gamma HGLM in Example 2 of the main paper.
Similar to the numerical study in Section 4.3 of the main paper,
we investigate the properties of MHLEs for the Poisson-gamma HGLM
with the linear predictor
$$
\eta_{ij} = \log \mu_{ij} = \beta_0 + \beta_1 x_{ij} + \log u_{i},
$$
where $x_{ij}$ is generated from Uniform$[-0.5,0.5]$
and all the true parameters are set to be 0.5.
To see the convergence, 
we let $m$ or $n$ increase in $\{5, 20, 80\}$.
Figure \ref{fig:consistency_pg} shows the box-plots 
for the estimation errors of each parameter from 500 replications.
The MHLE of $\beta_{1}$ for the within-cluster covariate $x_{ij}$
is consistent when either $m$ or $n$ increases.
However, the MHLE of $\beta_{0}$ and 
the MHLE of the between-cluster variance $\lambda$
are consistent only when $n$ increases.
Furthermore, the MHLE of a random parameter $u_1$
is consistent when both $m$ and $n$ increase:
$\widetilde{u}_{1}-u_{1}\overset{p}{\to}0$ when $m\to\infty$,
whereas $\widehat{u}_{1}-\widetilde{u}_{1}=\widetilde{u}_{1}(\widehat{\boldsymbol{\theta}})-\widetilde{u}_{1}(\boldsymbol{\theta})\overset{p}{\to}0$
when $\widehat{\boldsymbol{\theta}}\overset{p}{\to}\boldsymbol{\theta}$ as $n\to\infty$.
Thus, similar to the result for LMMs in the main paper,
$\widehat{u}_{1}-u_{1}\to 0$ when both $m$ and $n$ increase.

\begin{figure}[tbp]
\centering
\includegraphics[width=\linewidth]{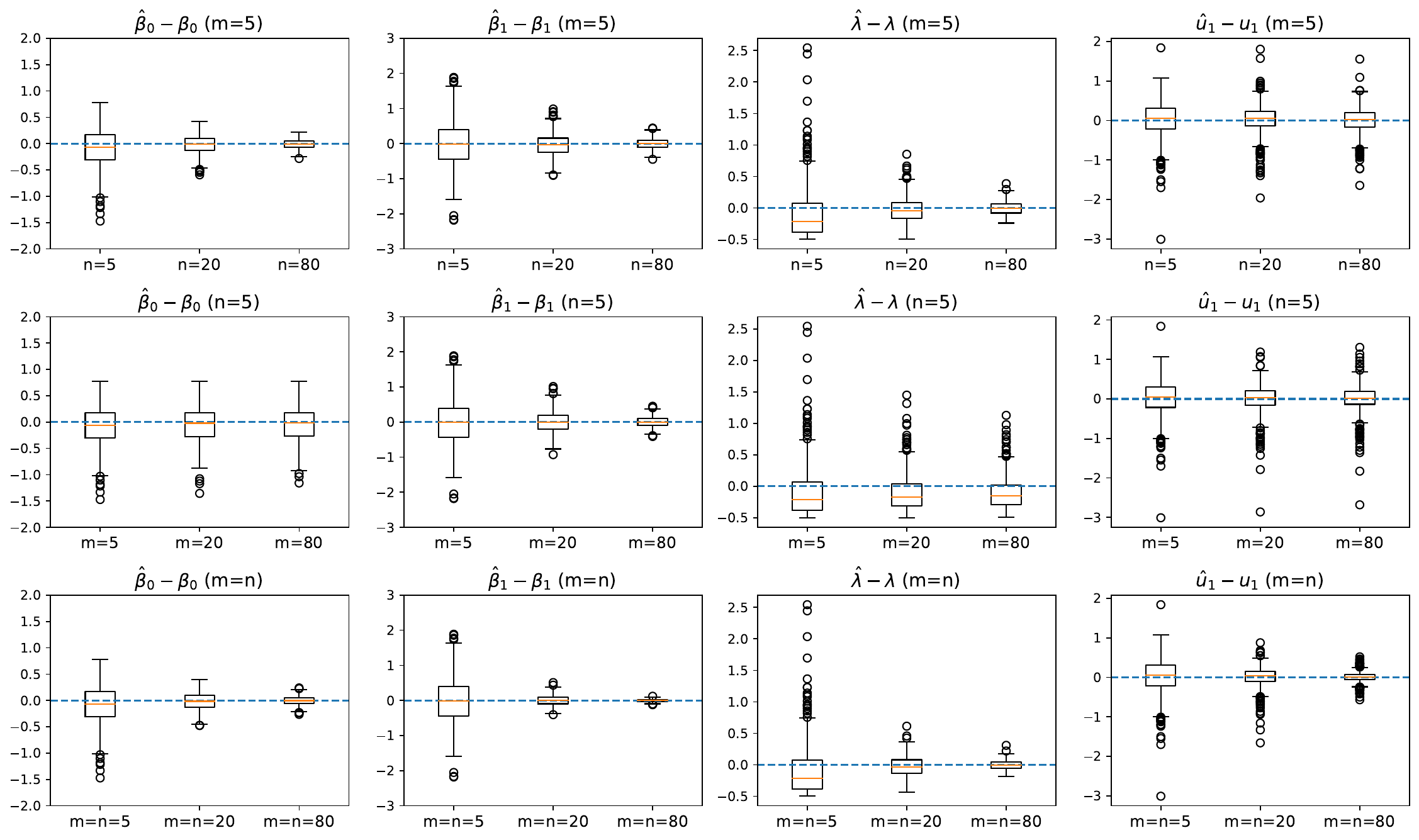}
\caption{Box-plots for the estimation errors of fixed and random unknowns in Poisson-gamma HGLM.\label{fig:consistency_pg}}
\end{figure}

\section{BUP for the conditional mean}
\label{app:BUP}

Suppose that $\widetilde{\mathbf{v}}=\textrm{E}(\mathbf{v}|\mathbf{y})$ 
is the BUP of a random unknown $\mathbf{v}$
and consider a plug-in predictor $g(\widetilde{\mathbf{v}})$
for an arbitrary non-linear transformation $g(\mathbf{v})$.
Then, asymptotically the predictor $g(\widetilde{\mathbf{v}})$ 
achieves the BUP property for any scale $g(\mathbf{v})$ 
such that $\textrm{E}\{|g(\mathbf{v})||\mathbf{y}\}<\infty$.
However, it is no longer the exact BUP in general:
$$
g(\widetilde{\mathbf{v}})=g(\textrm{E}(\mathbf{v}|\mathbf{y}))
\neq \textrm{E}(g(\mathbf{v})|\mathbf{y}).
$$
As the conditional mean $\boldsymbol{\mu}=\textrm{E}(\mathbf{y}|\mathbf{v})$ 
is also a function of $\boldsymbol{\theta}$ and $\mathbf{v}$, 
it could often be preferred to use a specific scale of $\mathbf{v}$,
leading to the BUP for the conditional mean.
Consider an HGLM with the linear predictor, 
\begin{equation*}
\eta_{ij}=g(\mu_{ij})=\mathbf{x}_{ij}^{\intercal}\boldsymbol{\beta}+v_{i},
\end{equation*}
and suppose that we want to find a predictor $\widetilde{v}_{i}$ 
that gives the exact BUP of each conditional mean $\mu_{ij}$ in small samples, 
\begin{equation}
\label{eq:BUP-mu}
\widetilde{\mu}_{ij}
=g^{-1}(\mathbf{x}_{ij}^{\intercal}\boldsymbol{\beta}+\widetilde{v}_{i})
=\textrm{E}(\mu_{ij}|\mathbf{y}).
\end{equation}
We have shown that such scales $\widetilde{v}_{i}$ exist 
in LMMs and Poisson-gamma HGLMs.
However, the following theorem
shows that such scale $\widetilde{v}_{i}$ may not always exist,
for example, in binomial HGLMs with the logit link.

\begin{theorem}
\label{thm:bup_mean} 
Suppose that $\boldsymbol{\beta}\neq \mathbf{0}$ and $\mathbf{x}_{ij}^{\intercal}\boldsymbol{\beta}$ can freely move on $\mathbb{R}$.
Given $\boldsymbol{\theta}$, 
there exists $\widetilde{v}_{i}$ satisfying \eqref{eq:BUP-mu} 
if and only if there exists a constant $c_{i}\in \Omega_{v}$ such that 
\begin{equation}
\mathrm{E}\left( g_{(k)}^{-1}(v_{i})\Big|\mathbf{y}\right)
=g_{(k)}^{-1}(c_{i})\quad \textrm{for all }k=0,1,2,...  \label{eq:iff_mu_bup}
\end{equation}
where $g_{(k)}^{-1}(\cdot)$ is the $k$-th order derivative of inverse link $g^{-1}(\cdot)$.
\end{theorem}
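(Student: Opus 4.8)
The plan is to first reduce the identity \eqref{eq:BUP-mu} to a statement about a single real variable. Fix $i$ and $\boldsymbol{\theta}$, and write $a=\mathbf{x}_{ij}^{\intercal}\boldsymbol{\beta}$. Since, by hypothesis, $\boldsymbol{\beta}\neq\mathbf{0}$ and $a$ can freely move on $\mathbb{R}$ as $j$ and the covariates vary, and since $\mathrm{E}(\mu_{ij}\mid\mathbf{y})=\mathrm{E}\big(g^{-1}(a+v_i)\mid\mathbf{y}\big)$, asking for one $\widetilde{v}_i$ that satisfies \eqref{eq:BUP-mu} for every such $j$ is, by continuity, the same as asking for a constant $\widetilde{v}_i\in\Omega_v$ with
\begin{equation*}
\phi(a):=\mathrm{E}\!\left(g^{-1}(a+v_i)\,\middle|\,\mathbf{y}\right)=g^{-1}(a+\widetilde{v}_i)\qquad\text{for all }a\in\mathbb{R}.
\end{equation*}
So the theorem amounts to characterizing when the function $\phi$ coincides with a shift $\psi_c(a):=g^{-1}(a+c)$ of the inverse link for some $c\in\Omega_v$.

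For the ``only if'' direction, suppose such a $\widetilde{v}_i$ exists, so $\phi(a)=g^{-1}(a+\widetilde{v}_i)$ on $\mathbb{R}$. I would differentiate this identity $k$ times with respect to $a$; interchanging differentiation with the conditional expectation is justified by dominated convergence because, under the relevant regularity (which holds, e.g., for the logit link, where $g^{-1}$ and all of its derivatives are bounded), $g^{-1}_{(k)}(a+v_i)$ is dominated by an integrable function locally in $a$. This yields $\mathrm{E}\big(g^{-1}_{(k)}(a+v_i)\mid\mathbf{y}\big)=g^{-1}_{(k)}(a+\widetilde{v}_i)$ for each $k$, and evaluating at $a=0$ gives exactly \eqref{eq:iff_mu_bup} with $c_i=\widetilde{v}_i$.

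For the ``if'' direction, assume \eqref{eq:iff_mu_bup} holds for some $c_i\in\Omega_v$; I claim $\widetilde{v}_i=c_i$ works. The key step is that $\phi$ is real-analytic on all of $\mathbb{R}$ with $\phi^{(k)}(a)=\mathrm{E}\big(g^{-1}_{(k)}(a+v_i)\mid\mathbf{y}\big)$. This follows from a Cauchy-type estimate: the inverse link extends to a bounded holomorphic function on a strip $\{\,|\operatorname{Im}z|\le\rho\,\}$ (for the logit link, $g^{-1}(z)=1/(1+e^{-z})$ is holomorphic and bounded there for any $\rho<\pi$), so $|g^{-1}_{(k)}(x)|\le M\,k!/\rho^{k}$ uniformly over real $x$, whence $|\phi^{(k)}(a)|\le M\,k!/\rho^{k}$ uniformly in $a$, so $\phi$ has a convergent Taylor expansion of radius $\ge\rho$ at every point. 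By \eqref{eq:iff_mu_bup}, $\phi$ and $\psi_{c_i}$ have the same Taylor coefficients at $a=0$; since both are real-analytic on the connected set $\mathbb{R}$, the identity theorem for real-analytic functions gives $\phi\equiv\psi_{c_i}$, i.e. $\widetilde{v}_i=c_i$ satisfies \eqref{eq:BUP-mu}.

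The main obstacle is precisely this analytic regularity underlying both directions: justifying term-by-term differentiation under the conditional expectation to all orders, and upgrading ``matching all derivatives at one point'' to ``equal functions on $\mathbb{R}$''; this is where the structure of the link enters, and why the criterion must be checked link-by-link (the identity and log links satisfy it trivially, consistent with the LMM and Poisson--gamma examples). As a by-product, for the logit link \eqref{eq:iff_mu_bup} at $k=0,1$ forces $\mathrm{E}\big(g^{-1}(v_i)\mid\mathbf{y}\big)=g^{-1}(c_i)$ together with $\mathrm{E}\big(g^{-1}(v_i)\{1-g^{-1}(v_i)\}\mid\mathbf{y}\big)=g^{-1}(c_i)\{1-g^{-1}(c_i)\}$; since $t\mapsto t(1-t)$ is strictly concave, Jensen's inequality forces $v_i\mid\mathbf{y}$ to be degenerate, which fails for genuine random effects, so no such $\widetilde{v}_i$ exists --- recovering the stated example.
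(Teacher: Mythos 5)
Your proof is correct and takes essentially the same route as the paper: the paper proves the ``only if'' direction by repeatedly differentiating $\mathrm{E}\left(g^{-1}(v_i+t)\mid\mathbf{y}\right)=g^{-1}(c+t)$ under the conditional expectation (by induction on difference quotients) and setting $t=0$, and proves the ``if'' direction by Taylor-expanding $g^{-1}(v_i+t)$ in $t$, taking conditional expectations term by term and re-summing to $g^{-1}(c_i+t)$ --- your Cauchy-estimate/identity-theorem packaging of the latter is a cosmetic variant of that re-summation. The only substantive difference is that you state the analyticity and interchange conditions explicitly (bounded holomorphic extension on a strip, dominated convergence), which the paper leaves implicit; note this extra hypothesis is not literally needed for, e.g., the log link, as you yourself acknowledge by saying the regularity must be checked link by link.
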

\begin{proof}
$(\Leftarrow)$ By the Taylor expansion and equation \eqref{eq:iff_mu_bup},
\begin{align*}
\textrm{E} \left[ g^{-1}(\mathbf{x}_{ij}^\intercal\boldsymbol{\beta}  + v_i) |\mathbf{y} \right] 
&= \textrm{E} \left[ \sum_{k=1}^{\infty} \frac{g^{-1}_{(k)}(v_i)}{k!}
\left(\mathbf{x}_{ij}^\intercal\boldsymbol{\beta} \right)^k \Bigg|\mathbf{y}  \right]
\\
&= \sum_{k=1}^{\infty} \frac{\textrm{E} \left(g^{-1}_{(k)}(v_i) |\mathbf{y} 
\right)}{k!} \left(\mathbf{x}_{ij}^\intercal\boldsymbol{\beta} \right)^k 
\\
&= \sum_{k=1}^{\infty} \frac{g^{-1}_{(k)}(c_i)}{k!} \left(\mathbf{x}_{ij}^\intercal\boldsymbol{\beta} \right)^k 
= g^{-1}\left(\mathbf{x}_{ij}^\intercal\boldsymbol{\beta} + c_i\right).
\end{align*}
Thus, taking $\widehat{v}_i = c_i$ proves the existence of $\widehat{\mathbf{v}}$ 
which gives the BUP for each $\mu_{ij}$.

\noindent $(\Rightarrow)$ Since the equation \eqref{eq:BUP-mu} hold 
for any value of $\mathbf{x}_{ij}^\intercal\boldsymbol{\beta}  \in \mathbb{R}$, 
substituting $\mathbf{x}_{ij}^\intercal \boldsymbol{\beta} = t$ and $c=\widehat{v}_i$ implies that 
$$
\textrm{E} \left(g^{-1}(v_i+t)|\mathbf{y} \right) 
= g^{-1}(c+t) \quad \forall t \in \mathbb{R}. 
$$
If the equation 
$\textrm{E} \left( g^{-1}_{(k-1)}(v_i+t)|\mathbf{y} \right) 
= g^{-1}_{(k-1)} (c+t)$ holds for all $t \in \mathbb{R}$, then 
\begin{align*}
\textrm{E} \left( g^{-1}_{(k)}(v_i+t)|\mathbf{y}  \right) 
& = \textrm{E} \left[ \lim_{\epsilon \to 0} \frac{g^{-1}_{(k-1)}(v_i+t+\epsilon) 
- g^{-1}_{(k-1)}(v_i+t)}{\epsilon} \Bigg| \mathbf{y}  \right] \\
&= \lim_{\epsilon \to 0} \frac{1}{\epsilon} 
\left[ \textrm{E} \left(g^{-1}_{(k-1)}(v_i+t+\epsilon) | \mathbf{y}  \right) 
- \textrm{E} \left(g^{-1}_{(k-1)}(v_i+t) | \mathbf{y}  \right) \right] \\
&= \lim_{\epsilon \to 0} \frac{1}{\epsilon} 
\left[ g^{-1}_{(k-1)}(c+t+\epsilon) - g^{-1}_{(k-1)}(c+t) \right] \\
&= g^{-1}_{(k)}(c+t).
\end{align*}
By induction, it is proved that for any $k=0, 1, 2, ...,$ 
$$
\textrm{E} \left(g^{-1}_{(k)}(v_i+t)|\mathbf{y}  \right) 
= g^{-1}_{(k)} (c+t)
\quad \forall t \in \mathbb{R}. 
$$
Substituting $t=0$ ends the proof.
\end{proof}

When $g(\cdot)$ is the identity link, 
$g_{(k)}^{-1}(v_{i})=0$ for all $k\geq 1$
and $c_{i}=\textrm{E}(v_{i}|\mathbf{y})$ satisfies the condition. 
When $g(\cdot)$ is the logarithm, 
$g_{(k)}^{-1}(v_{i})=\exp (v_{i})$ for all $k$
and $c_{i}=\log \{\textrm{E}(\exp (v_{i})|\mathbf{y})\}$ satisfies the condition.
Thus, in LMMs and Poisson-gamma HGLMs with the canonical link,
we can have a specific scale of $\mathbf{v}$ to obtain the BUP of $\boldsymbol{\mu}$.
However, in binomial HGLMs with the canonical link, 
we cannot have such a scale:
Consider a binomial HGLM,
\begin{equation*}
\eta_{ij}=\log \left( \frac{p_{ij}}{1-p_{ij}}\right) 
=\mathbf{x}_{ij}^{\intercal}\boldsymbol{\beta}+v_{i},
\end{equation*}
where $p_{ij}=P(Y_{ij}=1|\mathbf{x}_{ij},v_{i})$.
If there exists a predictor satisfying \eqref{eq:BUP-mu},
then there exists a constant $c_{i}$ satisfying \eqref{eq:iff_mu_bup}
by Theorem \ref{thm:bup_mean}.
Note here that $g_{(0)}^{-1}(v_{i})=\{1+\exp(-v_{i})\}^{-1}$ and 
\begin{equation*}
g_{(1)}^{-1}(v_{i})
=\exp (-v_{i})\{1+\exp(-v_{i})\}^{-2}=g_{(0)}^{-1}(v_{i})-\{g_{(0)}^{-1}(v_{i})\}^{2}. 
\end{equation*}
Let $T=g_{(0)}^{-1}(v_{i})$ and $t=g_{(0)}^{-1}(c_{i})$. 
If $\textrm{E}(T)=t$ holds, then $\textrm{E}(T-T^{2})<\textrm{E}(T)-\textrm{E}(T)^{2}=t-t^{2}$ 
by the Jensen's inequality. 
This implies that the equation \eqref{eq:iff_mu_bup} cannot be satisfied 
simultaneously for both $k=0$ and $k=1$. 
Thus, with the logit link,
BUP property for the conditional mean is only available asymptotically,
which can be provided by the MHLEs.

\section{Details for exponential-exponential HGLM (Example 3)}
\label{app:details}

\subsection{Tightness of GCRLB}
Consider the exponential-exponential HGLM with $n=1$ in Example 3.
For given $\theta$, the MHLE of $u$ is the BUP,
\begin{equation*}
\widetilde{u}=\exp (\widetilde{v})
=\exp \left[ \argmax_{v}h(\theta,v)\right] 
=\frac{m+1}{\theta +m\bar{y}}=\textrm{E}(u|\mathbf{y}), 
\end{equation*}
which leads to 
\begin{equation*}
\textrm{E}(u-\widetilde{u}|\mathbf{y})=0
\quad \textrm{and}\quad 
\textrm{Var}(u-\widetilde{u}|\mathbf{y})
=\frac{m+1}{(\theta +m\bar{y})^{2}}. 
\end{equation*}
Here, the marginal variance $\textrm{Var}(u-\widetilde{u})$ is
\begin{align*}
\textrm{Var}(u-\widetilde{u})
&=\textrm{E}\{\textrm{Var}(u|\mathbf{y})\}
=\textrm{E}\left[ \frac{m+1}{(\theta +m\bar{y})^{2}}\right] 
=\frac{2\theta^{-2}}{m+2}
=O(1/m),
\end{align*}
which implies that for given $\theta$,
$\widetilde{u}$ is a consistent predictor of a random unknown $u$. 
However, the GCRLB becomes the Bayesian CRLB, 
which may not be tight in small samples \citep{van07}: 
\begin{equation*}
\textrm{Var}(u-\widetilde{u})
=\frac{2\theta^{-2}}{m+2}
\geq \textrm{E}\left[ \frac{\partial u}{\partial v}\right]^{2}
\textrm{E}\left[ -\frac{\partial^{2}h(\theta,v)}{\partial v^{2}}\right]^{-1}
=\frac{\theta^{-2}}{m+1}. 
\end{equation*}
Here, for any $\theta>0$,
the Bayesian CRLB cannot be achieved
by the BUP $\widetilde{u}=\textrm{E}(u|\mathbf{y})$ in finite samples.

Joint maximization of $h(\theta,v)$ leads to the MHLEs 
$\widehat{\theta}=\bar{y}$ and $\widehat{v}= -\log \bar{y}$. 
It is worth noting that 
$\widehat{u}=\exp(\widehat{v})=1/\bar{y}$
is a consistent predictor of $u$. 
Here, $\textrm{Var}(u-\widehat{u})$ can be decomposed as 
\begin{equation*}
\textrm{Var}(u-\widehat{u})
=\textrm{Var}(u-\widetilde{u}(\theta))
+\textrm{Var}(\widetilde{u}(\theta)-\widetilde{u}(\widehat{\theta})),
\end{equation*}
and the GCRLB can be decomposed by block
matrix inversion, 
\begin{align*}
\mathcal{Z}\mathcal{I}(\theta)^{-1}\mathcal{Z}^{\intercal}
&=\frac{\theta^{-2}}{m+1}+\frac{\theta^{-2}}{m(m+1)}=\frac{\theta^{-2}}{m}.
\end{align*}
The first term $\theta^{-2}/(m+1)$ is the Bayesian CRLB,
which cannot be achieved by the BUP as derived above. 
If $\widehat{\theta}-\theta=o_{p}(1)$ and asymptotically normal, 
the delta method would lead to
\begin{equation*}
\textrm{Var}(\widetilde{u}(\theta)-\widetilde{u}(\widehat{\theta}))
\approx 
\left( \frac{\partial \widetilde{u}(\theta)}{\partial \theta}\right)^{2}
\textrm{Var}(\widehat{\theta}-\theta)
=\frac{\theta^{-2}}{m(m+1)},
\end{equation*}
which achieves the bound $\theta^{-2}/\{m(m+1)\}$.
However, in this example, $\widehat{\theta}-\theta=O_{p}(1)$ and 
$\textrm{Var}(\widehat{\theta}-\theta)=\infty $ lead to 
\begin{equation*}
\textrm{Var}(\widetilde{u}(\theta)-\widetilde{u}(\widehat{\theta}))
=\frac{\theta^{-2}(13m^{2}-10m)}{(m-2)(m+2)(m-1)^{2}}
\geq \frac{\theta^{-2}}{m(m+1)}. 
\end{equation*}
Thus, as $m$ grows, the MHLE 
$\widehat{u}=\widetilde{u}(\widehat{\theta})$ 
is asymptotically the BUP
$\widetilde{u}=\widetilde{u}(\theta)$.
However, the GCRLB is not tight: 
\begin{equation*}
\textrm{Var}(u-\widehat{u})
=\textrm{Var}(u-\widetilde{u}(\theta))
+\textrm{Var}(\widetilde{u}(\theta)
-\widetilde{u}(\widehat{\theta}))
\geq \frac{\theta^{-2}}{m}
=\mathcal{Z}\mathcal{I}(\theta)^{-1}\mathcal{Z}^{\intercal},
\end{equation*}
and the LHS becomes infinity for $m\leq 2$.
Further study to find a tighter lower bound for such cases would be
interesting, as it was in the Bayesian literature 
\citep{ziv69, bobrovsky76, weiss85}.

\subsection{Reparameterization of fixed parameters}
Here, we consider a reparameterization $\xi=1/\theta$, 
then the MLE $\widehat{\xi}=1/\bar{y}=\widehat{u}$ becomes 
asymptotically the best unbiased estimator of $\xi$. 
For inference, the scale of fixed parameter is important 
when the MLE is not consistent. 
Reparameterization $\xi=1/\theta$ gives 
\begin{equation*}
f_{\xi}(u)=\frac{1}{\xi}\exp \left( -\frac{u}{\xi}\right), 
\end{equation*}
leading to the classical log-likelihood 
\begin{equation*}
\ell (\xi )=-\log \xi -(m+1)\log (\xi^{-1}+m\bar{y})+\log \Gamma(m+1), 
\end{equation*}
where $\Gamma(\cdot)$ is the gamma function. Then, we have the
h-likelihood 
\begin{equation*}
h(\xi,v)=-\log \xi -e^{v}(\xi^{-1}+m\bar{y})+v(m+1)-(m+1)\log
(m+1)+\log \Gamma(m+1)+(m+1), 
\end{equation*}
with the expected h-information matrix 
\begin{equation*}
\mathcal{I}(\xi )=\textrm{E}\{I(\xi,v)\}
=\textrm{E}\left[ \frac{-\partial^{2}h(\xi,v)}{\partial (\xi,v)\partial (\xi,v)^{\intercal}}\right] 
=\begin{bmatrix}
\xi^{-2} & -\xi^{-1} \\ 
-\xi^{-1} & m+1
\end{bmatrix}. 
\end{equation*}
The observed h-information yields variance estimators, 
\begin{equation*}
\widehat{\textrm{Var}}
\begin{bmatrix}
\xi -\widehat{\xi} \\ 
v-\widehat{v}
\end{bmatrix}
={I}(\widehat{\xi},\widehat{v})^{-1}=
\begin{bmatrix}
\bar{y}^{2} & -\bar{y} \\ 
-\bar{y} & m+1
\end{bmatrix}
^{-1}=\frac{1}{m}
\begin{bmatrix}
(m+1)\bar{y}^{-2} & \bar{y}^{-1} \\ 
\bar{y}^{-1} & 1
\end{bmatrix}. 
\end{equation*}
The h-likelihood and the classical likelihood give the same estimation for $\xi$, 
and their information matrices give the same variance estimator. 
The MHLE of $\xi$ is the MLE $\widehat{\xi}=1/\bar{y}$, 
which is an asymptotically unbiased estimator of $\xi$ with 
\begin{align*}
& \textrm{E}(\widehat{\xi})
=\textrm{E}(1/\bar{y})
=\textrm{E}(\textrm{E}(1/\bar{y}|u))
=\frac{m\xi }{m-1}\to \xi, \\
& \textrm{Var}(\widehat{\xi})
=\textrm{Var}(1/\bar{y})
=\textrm{E}(\textrm{Var}(1/\bar{y}|u))+\textrm{Var}(\textrm{E}(1/\bar{y}|u))
=\frac{m^{3}\xi^{2}}{(m-1)^{2}(m-2)}
\to \xi^{2} < \infty,
\end{align*}
as $m\to \infty$. Even though the MHLE (MLE) of $\xi$ is not consistent,
it is still asymptotically the best unbiased estimator that achieves the GCRLB, 
\begin{align*}
\textrm{Var}(\xi -\widehat{\xi})=\frac{\xi^{2}m^{3}}{(m-1)^{2}(m-2)}
& =\frac{\xi^{2}(m+1)}{m}\left[ 1+O\left( \frac{1}{m}\right) \right] \\
& \geq (1,0)\ \mathcal{I}(\xi,v)^{-1}\ (1,0)^{\intercal}
=\frac{\xi^{2}(m+1)}{m}.
\end{align*}
However, $\textrm{Var}(\widehat{\theta})$ cannot be obtained 
by using the delta method on $\textrm{Var}(\widehat{\xi})$, 
as $\textrm{Var}(\widehat{v}-v)$ cannot be obtained 
from $\textrm{Var}(\widehat{u}-u)$ in missing data problem.
In consequence, when $\widehat{\theta}-\theta=O_{p}(1)$, 
validity of statistical inference depends upon the parameterization of fixed parameters.
Thus, the scale is important for using the Hessian matrix in small samples.

\subsection{Reparameterization of random effects}
Here, we consider another scale of the random parameter 
for constructing the h-likelihood. 
Though the $u$-scale is not weak canonical, 
it is Bartlizable when $m\geq 2$ from Lemma 3.2. 
Thus, the h-likelihood with the $u$-scale can be defined as 
\begin{align*}
h(\xi,u)& =\ell_{e}(\xi,u)+c(\xi ;y) \\
&=-\log \xi -e^{v}(\xi^{-1}+m\bar{y})
-\log (\xi^{-1}+m\bar{y})+m(v+1-\log m)+\log \Gamma(m+1),
\end{align*}
which gives the MLE $\widehat{\xi}=\bar{y}$ 
and the MHLE $\widehat{v}'=-\log \bar{y}+\log \{m/(m+1)\}$. 
Here the observed h-information leads to 
\begin{equation*}
\widehat{\textrm{Var}}
\begin{bmatrix}
\xi -\widehat{\xi} \\ 
v-\widehat{v}'
\end{bmatrix}
=I_{2}(\widehat{\xi},\widehat{v}')^{-1}=\frac{1}{m}
\begin{bmatrix}
(m+1)\bar{y}^{-2} & \bar{y}^{-1} \\ 
\bar{y}^{-1} & \frac{m+2}{m+1}
\end{bmatrix}
=\widehat{\textrm{Var}}
\begin{bmatrix}
\xi -\widehat{\xi} \\ 
v-\widehat{v}
\end{bmatrix}
\cdot \left[ 1+O\left( \frac{1}{m}\right) \right]. 
\end{equation*}
Thus, both $h(\xi,v)$ and $h(\xi,u)$ yield asymptotically correct
inferences. To compare the $v$-scale and $u$-scale, we investigate their
predictions for the conditional mean $\mu =\textrm{E}(y_{j}|u)=1/u$. 
For given $\xi$, the predictor $\widetilde{v}=\log (m+1)-\log (\xi^{-1}+m\bar{y})$ leads to 
\begin{equation*}
\widetilde{\mu}=\widetilde{u}^{-1}=\frac{\xi^{-1}+m\bar{y}}{m+1}, 
\end{equation*}
whereas the predictor $\widetilde{v}'=\log m-\log (\xi^{-1}+m\bar{y})$ leads to 
\begin{equation*}
\widetilde{\mu}'=\widetilde{u}'^{-1}
=\frac{\xi^{-1}+m\bar{y}}{m}
=\textrm{E}(\mu |y). 
\end{equation*}
Note here that the h-likelihood with $u$-scale yields the BUP of $\mu$ in small samples, 
\begin{equation*}
\textrm{E}(\mu |\mathbf{y})=\frac{\xi^{-1}+m\bar{y}}{m}
=\widetilde{\mu}'=\exp (-\widetilde{v}'), 
\end{equation*}
and 
\begin{equation*}
\textrm{Var}(\mu |\mathbf{y})
=\textrm{Var}(\mu -\widetilde{\mu}'|\mathbf{y})
=\frac{(\xi^{-1}+m\bar{y})^{2}}{m^{2}(m-1)}
=\left[ \frac{-\partial^{2}h(\xi,u)}{\partial \mu^{2}}\right]
_{\mu =\widetilde{\mu}'}^{-1}\left[ 1+O\left( \frac{1}{m}\right) \right].
\end{equation*}
Thus, for given $\xi$, the Hessian matrix provides a consistent estimator of 
$\textrm{Var}(\mu|\mathbf{y})$ as $m\to\infty$.
Furthermore, it can be shown that 
\begin{equation}
\textrm{E}[(\mu -\widetilde{\mu})^{2}-(\mu -\widetilde{\mu}')^{2}|\mathbf{y}]
=\frac{(\xi^{-1}+m\bar{y})^{2}}{m^{2}(m+1)^{2}}>0.
\label{ineq:bayarri}
\end{equation}
Thus, $h(\xi,u)$ would provide a better prediction for $\mu $ than $h(\xi,v)$. 

\subsection{When $m=n=1$}
As described in Section 5.3 of the main paper,
when $N=m=n=1$, Example 3 becomes 
\citeauthor{bayarri88}'s \citeyearpar{bayarri88} example.
Here, $u$-scale is no longer Bartlizable, as shown by 
\begin{equation*}
\textrm{E}\left[ \textrm{E}\left[ \frac{\partial^{2}h(\theta,u)}{\partial u^{2}}
+\left( \frac{\partial h(\theta,u)}{\partial u}\right)^{2}\Bigg|y\right] \right]
=\textrm{E}\left[ (\theta^{-1}+y)^{2}-2\right] \neq 0,
\end{equation*}
but $v$-scale is still Bartlizable 
to give $\widehat{\xi}=1/y$ and $\widehat{v}=-\log y$. 
However, we have $\textrm{E}(\widehat{\theta})=\infty$, 
$\textrm{Var}(\widehat{\theta})=\infty$,
$\textrm{E}(\widehat{\xi})=\infty$, 
$\textrm{Var}(\widehat{\xi})=\infty$,
$\textrm{Var}(\widehat{u}-\widetilde{u})=\infty$,
and the asymptotic normality cannot be applied to MHLEs 
of both fixed and random parameters,
which makes it difficult to provide meaningful inferences.

\section{Confidence as an extended likelihood}
\label{app:confidence}

There has been a debate about whether confidence is a likelihood. 
Confidence and p-value are not likelihoods 
because they depend on the experiment scheme \citep{lavine22}. 
\citet{pawitan21, pawitan22} showed that 
the confidence of CIs for fixed unknowns is an extended likelihood.
In this section, we show that
the confidence of PIs for random unknowns is also an extended likelihood.

First, consider the confidence for $\boldsymbol{\theta}$. 
Let $\textrm{CI}(\textit{\textbf{Y}})$ be a CI procedure 
for the fixed parameter of interest $\xi=\xi(\boldsymbol{\theta})$, 
which could be a subset of $\boldsymbol{\theta}$ or a transformation of the parameters in $\boldsymbol{\theta}$. 
\citet{bjornstad96} established the EPL for the parameter of interest, 
which is an aim of statistical inference. 
Now, let us define a binary unobserved random variable,
$$
W = W(\xi, \textit{\textbf{Y}}) = I(\xi(\boldsymbol{\theta}) \in \textrm{CI}(\textit{\textbf{Y}})),
$$
which an indicator function whether the CI contains $\xi$ or not.
For $w=W(\xi, \mathbf{y})$ with an observed $\mathbf{y}$,
\citet{lavine22} showed that
\begin{align*}
\begin{cases}
L_{e}(\boldsymbol{\theta}, w=0; \mathbf{y}) = L(\boldsymbol{\theta}; \mathbf{y}) \{ 1 - I(\xi \in \textrm{CI}(\mathbf{y})) \} \\
L_{e}(\boldsymbol{\theta}, w=1; \mathbf{y}) = L(\boldsymbol{\theta}; \mathbf{y}) I(\xi \in \textrm{CI}(\mathbf{y}))
\end{cases}
\end{align*}
is an extended likelihood of \citet{bjornstad96} 
that exploits the full data evidence for $(\boldsymbol{\theta}, w)$.
However, using $L_{e}(\boldsymbol{\theta}, w=1; \mathbf{y})$ to construct a CI procedure would not be feasible, 
as it cannot be determined whether the observed interval $\textrm{CI}(\mathbf{y})$ 
contains the unknown true value $\xi_{0}$. 
Notably, the extended likelihood can be factorized as follows:
for $i = 0, 1$,
\begin{align*}
L_{e}(\boldsymbol{\theta}, w=i; \mathbf{y}) 
= L_{e}(w=i; \mathbf{y}) f_{\boldsymbol{\theta}}(\mathbf{y}|w=1)
= L_{e}(w=i; \mathbf{y}) \cdot \frac{f_{\boldsymbol{\theta}}(\mathbf{y}) I(w=i)}{P_{\boldsymbol{\theta}}(W=i)}
\end{align*}
\citet{pawitan21} showed that
$$
L_{e}(w=1; \mathbf{y}) = P_{\boldsymbol{\theta}}(\xi \in \textrm{CI}(\textit{\textbf{Y}})),
$$
where the LHS is a marginal extended likelihood of $w$
and the RHS is the coverage probability of the corresponding CI procedure
$\textrm{CI}(\textit{\textbf{Y}})$.
The confidence feature
$C(\xi_0 \in \textrm{CI}(\mathbf{y})) = P_{\boldsymbol{\theta}_0}(\xi_0 \in \textrm{CI}(\textit{\textbf{Y}}))$
implies that the confidence of CIs for fixed unknown can be interpreted as an extended likelihood,
$$
L_{e}(w_0=1; \mathbf{y}) = C(\xi_0 \in \textrm{CI}(\mathbf{y})) 
= \int_{\textrm{CI}(\mathbf{y})} c(\xi; \mathbf{y}) d\xi,
$$
where $w_{0}=W(\xi_{0};\mathbf{y})$.
However, the extended likelihood $L_{e}(w=i; \mathbf{y})$ may not contain the full data evidence
required to satisfy the ELP \citep{lavine22},
since the remaining term $f_{\boldsymbol{\theta}}(\mathbf{y}) I(w=i)$ would carry partial evidence for $w$.

Similar arguments apply to random parameters. 
Let $\textrm{PI}(\textit{\textbf{Y}})$ be a PI procedure 
for the random unknown of inferential interest, $\xi = \xi(\mathbf{v})$, 
which could be a subset of $\mathbf{v}$ or a transformation of the random parameters in $\mathbf{v}$. 
Then, as before, we can define a binary unobserved random variable,
$$
W = W(\xi, \textit{\textbf{Y}}) = I(\xi(\mathbf{v}) \in \textrm{CI}(\textit{\textbf{Y}})).
$$
For a realized value $w=W(\xi, \mathbf{y})$,
the extended likelihood with full data evidence for $(\boldsymbol{\theta},\mathbf{v},w)$
can be factorized as follows: for $i = 0, 1$,
\begin{align*}
L_{e}(\boldsymbol{\theta}, \mathbf{v}, w=i; \mathbf{y})
= L_{e}(w=i; \mathbf{y}) f_{\boldsymbol{\theta}}(\mathbf{y}, \mathbf{v}|w=i) 
= L_{e}(w=i; \mathbf{y}) \cdot \frac{f_{\boldsymbol{\theta}}(\mathbf{y},\mathbf{v}) I(w=i)} {P_{\boldsymbol{\theta}}(W=i)}.
\end{align*}
This leads to
$$
L_{e}(w=1) = P(\xi \in \textrm{PI}(\textit{\textbf{Y}})),
$$
where the LHS is a marginal extended likelihood of $w$
and the RHS is the coverage probability of the corresponding PI procedure.
Thus, the confidence feature for random unknowns implies that 
$$
L_{e}(w_0=1; \mathbf{y}) = C(\xi_0 \in \textrm{PI}(\mathbf{y})) 
= \int_{\textrm{PI}(\mathbf{y})} c(\xi; \mathbf{y}) d\xi,
$$
indicating that the confidence of PIs for random unknowns
can also be interpreted as an extended likelihood.
Using two illustrative examples, 
Section 6 of the main paper demonstrates the existence of such $c(\xi; \mathbf{y})$. 
Section 7 further introduces approximate h-confidences, 
which provide asymptotically correct PIs for random unknowns in multi-parameter cases.

There remain interesting issues that call for deeper examination. 
For instance, the parameter of interest could be $\xi(\boldsymbol{\theta}, \mathbf{v})$, 
which depends on both the fixed parameters $\boldsymbol{\theta}$ and the random parameters $\mathbf{v}$. 
However, integrating the CD or PD for dimension reduction is generally discouraged, 
as marginalization yields inappropriate confidence levels in multi-parameter cases 
\citep{bartlett36, bartlett39, bartlett65, pedersen78}, necessitating further studies.

\section{Improving the ELA for h-likelihood}
\label{app:ela}

In this section, we study the choice of the conjugate distribution $q(\mathbf{v})$, 
which improves the computational efficiency of the approximation. 
In Poisson-gamma HGLM, the use of the conjugate gamma distribution, 
$q(\mathbf{v})=f_{\boldsymbol{\theta}}(\mathbf{v}|\mathbf{y})$, 
results in the h-likelihood 
$h_{B}(\boldsymbol{\theta},\mathbf{v})=h(\boldsymbol{\theta},\mathbf{v})$ 
even when $B=1$. 
In contrast, the use of normal distribution requires large $B$ 
to obtain the accurate h-likelihood.
Now, as an example, we consider a Poisson-gamma-gamma HGLM: 
$y_{ij}|u_{1i},u_{2ij}\sim \textrm{Poisson}(\mu_{ij})$, 
$$
\mu_{ij}=\textrm{E}(y_{ij}|u_{1i},u_{2ij})
=\exp \left( \mathbf{x}_{ij}^{\intercal}\boldsymbol{\beta}\right) \cdot u_{1i}u_{2ij}, 
$$
where $u_{1i}\sim \textrm{Gamma}(\lambda_{1}^{-1},\lambda_{1}^{-1})$ 
and $u_{2ij}\sim \textrm{Gamma}(\lambda_{2}^{-1},\lambda_{2}^{-1})$. 
This can be regarded as a NB(negative binomial)-gamma HGLM: 
$y_{ij}|u_{1i}\sim \textrm{NB}(\mu_{ij},\lambda_{2})$,
\begin{equation*}
\mu_{ij}=\textrm{E}(y_{ij}|u_{1i})
=\exp \left( \mathbf{x}_{ij}^{\intercal}\boldsymbol{\beta}\right) \cdot u_{1i}, 
\end{equation*}
where $u_{1i}\sim \textrm{Gamma}(\lambda_{1}^{-1},\lambda_{1}^{-1}).$ 
Let $v_{1i}=\log u_{1i}$ and $v_{2ij}=\log u_{2ij}$, 
then Lemma 3.2 implies that 
$\mathbf{v}_{1}$ and $\mathbf{v}_{2}$ are Bartlizable. 
If we have an explicit form of 
$f_{\boldsymbol{\theta}}(\mathbf{v}_{1},\mathbf{v}_{2}|\mathbf{y})$, then 
\begin{equation*}
h(\boldsymbol{\theta},\mathbf{v}_{1},\mathbf{v}_{2})
\equiv \log f_{\boldsymbol{\theta}}(\mathbf{y}|\mathbf{v}_{1},\mathbf{v}_{2})
+\log f_{\boldsymbol{\theta}}(\mathbf{v}_{1},\mathbf{v}_{2})
-\log f_{\boldsymbol{\theta}}(\widetilde{\mathbf{v}}_{1},\widetilde{\mathbf{v}}_{2}|\mathbf{y}), 
\end{equation*}
where $(\widetilde{\mathbf{v}}_{1},\widetilde{\mathbf{v}}_{2})$ are the modes of 
$f_{\boldsymbol{\theta}}(\mathbf{v}_{1},\mathbf{v}_{2}|\mathbf{y})$. 
However, conditional density 
$f_{\boldsymbol{\theta}}(\mathbf{v}_{1},\mathbf{v}_{2}|\mathbf{y})$ 
may not have an explicit form. Instead of using multivariate normal
distributions for Monte Carlo sampling \citep{han22b}, we may use a gamma
distribution. Let 
\begin{equation*}
h_{B}(\boldsymbol{\theta},\mathbf{v}_{1},\mathbf{v}_{2})
=\ell_{e}(\boldsymbol{\theta},\mathbf{v}_{1},\mathbf{v}_{2})
-\ell_{e}(\boldsymbol{\theta},\widetilde{\mathbf{v}}_{1},\widetilde{\mathbf{v}}_{2})
+\log \left[ \frac{1}{B}\sum_{b=1}^{B}
\frac{L_{e}(\boldsymbol{\theta},\mathbf{v}_{1}^{(b)},\mathbf{v}_{2}^{(b)})}
{q_{1}(\mathbf{v}_{1}^{(b)})q_{2}(\mathbf{v}_{2}^{(b)})}\right] . 
\end{equation*}
Note that both $\mathbf{v}_{1}|\mathbf{y},\mathbf{v}_{2}$ 
and $\mathbf{v}_{2}|\mathbf{y},\mathbf{v}_{1}$ follow explicit log-gamma distributions,
while $\mathbf{v}_{1},\mathbf{v}_{2}|\mathbf{y}$ may not have an explicit form. 
Thus, we may use gamma distributions 
$q_{1}\left( \mathbf{v}_{1}\right) 
=f_{\widehat{\boldsymbol{\theta}}}(\mathbf{v}_{1}|\mathbf{y},\widehat{\mathbf{v}}_{2})$ 
and $q_{2}\left( \mathbf{v}_{2}\right) 
=f_{\widehat{\boldsymbol{\theta}}}(\mathbf{v}_{2}|\mathbf{y},\widehat{\mathbf{v}}_{1})$ 
to approximate $f_{\widehat{\boldsymbol{\theta}}}(\mathbf{v}_{1},\mathbf{v}_{2}|\mathbf{y})$ 
by $q_{1}(\mathbf{v}_{1})q_{2}(\mathbf{v}_{2})$ such that 
\begin{align*}
q_{1}\left( \mathbf{v}_{1}\right) 
& =\prod_{i}\frac{\exp \left[ \left(
y_{i+}+\widehat{\lambda}_{1}^{-1}\right) 
\left\{ v_{1i}+\log \left( 
\widehat{\mu}_{i+}^{(1)}+\widehat{\lambda}_{1}^{-1}\right) \right\}
-\left( \widehat{\mu}_{i+}^{(1)}+\widehat{\lambda}_{1}^{-1}\right) \exp
(v_{1i})\right] }{\Gamma \left( y_{i+}+\widehat{\lambda}_{1}^{-1}\right) }
\\
q_{2}\left( \mathbf{v}_{2}\right) & =\prod_{i}\frac{\exp \left[ \left(
y_{ij}+\widehat{\lambda}_{2}^{-1}\right) \left\{ v_{2ij}+\log \left( 
\widehat{\mu}_{ij}^{(2)}+\widehat{\lambda}_{2}^{-1}\right) \right\}
-\left( \widehat{\mu}_{ij}^{(2)}+\widehat{\lambda}_{2}^{-1}\right) \exp
(v_{2ij})\right] }{\Gamma \left( y_{ij}+\widehat{\lambda}_{2}^{-1}\right) }
\end{align*}
where $\widehat{\mu}_{i+}^{(1)}
=\sum_{j}\exp (\mathbf{x}_{ij}^{\intercal}\widehat{\boldsymbol{\beta}}
+\widehat{v}_{2ij})$ and $\widehat{\mu}_{ij}^{(2)}
=\exp (\mathbf{x}_{ij}^{\intercal}\widehat{\boldsymbol{\beta}}+\widehat{v}_{1i})$. 
From this Poisson-gamma-gamma sampling, the convergence of approximate MHLEs 
is expected to be more rapid than that from Poisson-normal-normal sampling of \citet{han22b}, 
because $q_{1}(\mathbf{v}_{1})q_{2}(\mathbf{v}_{2})$ above 
would be closer to the true predictive likelihood than the multivariate normal distribution. 
However, the Poisson-gamma-gamma sampling still requires sampling of $n(m+1)$ random effects. 
Using the NB-gamma HGLM, we can simplify the sampling process, 
\begin{equation*}
\frac{1}{B}\sum_{b=1}^{B}
\frac{f_{\boldsymbol{\theta}}(\mathbf{y}|\mathbf{v}_{1}^{(b)})
f_{\boldsymbol{\theta}}(\mathbf{v}_{1}^{(b)})}
{q_{1}(\mathbf{v}_{1}^{(b)})}, 
\end{equation*}
where $f_{\boldsymbol{\theta}}(\mathbf{y}|\mathbf{v}_{1}^{(b)})$ 
is the probability function of the NB$(\mu_{ij},\lambda_{2})$ distribution. 
This method only requires the sampling of $n$ number of random effects.

\subsection*{Example S1 (Epilepsy data)}
Epilepsy data \citep{thall90}
is from a clinical trial involving $n=59$ patients with epilepsy. The data
contain $N=236$ observations of seizure counts with $m=4$ repeated measures
from each patient.
We consider a Poisson-gamma HGLM: 
for $i=1,...,n$ and $j=1,...,m$, 
\begin{equation*}
\log \mu_{ij}=\mathbf{x}_{ij}^{\intercal}\boldsymbol{\beta}+\log u_{i}, 
\end{equation*}
where $y_{ij}|u_{i}\sim \textrm{Poisson}(\mu_{ij})$ 
and $u_{i}\sim \textrm{Gamma}(\lambda^{-1},\lambda^{-1})$. 
We construct a simultaneous 95\% PI procedure for all realized values $u_{0i}$, 
based on C3 using the bootstrap method with $B=10^{6}$. 
Figure \ref{fig:pi} shows the
resulting PIs in increasing order. The effects of patients 10, 25, 35, 49,
56 are significantly large ($u_{0i}>1$) and those of patients 15, 16, 17,
38, 41, 52, 57, 58 are significantly small ($u_{0i}<1$), indicating the
presence of heterogeneity among the patients. 
\citet{lee16} identified the same patients (10, 25, 35, 49, 56) to have
significantly large effects ($u_{0i}>1$), which were identified as outliers
in previous studies \citep{thall90, breslow93, ma07}. 
For a certain realized value,
the coverage probability of the PI is very close to the confidence level even for $m=1$.
However, the coverage probabilities of 
the simultaneous PI procedure for all realized values 
can be liberal at extreme values of $\widehat{u}_{0i}$ in both ends,
because $m=4$ is small.

\begin{figure}[tbp]
\centering
\includegraphics[width=\linewidth]{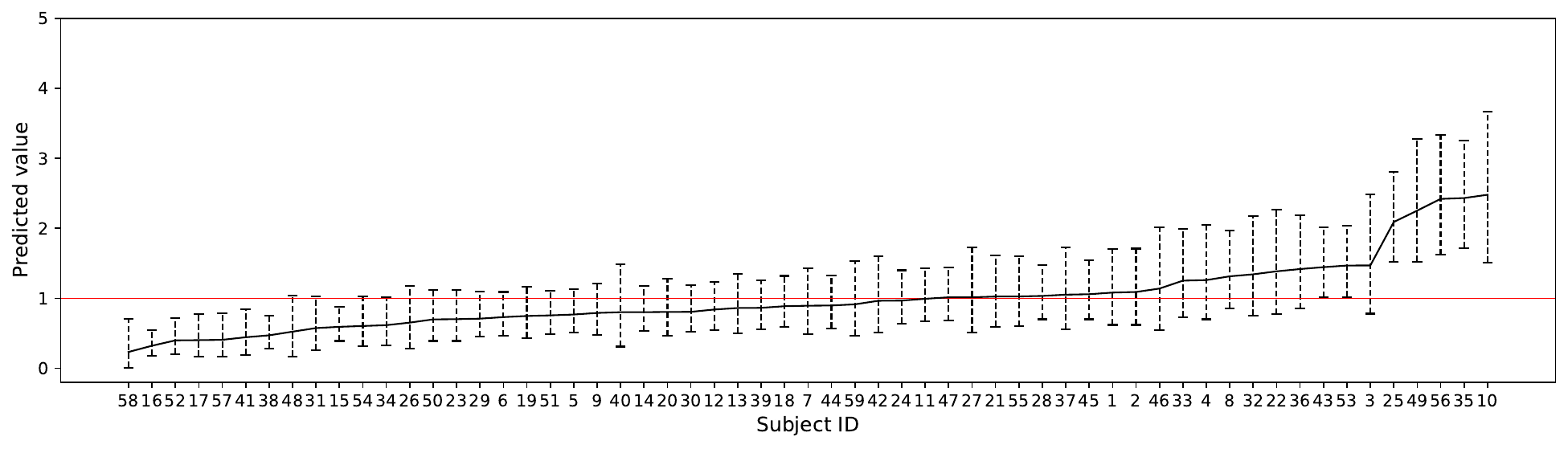}
\caption{95\% PIs of individual random effects of 59
patients, sorted by the predicted values.}
\label{fig:pi}
\end{figure}

We further consider the Poisson-gamma-gamma HGLM 
for the analysis of epilepsy data in Example 5.
We compare the convergence rate of 
Poisson-normal-normal, Poisson-gamma-gamma and NB-gamma Monte Carlo
samplings, using the epilepsy data. Figure \ref{fig:epilepsy} illustrates
the absolute errors between the exact MLEs and approximate MLEs. Exact MLEs
are obtained from NB-gamma sampling with $B=10^{5}$. 
To obtain the box-plots, we perform 100 repetitions for each sampling procedure.
Since the distribution of random effects is not normal,
the LA gives biased estimates for fixed parameters. As $B$ grows, all the
other approximate MLEs converge to the exact MLEs but in different rates.  
MLEs from Poisson-normal-normal sampling exhibit the slowest
convergence, requiring the largest $B$ for accurate approximations.
Advantage of gamma samplings is obvious, especially in estimation of
variance components. MLEs from NB-gamma sampling converge most rapidly and
consistently outperform the LA, even when $B=1$. 
After we obtain MLEs for $\boldsymbol{\theta}$, we compute MHLEs for $\mathbf{v}$.
It is an interesting future research 
to develop more efficient computational method 
when the h-likelihood is not explicit.

\begin{figure}[tbp]
\centering
\includegraphics[width=0.9\linewidth]{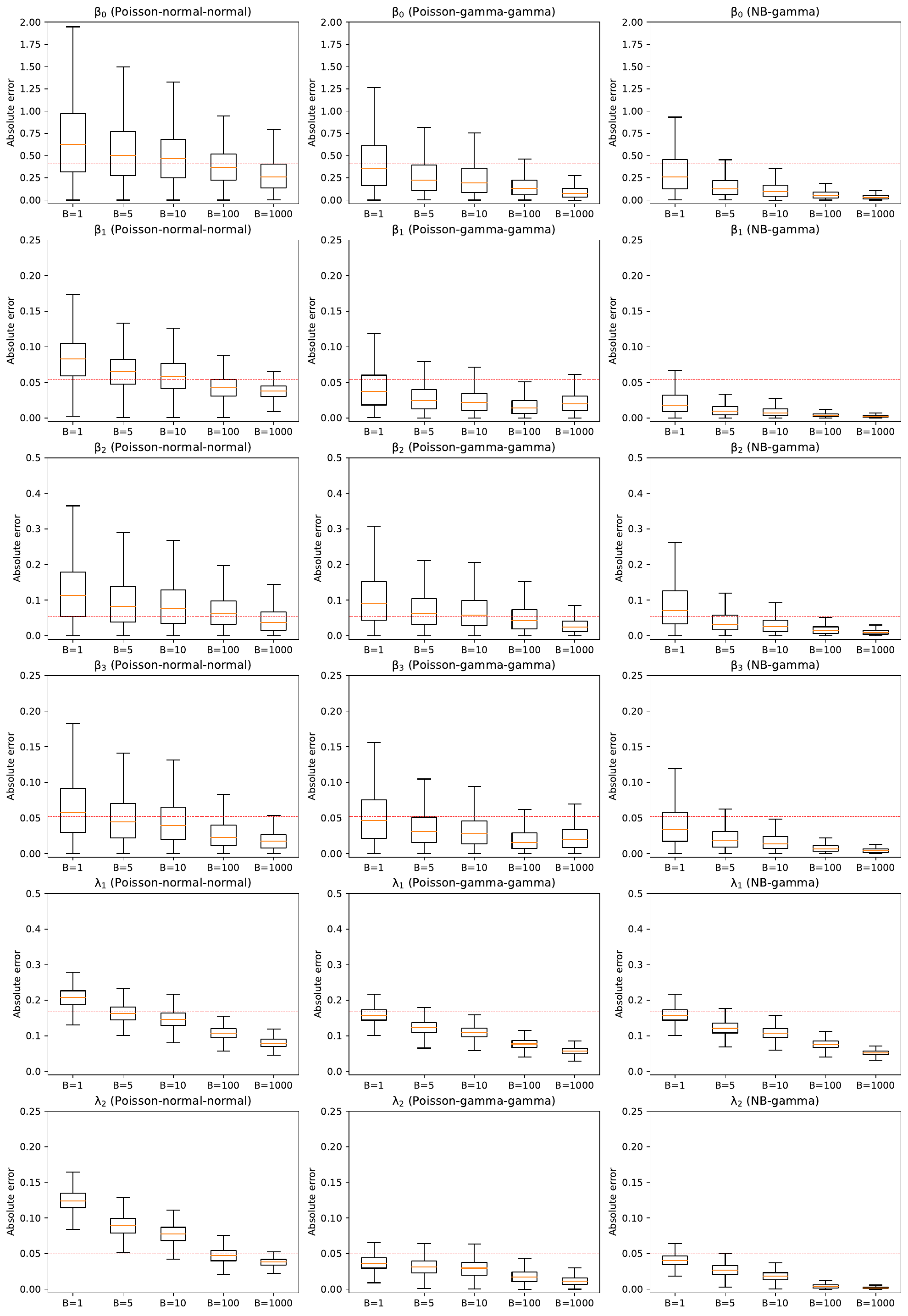}
\caption{Dotted line indicates the absolute error between true MLE and
approximate MLE from Laplace approximation. Each box-plot illustrates the
absolute error between true MLE and approximate MLE from Monte Carlo method.
Poisson-normal-normal sampling, Poisson-gamma-gamma sampling and NB-gamma
sampling are compared.}
\label{fig:epilepsy}
\end{figure}

\section{Proofs}
\label{app:proof}

We assume the following regularity conditions for 
the h-likelihood $h(\boldsymbol{\theta},\mathbf{v};\mathbf{y})$
and all the density functions of $\mathbf{y}$ and $\mathbf{v}$,
$f_{\boldsymbol{\theta}}(\mathbf{y})$, 
$f_{\boldsymbol{\theta}}(\mathbf{v})$, 
$f_{\boldsymbol{\theta}}(\mathbf{y}|\mathbf{v})$, 
and $f_{\boldsymbol{\theta}}(\mathbf{v}|\mathbf{y})$.
\begin{enumerate}
\item[] (R1) There exists an open set $\boldsymbol{\Theta}^{\ast}\subseteq \boldsymbol{\Theta}$ of which the true value is an interior point.
\item[] (R2) Supports are invariant to parameter $\boldsymbol{\theta}$.
\item[] (R3) Continuously differentiable up to third order.
\item[] (R4) Integration and differentiation can be interchanged up to second order.
\item[] (R5) The third derivatives are bounded by some integrable functions.
\item[] (R6) The information matrices are positive definite.
\end{enumerate}
In the absence of random unknowns $\mathbf{v}$,
R1-R6 are required only for the likelihood 
$L(\boldsymbol{\theta};\mathbf{y})=f_{\boldsymbol{\theta}}(\mathbf{y})$,
as commonly required by asymptotic optimality of MLEs for fixed unknowns $\boldsymbol{\theta}$.

\subsection{Proof of Lemma 3.2}
\label{proof:bartlett}

(a) Consider 
$v(u) = \textrm{logit} F_{\boldsymbol{\theta}}(u) = \log F_{\boldsymbol{\theta}}(u) - \log (1-F_{\boldsymbol{\theta}}(u))$ 
where $F_{\boldsymbol{\theta}}(u)$ denotes the cumulative distribution function of $u$, 
then $v(\cdot)$ is an increasing function from $\Omega_u$ to $(-\infty,\infty)$. 
The density function of $v$ is given by 
\begin{align*}
f_{\boldsymbol{\theta}}(v) 
&= f_{\boldsymbol{\theta}}(u) \left| \frac{du}{dv}\right| 
= f_{\boldsymbol{\theta}}(u) \left|\frac{f_{\boldsymbol{\theta}}(u)}{F_{\boldsymbol{\theta}}(u)}
+ \frac{f_{\boldsymbol{\theta}}(u)}{1-F_{\boldsymbol{\theta}}(u)}\right|^{-1} 
= F_{\boldsymbol{\theta}}(u)(1-F_{\boldsymbol{\theta}}(u))
\end{align*}
and its derivative is 
\begin{align*}
f^{\prime}_{\boldsymbol{\theta}}(v) 
&= \frac{du}{dv} \frac{d}{du} \left\{ F_{\boldsymbol{\theta}}(u)(1-F_{\boldsymbol{\theta}}(u)) \right\} 
= F_{\boldsymbol{\theta}}(u) (1-F_{\boldsymbol{\theta}}(u)) (1-2F_{\boldsymbol{\theta}}(u)).
\end{align*}
Since $F_{\boldsymbol{\theta}}(u)$ is cumulative distribution function that
increases from 0 to 1 and $v(u)$ is an increasing function of $u$, 
we have $f_{\boldsymbol{\theta}}(v)=f^{\prime}_{\boldsymbol{\theta}}(v)=0$ at the boundary $v = \pm \infty$. 
This satisfies a sufficient condition for the first and second Bartlett identities \citep{meng09}, 
hence we can always find at least one Bartlizable transformation.

\noindent (b) It is enough to investigate the derivatives with respect to $\mathbf{v}$. 
Since $f_{\boldsymbol{\theta}}(\mathbf{v} |\mathbf{y}) = 0$ at the boundary, we have 
$$
\textrm{E} \left\{ \nabla_{\mathbf{v}} \log f_{\boldsymbol{\theta}}(\mathbf{v} |\mathbf{y})
| \mathbf{y}  \right\} 
= \int_{\Omega_{\mathbf{v}}} \nabla_{\mathbf{v}} f_{\boldsymbol{\theta}}(\mathbf{v} |\mathbf{y}) d\mathbf{v}  
= \mathbf{0}, 
$$
which leads to the first Bartlett identity 
$$
\textrm{E} \left\{ \nabla_{\mathbf{v}} \ell_e(\boldsymbol{\theta}, \mathbf{v}) \right\}
= \textrm{E} \left\{ \nabla_{\mathbf{v}} \log f_{\boldsymbol{\theta}}(\mathbf{v} |\mathbf{y}) \right\}
= \textrm{E} \left[ \textrm{E} \left\{ 
\nabla_{\mathbf{v}} \log f_{\boldsymbol{\theta}}(\mathbf{v} |\mathbf{y}) | \mathbf{y}  \right\} \right] 
= \mathbf{0}. 
$$
Using the similar argument, we have 
$$
\textrm{E} \left\{ 
\frac{\partial^2 \log f_{\boldsymbol{\theta}}(\mathbf{v}|\mathbf{y})}{\partial v_i \partial v_j} \Bigg| \mathbf{y}  
\right\} 
= \int_{\Omega_{\mathbf{v}}} \frac{\partial}{\partial v_i} 
\left( \frac{\partial f_{\boldsymbol{\theta}}(\mathbf{v} |\mathbf{y})}{\partial v_j} \right) d\mathbf{v}  = 0, 
$$
for any $i, j = 1, ..., q$. This leads to the second Bartlett identity.

\noindent (c) Now suppose that the probability density function 
$f_{\boldsymbol{\theta}}(v)$ is differentiable with respect to $u$ for any $u$ 
in the support $\Omega_v = (-\infty, \infty)$. 
Since $f_{\boldsymbol{\theta}}(v)>0$ and $\int_{-\infty}^\infty f(v) dv = 1$, 
we have $f(v)=0$ at the boundary $v=\pm \infty$. 
Then we also have $f^{\prime}(v)=0$ at the boundary $v=\pm \infty$ 
since $\int_{-\infty}^t f^{\prime}(v) dv < \infty$ for any $t\in \mathbb{R}$. 
Thus, $v$ achieves the sufficient condition for the Bartlett identities. 
\hfill \qed

\subsection{Proof of Theorem 4.1}
\label{proof:hcrb}

In Section 4 of the main paper, 
we suppose that $\widehat{\boldsymbol{\zeta}}(\mathbf{y})$ is an unbiased estimator of 
$\boldsymbol{\zeta}(\boldsymbol{\theta}, \mathbf{v})$ such that 
\begin{equation}  \label{eq:unbiased1}
\textrm{E} \left[ \widehat{\boldsymbol{\zeta}}(\mathbf{y}) 
- \boldsymbol{\zeta}(\boldsymbol{\theta}, \mathbf{v}) \right] = 0
\end{equation}
with 
\begin{equation}  \label{eq:unbiased2}
\lim_{\mathbf{v}  \to \partial \Omega_{\mathbf{v}}} \int_{\Omega_{\mathbf{y}}} 
\left[ \widehat{\boldsymbol{\zeta}}(\mathbf{y}) 
- \boldsymbol{\zeta}(\boldsymbol{\theta}, \mathbf{v}) \right] f_{\boldsymbol{\theta}}(\mathbf{y}, \mathbf{v}) d\mathbf{y}  = 0.
\end{equation}
We further assume an additional condition 
either $a(\boldsymbol{\theta}, \mathbf{y})$ does not depend on $\mathbf{y}$ 
or its second derivative
$\nabla^2_{\boldsymbol{\theta}} a(\boldsymbol{\theta}, \mathbf{y})$
is positive semi-definite.
Then, we have
\begin{align*}
& \nabla_{\boldsymbol{\theta},\mathbf{v}} 
\int_{\Omega_{\mathbf{y}}} \left\{ \widehat{\boldsymbol{\zeta}}(\mathbf{y}) 
-\boldsymbol{\zeta}(\boldsymbol{\theta}, \mathbf{v}) \right\} f_{\boldsymbol{\theta}}(\mathbf{y}, \mathbf{v}) d\mathbf{y} 
\\
&= - \int_{\Omega_{\mathbf{y}}} 
\{ \nabla_{\boldsymbol{\theta},\mathbf{v}} \boldsymbol{\zeta}(\boldsymbol{\theta}, \mathbf{v}) \} 
f_{\boldsymbol{\theta}}(\mathbf{y}, \mathbf{v}) d\mathbf{y}  
+ \int_{\Omega_{\mathbf{y}}} \{\nabla_{\boldsymbol{\theta},\mathbf{v}} f_{\boldsymbol{\theta}}(\mathbf{y}, \mathbf{v})\}
\left\{ \widehat{\boldsymbol{\zeta}}(\mathbf{y}) - \boldsymbol{\zeta}(\boldsymbol{\theta}, \mathbf{v}) \right\} d\mathbf{y} 
\end{align*}
Integrating with respect to $\mathbf{v}$ leads to 
\begin{align*}
\textrm{LHS} = \int_{\Omega_{\mathbf{v}}} 
\left[ \nabla_{\boldsymbol{\theta},\mathbf{v}}
\int_{\Omega_{\mathbf{y}}} \left\{ \widehat{\boldsymbol{\zeta}}(\mathbf{y}) 
- \boldsymbol{\zeta}(\boldsymbol{\theta}, \mathbf{v}) \right\} f_{\boldsymbol{\theta}}(\mathbf{y},\mathbf{v}) d\mathbf{y} \right] d\mathbf{v}  = 0
\end{align*}
from the assumption \eqref{eq:unbiased1} and \eqref{eq:unbiased2}. 
Thus, we have 
\begin{align*}
\textrm{E} \left\{ \nabla \boldsymbol{\zeta}(\boldsymbol{\theta},\mathbf{v}) \right\}
&= \iint \{ \nabla \boldsymbol{\zeta}(\boldsymbol{\theta},\mathbf{v}) \}
f_{\boldsymbol{\theta}}(\mathbf{y},\mathbf{v}) d\mathbf{y}  d\mathbf{v}  
\\
&= \iint \nabla f_{\boldsymbol{\theta}}(\mathbf{y}, \mathbf{v})
\left\{ \widehat{\boldsymbol{\zeta}}(\mathbf{y}) - \boldsymbol{\zeta}(\boldsymbol{\theta}, \mathbf{v}) \right\} d\mathbf{y}  d\mathbf{v}  
\\
&= \iint \nabla \ell_e(\boldsymbol{\theta}, \mathbf{v})
\left\{ \widehat{\boldsymbol{\zeta}}(\mathbf{y}) 
- \boldsymbol{\zeta}(\boldsymbol{\theta}, \mathbf{v}) \right\} 
f_{\boldsymbol{\theta}}(\mathbf{y}, \mathbf{v}) d\mathbf{y}  d\mathbf{v}.
\end{align*}
Note here that the variance and covariance are given by
\begin{align*}
\textrm{Var} \left\{ \nabla \ell_e(\boldsymbol{\theta}, \mathbf{v}) \right\}
&= \textrm{E} \left[ \left\{ \nabla \ell_e(\boldsymbol{\theta}, \mathbf{v}) \right\}^\intercal 
\left\{ \nabla \ell_e(\boldsymbol{\theta}, \mathbf{v}) \right\} \right]
= \textrm{E} \left\{ - \nabla^2 \ell_e(\boldsymbol{\theta}, \mathbf{v}) \right\}
\geq \textrm{E} \left\{ - \nabla^2 h(\boldsymbol{\theta}, \mathbf{v}) \right\}
\end{align*}
and
\begin{align*}
\textrm{Cov} \left\{ \nabla \ell_e(\boldsymbol{\theta}, \mathbf{v}), 
\widehat{\boldsymbol{\zeta}}(\mathbf{y}) - \boldsymbol{\zeta}(\boldsymbol{\theta}, \mathbf{v}) \right\}
&= \textrm{E} \left[ \left\{ \nabla \ell_e(\boldsymbol{\theta}, \mathbf{v}) \right\}
\left\{ \widehat{\boldsymbol{\zeta}}(\mathbf{y}) 
- \boldsymbol{\zeta}(\boldsymbol{\theta}, \mathbf{v}) \right\} \right] 
\\
&= \iint \left\{ \nabla \ell_e(\boldsymbol{\theta}, \mathbf{v}) \right\}
\left\{ \widehat{\boldsymbol{\zeta}}(\mathbf{y}) 
- \boldsymbol{\zeta}(\boldsymbol{\theta}, \mathbf{v}) \right\}
f_{\boldsymbol{\theta}}(\mathbf{y}, \mathbf{v}) d\mathbf{y}  d\mathbf{v}  
\\
&=\textrm{E} \left\{ \nabla \zeta(\boldsymbol{\theta}, \mathbf{v}) \right\},
\end{align*}
respectively.
By the multivariate Cauchy-Schwartz inequality, we have 
\begin{align*}
\textrm{Var} \left\{ \widehat{\boldsymbol{\zeta}}(\mathbf{y}) 
- \boldsymbol{\zeta}(\boldsymbol{\theta}, \mathbf{v}) \right\}
&\geq \textrm{E}\left\{ \nabla \zeta(\boldsymbol{\theta}, \mathbf{v}) \right\}
\textrm{E}\left\{ -\nabla^2 h(\boldsymbol{\theta}, \mathbf{v}) \right\}^{-1} 
\textrm{E} \left\{ \nabla \zeta(\boldsymbol{\theta}, \mathbf{v}) \right\}^\intercal 
= \mathcal{Z}_{\boldsymbol{\theta}} \mathcal{I}_{\boldsymbol{\theta}}^{-1} \mathcal{Z}_{\boldsymbol{\theta}}^\intercal.    
\end{align*}
\hfill \qed

\subsection{Proof of Theorem 4.2}
\label{proof:h-asymptotic}

We denote the inverse matrices of observed and expected h-information by
$$
I(\boldsymbol{\theta},\mathbf{v})^{-1} 
= \begin{bmatrix} 
I_{\boldsymbol{\theta}\boldsymbol{\theta}} & I_{\boldsymbol{\theta}\mathbf{v}} 
\\ I_{\mathbf{v}\boldsymbol{\theta}} & I_{\mathbf{v}\mathbf{v}} 
\end{bmatrix}^{-1}
= \begin{bmatrix} 
I^{\boldsymbol{\theta}\boldsymbol{\theta}} & I^{\boldsymbol{\theta}\mathbf{v}} 
\\ I^{\mathbf{v}\boldsymbol{\theta}} & I^{\mathbf{v}\mathbf{v}} 
\end{bmatrix}
\quad \textrm{and} \quad
\mathcal{I}_{\boldsymbol{\theta}}^{-1} 
= \begin{bmatrix} 
\mathcal{I}_{\boldsymbol{\theta}\boldsymbol{\theta}} & \mathcal{I}_{\boldsymbol{\theta}\mathbf{v}} 
\\ \mathcal{I}_{\mathbf{v}\boldsymbol{\theta}} & \mathcal{I}_{\mathbf{v}\mathbf{v}} 
\end{bmatrix}^{-1}
= \begin{bmatrix} 
\mathcal{I}^{\boldsymbol{\theta}\boldsymbol{\theta}} & \mathcal{I}^{\boldsymbol{\theta}\mathbf{v}} 
\\ \mathcal{I}^{\mathbf{v}\boldsymbol{\theta}} & \mathcal{I}^{\mathbf{v}\mathbf{v}} 
\end{bmatrix},
$$
respectively.
Asymptotic normality can be proved by the central limit theorem and
the Bartlett identities. Note that the MHLE
$\widehat{\boldsymbol{\theta}} = \argmax_{\boldsymbol{\theta}}h(\boldsymbol{\theta},\mathbf{v})$ 
adapts the asymptotic normality of the MLE, 
$$
(\widehat{\boldsymbol{\theta}} - \boldsymbol{\theta}) \to N \left(\mathbf{0}, \ \mathcal{I}_m(\boldsymbol{\theta})^{-1}\right), 
$$
where
$\mathcal{I}_m(\boldsymbol{\theta}) 
= \textrm{E} \left\{ - \nabla^2 \ell(\boldsymbol{\theta}; \mathbf{y}) \right\}$ 
is the expected Fisher information. 
Here, the variance of $\widehat{\boldsymbol{\theta}}$
can be estimated by using the observed Fisher information, 
$$
\widehat{\textrm{Var}} (\widehat{\boldsymbol{\theta}}) 
= \left[ -\nabla^2 \ell(\boldsymbol{\theta}; \mathbf{y}) \right]^{-1}_{\boldsymbol{\theta}=\widehat{\boldsymbol{\theta}}}
= \left[ -\nabla^2 h(\boldsymbol{\theta}, \widetilde{\mathbf{v}}(\boldsymbol{\theta})) \right]^{-1}_{\boldsymbol{\theta}=\widehat{\boldsymbol{\theta}}}. 
$$
Since $\widetilde{\mathbf{v}}$ is the solution of $\nabla_{\mathbf{v}} h(\boldsymbol{\theta}, \mathbf{v}) = 0$,
the chain rule leads to 
\begin{align*}
\nabla \ell(\boldsymbol{\theta};\mathbf{y})
&= \nabla_{\boldsymbol{\theta}} h(\boldsymbol{\theta}, \widetilde{\mathbf{v}})
= \left[ \nabla_{\boldsymbol{\theta}} h(\boldsymbol{\theta}, \mathbf{v}) \right]_{\mathbf{v}=\widetilde{\mathbf{v}}}
+ \left\{ \nabla_{\boldsymbol{\theta}}\widetilde{\mathbf{v}}(\boldsymbol{\theta}) \right\}
\left[ \nabla_{\mathbf{v}} h(\boldsymbol{\theta},\mathbf{v})\right]_{\mathbf{v} =\widetilde{\mathbf{v}}}
= \left[ \nabla_{\boldsymbol{\theta}} h(\boldsymbol{\theta}, \mathbf{v}) \right]_{\mathbf{v}=\widetilde{\mathbf{v}}}
\end{align*}
and 
\begin{align*}
\nabla^2 \ell(\boldsymbol{\theta}; \mathbf{y})
= \nabla^2_{\boldsymbol{\theta}} h(\boldsymbol{\theta}, \widetilde{\mathbf{v}})
= \left[ \nabla^2_{\boldsymbol{\theta}} h(\boldsymbol{\theta}, \mathbf{v}) \right]_{\mathbf{v} =\widetilde{\mathbf{v}}}
+ \left\{ \nabla_{\boldsymbol{\theta}}\widetilde{\mathbf{v}}(\boldsymbol{\theta}) \right\}
\left[ \nabla_{\boldsymbol{\theta}} \nabla_{\mathbf{v}}^{\intercal} h(\boldsymbol{\theta}, \mathbf{v}) 
\right]_{\mathbf{v} =\widetilde{\mathbf{v}}}.
\end{align*}
From the fact that 
$$
\nabla_{\boldsymbol{\theta}} 
\left[ \nabla_{\mathbf{v}}^\intercal h(\boldsymbol{\theta},\mathbf{v})\right]_{\mathbf{v}=\widetilde{\mathbf{v}}}
= \left\{ \nabla_{\boldsymbol{\theta}} \widetilde{\mathbf{v}}(\boldsymbol{\theta}) \right\}
\left[ \nabla^2_{\mathbf{v}} h(\boldsymbol{\theta}, \mathbf{v}) \right]_{\mathbf{v} =\widetilde{\mathbf{v}}}
+ \left[ \nabla_{\boldsymbol{\theta}} \nabla_{\mathbf{v}}^{\intercal} h(\boldsymbol{\theta}, \mathbf{v})
\right]_{\mathbf{v} =\widetilde{\mathbf{v}}} = 0, 
$$
we have
\begin{align*}
\nabla^2 \ell(\boldsymbol{\theta}; \mathbf{y})
&= \left[ \nabla^2_{\boldsymbol{\theta}} h(\boldsymbol{\theta}, \mathbf{v}) 
-
\left\{ \nabla_{\boldsymbol{\theta}} \nabla_{\mathbf{v}}^\intercal h(\boldsymbol{\theta}, \mathbf{v})\right\} 
\left\{ \nabla^2_{\mathbf{v}} h(\boldsymbol{\theta}, \mathbf{v})\right\}^{-1} 
\left\{ \nabla_{\mathbf{v}}\nabla_{\boldsymbol{\theta}}^\intercal h(\boldsymbol{\theta}, \mathbf{v})\right\}
\right]_{\mathbf{v} =\widetilde{\mathbf{v}}},
\end{align*}
which implies that 
$\left\{ - \nabla^2 \ell(\boldsymbol{\theta}; \mathbf{y}) \right\}^{-1}
=\widetilde{I}^{\boldsymbol{\theta}\boldsymbol{\theta}}$
where $\widetilde{I}=I(\boldsymbol{\theta}, \widetilde{\mathbf{v}})$.
Thus, the variance of $\widehat{\boldsymbol{\theta}}$ is
$$
\textrm{Var}(\widehat{\boldsymbol{\theta}} - \boldsymbol{\theta})
\to \mathcal{I}^{\boldsymbol{\theta}\boldsymbol{\theta}},
$$
which can be estimated by 
$
\widehat{\textrm{Var}} (\widehat{\boldsymbol{\theta}}-\boldsymbol{\theta}) 
= \widehat{I}^{\boldsymbol{\theta}\boldsymbol{\theta}}.
$
Note here that the convergence of MLE
$\widehat{\boldsymbol{\theta}} \to \boldsymbol{\theta} $
leads to
$\widehat{\mathbf{v}} = \widetilde{\mathbf{v}}(\widehat{\boldsymbol{\theta}}; \mathbf{y}) 
\to \widetilde{\mathbf{v}}(\boldsymbol{\theta};\mathbf{y})$.
Furthermore, $\widetilde{\mathbf{v}}(\boldsymbol{\theta}; \mathbf{y}) \to \textrm{E}(\mathbf{v} |\mathbf{y})$
since $\textrm{Var}(\mathbf{v} |\mathbf{y}) \to 0$. 
Thus, $\widehat{\mathbf{v}}$ converges to the best unbiased predictor, 
$$
\widehat{\mathbf{v}}
\to \textrm{E}_{\boldsymbol{\theta}}(\mathbf{v} |\mathbf{y}) . 
$$
The variance of the MHL predictor $\widehat{\mathbf{v}}$ can be expressed as 
$$
\textrm{Var}(\widehat{\mathbf{v}}-\mathbf{v}) 
= \textrm{Var} \left(\widetilde{\mathbf{v}}(\widehat{\boldsymbol{\theta}}) 
- \widetilde{\mathbf{v}}(\boldsymbol{\theta}) + \widetilde{\mathbf{v}}(\boldsymbol{\theta}) 
-\mathbf{v} \right). 
$$
By the delta method, $\widetilde{\mathbf{v}}(\widehat{\boldsymbol{\theta}})
\approx \widetilde{\mathbf{v}}(\boldsymbol{\theta}) 
+ \left\{ \nabla_{\boldsymbol{\theta}} \widetilde{\mathbf{v}}(\boldsymbol{\theta})\right\}^\intercal
(\widehat{\boldsymbol{\theta}}-\boldsymbol{\theta}) 
$ implies that 
$$
\textrm{Var} \left\{ \widetilde{\mathbf{v}}(\widehat{\boldsymbol{\theta}}) - \widetilde{\mathbf{v}}(\boldsymbol{\theta}) \right\} 
\approx \left\{ \nabla_{\boldsymbol{\theta}} \widetilde{\mathbf{v}}(\boldsymbol{\theta})\right\}^\intercal
\textrm{Var}(\widehat{\boldsymbol{\theta}})
\left\{ \nabla_{\boldsymbol{\theta}} \widetilde{\mathbf{v}}(\boldsymbol{\theta})\right\}
= \widetilde{I}_{\mathbf{v}\mathbf{v}}^{-1} \widetilde{I}_{\mathbf{v} \boldsymbol{\theta}} 
\widetilde{I}^{\boldsymbol{\theta} \boldsymbol{\theta}}
\widetilde{I}_{\boldsymbol{\theta} \mathbf{v}} \widetilde{I}_{\mathbf{v} \mathbf{v}}^{-1}, 
$$
and the Bartlett identities imply that
$
\textrm{Var} \left\{\widetilde{\mathbf{v}}(\boldsymbol{\theta}) - \mathbf{v} \right\}
\approx \mathcal{I}_{\mathbf{v} \mathbf{v}}^{-1}.
$
Thus, the variance of $(\widehat{\mathbf{v}}-\mathbf{v})$ is
$$
\textrm{Var}(\widehat{\mathbf{v}}-\mathbf{v})
\to \mathcal{I}^{\mathbf{v}\mathbf{v}},
$$
which can be estimated by
$
\widehat{\textrm{Var}} (\widehat{\mathbf{v}}-\mathbf{v}) = \widehat{I}^{\mathbf{v} \mathbf{v}}.
$
Similarly, the covariance component becomes 
$$
\textrm{Cov}\left[ (\widehat{\boldsymbol{\theta}}-\boldsymbol{\theta}), (\widehat{\mathbf{v}}-\mathbf{v}) \right] 
\to \mathcal{I}^{\boldsymbol{\theta} \mathbf{v}}
$$
which can be estimated by 
$
\widehat{\textrm{Cov}} [ (\widehat{\boldsymbol{\theta}}-\boldsymbol{\theta}), (\widehat{\mathbf{v}}-\mathbf{v})  ] 
= \widehat{I}^{\boldsymbol{\theta} \mathbf{v}}.
$
Consequently, we have
$$
\textrm{Var} \begin{bmatrix}
\widehat{\boldsymbol{\theta}} - \boldsymbol{\theta} \\
\widehat{\mathbf{v}} - \mathbf{v}
\end{bmatrix}
\to \mathcal{I}^{-1},
$$
which can be estimated by ${I}(\widehat{\boldsymbol{\theta}}, \widehat{\mathbf{v}})^{-1} \to \mathcal{I}_{\boldsymbol{\theta}}^{-1}$.
\hfill \qed
\subsubsection{Theorem 4.2 in LMMs}
In the linear mixed model of Example 1, we have two MHLEs, 
\begin{equation*}
\widetilde{v}_{i}
=\frac{\lambda\sum_{j=1}^{m} 
(y_{ij}-\mathbf{x}_{ij}^{\intercal} \boldsymbol{\beta})}{\sigma^{2}+m\lambda}
\quad \textrm{and}\quad 
\widetilde{v}_{i}'
=\frac{\lambda\sum_{j=1}^{m}(y_{ij}-\mathbf{x}_{ij}^{\intercal} \boldsymbol{\beta})
-\sigma^{2}\lambda}{\sigma^{2}+m\lambda},
\end{equation*}
from the $\mathbf{v}$-scale $h(\boldsymbol{\theta},\mathbf{v})$
and $\mathbf{u}$-scale $h(\boldsymbol{\theta},\mathbf{u})$, respectively. 
Here, the $\mathbf{v}$-scale leads to BUP for the conditional mean $\mu_{ij}$, 
\begin{equation*}
\widetilde{\mu}_{ij}
=\mathbf{x}_{ij}^{\intercal} \boldsymbol{\beta}+\widetilde{v}_{i}
=\textrm{E}(\mathbf{x}_{ij}^{\intercal} \boldsymbol{\beta} + v_{i}|\mathbf{y})
=\textrm{E}(\mu_{ij}|\mathbf{y}),
\end{equation*}
whereas the $\mathbf{u}$-scale leads to asymptotic BUP 
for $\mu_{ij}$ as $m\to\infty$,
\begin{equation*}
\widetilde{\mu}_{ij}'
=\mathbf{x}_{ij}^{\intercal} \boldsymbol{\beta}+\widetilde{v}_{i}'
=\textrm{E}(\mathbf{x}_{ij}^{\intercal} \boldsymbol{\beta}+v_{i}|\mathbf{y})
-\frac{\sigma^{2}\lambda}{\sigma^{2}+m\lambda}
=\textrm{E}(\mu_{ij}|\mathbf{y})+O(1/m). 
\end{equation*}

Now we illustrate Theorem 4.2 in this example.
For simplicity of arguments, consider a one-way LMM
with $\mathbf{x}_{ij}^{\intercal}\boldsymbol{\beta} = \mu_{0}$
and $\sigma^{2}=\lambda=1$, where $n=2$. 
The use of $v$-scale leads to the MLE 
$\widehat{\mu}_{0}=\bar{y}=(\bar{y}_{1}+\bar{y}_{2})/2$ 
and the BUP 
$\widehat{v}_{1}
=m(\bar{y}_{1}-\widehat{\mu}_{0})/(m+1)
=m(\bar{y}_{1}-\bar{y}_{2})/(2m+2)$ 
and $\widehat{v}_{2}
=m(\bar{y}_{2}-\widehat{\mu}_{0})/(m+1)
=m(\bar{y}_{2}-\bar{y}_{1})/(2m+2)$. 
Note that $\textrm{E}(\mu_{0}-\widehat{\mu}_{0})
=\textrm{E}(v_{1}-\widehat{v}_{1})=\textrm{E}(v_{2}-\widehat{v}_{2})=0$. 
Here the variance and covariance are given by 
\begin{align*}
& \textrm{Var}(\mu_{0}-\widehat{\mu}_{0})
=\textrm{Var}(\bar{y})
=\textrm{Var}(\textrm{E}(\bar{y}|v_{1},v_{2}))+\textrm{E}(\textrm{Var}(\bar{y}|v_{1},v_{2}))
=\frac{m+1}{2m}, \\
& \textrm{Var}(v_{i}-\widehat{v}_{i})
=\textrm{Var}(\textrm{E}(v_{i}-\widehat{v}_{i}|\mathbf{y}))
+\textrm{E}(\textrm{Var}(v_{i}-\widehat{v}_{i}|\mathbf{y}))
=\frac{m}{2m+2}+\frac{1}{m+1}=\frac{m+2}{2m+2}, \\
& \textrm{Cov}(\mu_{0}-\widehat{\mu}_{0},v_{i}-\widehat{v}_{i})
=\textrm{E}\left[ (\mu_{0}-\widehat{\mu}_{0})(v_{i}-\widehat{v}_{i})\right] 
=-\textrm{E}\left[ \bar{y}\{\textrm{E}(v_{i}|\mathbf{y})-\widehat{v}_{i}\}\right] 
=-\frac{1}{2}, \\
& \textrm{Cov}(v_{1}-\widehat{v}_{1},v_{2}-\widehat{v}_{2})
=\frac{m\textrm{E}\{(\bar{y}_{1}-\bar{y}_{2})(v_{1}-v_{2})\}}{2m+2}
-\frac{m^{2}\textrm{E}\{(\bar{y}_{1}-\bar{y}_{2})^{2}\}}{(2m+2)^{2}}
=\frac{m}{2m+2},
\end{align*}
for $i=1,2$. Since the expected h-information gives 
\begin{equation*}
\left[ -\textrm{E}\left\{ 
\nabla^{2} h(\mu_{0},v_{1},v_{2})
\right\} \right]
^{-1}= 
\begin{bmatrix}
2m & m & m \\ 
m & m+1 & 0 \\ 
m & 0 & m+1
\end{bmatrix}
^{-1}= 
\begin{bmatrix}
\frac{m+1}{2m} & -\frac{1}{2} & -\frac{1}{2} \\ 
-\frac{1}{2} & \frac{m+2}{2m+2} & \frac{m}{2m+2} \\ 
-\frac{1}{2} & \frac{m}{2m+2} & \frac{m+2}{2m+2}
\end{bmatrix}
, 
\end{equation*}
we can see that Theorem 4.2 holds exactly for this
example, even in small samples.

\subsection{Proof of Theorem 6.1}
\label{proof:future}

Since the h-confidence gives the CD for $\theta$,
which satisfy the confidence feature,
it is enough to show that the PD for $u$ satisfies the theorem.
For any $t \in \Omega$, we have
\begin{align*}
C(q_{\alpha}(t); t)
&= \int_{\Omega} F_{1}(q_{\alpha}(t);\eta)  
\left\{ \nabla_{\eta} F_{n}(t; \eta) \right\} d \eta \\
&= \int_{\Omega} F_{1}(q_{\alpha}(t);\eta)  
\left\{ \nabla_{\eta} F_{n}(\eta;t) \right\} d \eta \\
&= \int_{\Omega} F_{1}(q_{\alpha}(t);\eta)  
f_{n}(\eta;t) d \eta \\
&= 1-\alpha,
\end{align*}
where $f_{n}(t;\eta)$ denotes the density function of $t$ with parameter $\eta$
and $f_{n}(\eta;t)$ is the function obtained by switching $t$ and $\eta$.
Since the support of $T$ and parameter space of $\eta$ is identical,
for any $\eta \in \Omega$, we also have
\begin{align*}
\int_{\Omega} F_{1}(q_{\alpha}(\eta);t)  
f_{n}(t;\eta) d t
= 1-\alpha.
\end{align*}
Thus, for any $\eta \in \Omega$,
\begin{align*}
P_{\theta}(U \leq q_{\alpha}(t)) 
&= \int_{\Omega} \int_{u \leq q_{\alpha}(t)}
f(u;\eta) f_{n}(t;\eta) d u d t
\\
&= \int_{\Omega} F_{1}(q_{\alpha}(t);\eta) f_{n}(t;\eta) d t
= 1-\alpha,
\end{align*}
which implies that $\textrm{PI}_{\alpha}(t)$ satisfies the confidence feature.
\hfill \qed

\subsection{Proof of Theorem 6.2}
\label{proof:realized}

Here, the h-confidence gives the PD for $u$,
$$
c(u;t) 
= \int_{\Theta} c(\theta,u;t) d\theta
= \int_{\Theta} [\nabla_{u} \{1-F(t|u)\}]
[\nabla_{\theta} \{1-F_{\theta}(u)\}] d\theta
= \nabla_{u} P(T\geq t| u),
$$
which gives the confidence feature for realized value,
analogous to the current definition of the CD for fixed parameter.
Since $F(u;\theta)$ is a cumulative distribution function,
$\nabla_{\theta} F(u;\theta) = 0$ at the boundary of $u \in \Omega$.
Then, the h-confidence leads to the marginal CD for $\theta$,
\begin{align*}
c(\theta;t)
= \int_{\Omega} c(\theta,u;t) du
&= \int_{\Omega} \{\nabla_{u} F(t|u)\}
\{\nabla_{\theta} F(u;\theta)\} du \\
&= - \int_{\Omega} F(t|u) \{\nabla_{\theta} f_{\theta}(u)\} du \\
&= - \int_{t' \leq t} \int_{\Omega} 
\{\nabla_{\theta} f_{\theta}(t', u)\} dt' du 
=\nabla_{\theta} P_{\theta}(T\geq t).
\end{align*}
Thus, the CIs and PIs from the h-confidence maintain the coverage probability.
\hfill \qed

\subsection{Proof of Theorem 7.1} 
\label{proof:h-ela}

\begin{lemma} \label{lem:ela}
Suppose that $(\widehat{\boldsymbol{\theta}},\widehat{\mathbf{v}})$
is a MHLE from the h-likelihood 
and $(\widehat{\boldsymbol{\theta}}_{B},\widehat{\mathbf{v}}_{B})$ 
is an approximate MHLE from the approximate h-likelihood,
\begin{equation}
h_{B}(\boldsymbol{\theta},\mathbf{v})
=\ell_{e}(\boldsymbol{\theta},\mathbf{v})
-\ell_{e}(\boldsymbol{\theta},\widetilde{\mathbf{v}})
+\log L_{B}(\boldsymbol{\theta};\mathbf{y}).
\end{equation}
Then, as $B\to \infty$, 
\begin{equation*}
(\widehat{\boldsymbol{\theta}}_{B},\widehat{\mathbf{v}}_{B})
\overset{p}{\to}
(\widehat{\boldsymbol{\theta}},\widehat{\mathbf{v}})
\quad \textrm{and} \quad
I_{B}(\widehat{\boldsymbol{\theta}}_{B},\widehat{\mathbf{v}}_{B})
\overset{p}{\to}
I(\widehat{\boldsymbol{\theta}},\widehat{\mathbf{v}}). 
\end{equation*}
\end{lemma}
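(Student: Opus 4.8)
The plan is to treat the Monte Carlo term $\log L_{B}(\boldsymbol{\theta};\mathbf{y})$ as a consistent estimator of the marginal log-likelihood $\ell(\boldsymbol{\theta};\mathbf{y})$ and then propagate this limit through the definitions of $h_{B}$ and $I_{B}$. First I would observe that, since the $\mathbf{v}_{b}$ are i.i.d.\ draws from $q$ with the same support as $\mathbf{v}$, each summand $L_{e}(\boldsymbol{\theta},\mathbf{v}_{b})/q(\mathbf{v}_{b})$ has mean $\int L_{e}(\boldsymbol{\theta},\mathbf{v})\,d\mathbf{v}=L(\boldsymbol{\theta};\mathbf{y})$, so $L_{B}(\boldsymbol{\theta};\mathbf{y})$ is an unbiased importance-sampling estimator and the weak law of large numbers gives $L_{B}(\boldsymbol{\theta};\mathbf{y})\overset{p}{\to}L(\boldsymbol{\theta};\mathbf{y})$ pointwise in $\boldsymbol{\theta}$. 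Under R3--R5 one may differentiate the finite sum term by term and interchange the $\boldsymbol{\theta}$-derivative with the integral defining $L(\boldsymbol{\theta};\mathbf{y})$, so the same argument yields $\nabla_{\boldsymbol{\theta}}L_{B}\overset{p}{\to}\nabla_{\boldsymbol{\theta}}L$ and $\nabla^{2}_{\boldsymbol{\theta}}L_{B}\overset{p}{\to}\nabla^{2}_{\boldsymbol{\theta}}L$; the continuous mapping theorem then gives $\log L_{B}(\boldsymbol{\theta};\mathbf{y})\overset{p}{\to}\ell(\boldsymbol{\theta};\mathbf{y})$, $\nabla_{\boldsymbol{\theta}}\log L_{B}\overset{p}{\to}\nabla_{\boldsymbol{\theta}}\ell$, and $\nabla^{2}_{\boldsymbol{\theta}}\log L_{B}\overset{p}{\to}\nabla^{2}_{\boldsymbol{\theta}}\ell$, the last of which is exactly $-I_{11}(\boldsymbol{\theta})$ in the notation of the text.

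Next I would use the identity $a(\boldsymbol{\theta};\mathbf{y})=\ell(\boldsymbol{\theta};\mathbf{y})-\ell_{e}(\boldsymbol{\theta},\widetilde{\mathbf{v}})$ coming from \eqref{eq:htilde}, noting that the mode $\widetilde{\mathbf{v}}=\widetilde{\mathbf{v}}(\boldsymbol{\theta})=\argmax_{\mathbf{v}}\ell_{e}(\boldsymbol{\theta},\mathbf{v})$ involves no Monte Carlo randomness and is the same object appearing in both $h$ and $h_{B}$. Since $h_{B}(\boldsymbol{\theta},\mathbf{v})=\ell_{e}(\boldsymbol{\theta},\mathbf{v})-\ell_{e}(\boldsymbol{\theta},\widetilde{\mathbf{v}})+\log L_{B}(\boldsymbol{\theta};\mathbf{y})$ and $h(\boldsymbol{\theta},\mathbf{v})=\ell_{e}(\boldsymbol{\theta},\mathbf{v})-\ell_{e}(\boldsymbol{\theta},\widetilde{\mathbf{v}})+\ell(\boldsymbol{\theta};\mathbf{y})$, Step~1 gives $h_{B}\overset{p}{\to}h$ together with $\nabla h_{B}\overset{p}{\to}\nabla h$ and $\nabla^{2}h_{B}\overset{p}{\to}\nabla^{2}h$ at every $(\boldsymbol{\theta},\mathbf{v})$; equivalently $I_{B}(\boldsymbol{\theta},\mathbf{v})\overset{p}{\to}I(\boldsymbol{\theta},\mathbf{v})$, because in the displayed formula for $I_{B}$ the terms $-\nabla^{2}_{\boldsymbol{\theta},\mathbf{v}}\ell_{e}(\boldsymbol{\theta},\mathbf{v})+[\nabla^{2}_{\boldsymbol{\theta},\mathbf{v}}\ell_{e}(\boldsymbol{\theta},\mathbf{v})]_{\mathbf{v}=\widetilde{\mathbf{v}}}$ coincide with the corresponding pieces of $I(\boldsymbol{\theta},\mathbf{v})=-\nabla^{2}h(\boldsymbol{\theta},\mathbf{v})$ and do not depend on $B$, while only the block $I_{11}(\boldsymbol{\theta})=-\nabla^{2}_{\boldsymbol{\theta}}\log L_{B}$ carries the sampling variation. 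To make this usable at a random argument I would upgrade the pointwise convergence to a locally uniform one: under R2--R5 the family $\{\,L_{e}(\boldsymbol{\theta},\mathbf{v})/q(\mathbf{v}),\ \nabla_{\boldsymbol{\theta}}[\,\cdot\,],\ \nabla^{2}_{\boldsymbol{\theta}}[\,\cdot\,] : (\boldsymbol{\theta},\mathbf{v})\in K\,\}$ over a compact neighbourhood $K$ of $(\widehat{\boldsymbol{\theta}},\widehat{\mathbf{v}})$ admits a $q$-integrable envelope, which yields a uniform law of large numbers and hence $\sup_{K}\,|\,I_{B}-I\,|\overset{p}{\to}0$ and $\sup_{K}\,|\,\nabla h_{B}-\nabla h\,|\overset{p}{\to}0$.

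With locally uniform convergence in hand, the consistency $(\widehat{\boldsymbol{\theta}}_{B},\widehat{\mathbf{v}}_{B})\overset{p}{\to}(\widehat{\boldsymbol{\theta}},\widehat{\mathbf{v}})$ follows from a standard argmax/M-estimation argument: $(\widehat{\boldsymbol{\theta}},\widehat{\mathbf{v}})$ is an isolated stationary point of $h$ since $I(\widehat{\boldsymbol{\theta}},\widehat{\mathbf{v}})$ is positive definite by R6, and uniform closeness of $h_{B}$ (or of the estimating equations $\nabla h_{B}(\widehat{\boldsymbol{\theta}}_{B},\widehat{\mathbf{v}}_{B})=0=\nabla h(\widehat{\boldsymbol{\theta}},\widehat{\mathbf{v}})$) forces the maximiser to converge. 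For the information matrix I would then write $I_{B}(\widehat{\boldsymbol{\theta}}_{B},\widehat{\mathbf{v}}_{B})=\{I_{B}-I\}(\widehat{\boldsymbol{\theta}}_{B},\widehat{\mathbf{v}}_{B})+I(\widehat{\boldsymbol{\theta}}_{B},\widehat{\mathbf{v}}_{B})$; the first term is $o_{p}(1)$ by the uniform convergence on $K$, and the second tends to $I(\widehat{\boldsymbol{\theta}},\widehat{\mathbf{v}})$ by continuity of $I(\cdot,\cdot)$ (R3) and the consistency just established, giving $I_{B}(\widehat{\boldsymbol{\theta}}_{B},\widehat{\mathbf{v}}_{B})\overset{p}{\to}I(\widehat{\boldsymbol{\theta}},\widehat{\mathbf{v}})$.

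The step I expect to be the main obstacle is precisely the passage from the ordinary (pointwise) law of large numbers to the locally uniform one that both the argmax-consistency and the ``evaluate $I_{B}$ at a random point'' arguments need. This requires exhibiting a $q$-integrable dominating envelope for the importance ratios $L_{e}(\boldsymbol{\theta},\mathbf{v})/q(\mathbf{v})$ and their first two $\boldsymbol{\theta}$-derivatives on a neighbourhood of the MHLE, where R3 and R5 (third-order differentiability and integrable domination of the third derivatives) are exactly what one invokes, together with a matching of tails between $L_{e}(\boldsymbol{\theta},\mathbf{v})$ and the chosen $q$ (for instance the Laplace normal $N(\widetilde{\mathbf{v}},[\nabla^{2}_{\mathbf{v}}h(\boldsymbol{\theta},\mathbf{v})]^{-1}_{\mathbf{v}=\widetilde{\mathbf{v}}})$). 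Verifying that these ratios have finite second moments under $q$, so that the uniform weak law applies and the limit objects are finite, is the delicate point and is exactly where the choice of $q$ matters in practice even though the conclusion holds for any admissible $q$.
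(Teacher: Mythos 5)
Your proposal is correct and follows essentially the same route as the paper: profile out $\mathbf{v}$ so that the problem reduces to convergence of the importance-sampling estimator $L_{B}$ and its first two $\boldsymbol{\theta}$-derivatives, note that the $\mathbf{v}$-dependent pieces of $h_{B}$ and $I_{B}$ coincide with those of $h$ and $I$ so that only the $I_{11}$ block carries Monte Carlo variation, and transfer the limits to the maximizer and to evaluation at the random point. The only difference is that the paper simply cites \citet{han22b} for the convergence of $L_{B}$, of the argmax, and of $I_{11}$, whereas you prove these facts directly via the WLLN and an envelope-based uniform law of large numbers, making explicit the locally uniform convergence step that the paper's own proof leaves implicit.
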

\begin{proof}
\citet{han22b} showed that
$$
\frac{1}{B} \sum_{b=1}^{B}
\frac{L_e(\boldsymbol{\theta}, \mathbf{v}^{(b)})}
{q(\mathbf{v}^{(b)})}
\overset{p}{\to}
L(\boldsymbol{\theta};\mathbf{y})
\quad \textrm{and} \quad
I_{11}(\widehat{\boldsymbol{\theta}}_B)
\overset{p}{\to}
- \left[ \nabla_{\boldsymbol{\theta}}^2 \log f_{\boldsymbol{\theta}}(\mathbf{y}) \right]
_{\boldsymbol{\theta}=\widehat{\boldsymbol{\theta}}}.
$$
Since $
\log f_{\boldsymbol{\theta}}(\widetilde{\mathbf{v}}|\mathbf{y}) 
= \log f_{\boldsymbol{\theta}}(\widetilde{\mathbf{v}}|\mathbf{y}) 
+ \log f_{\boldsymbol{\theta}}(\mathbf{y})
- \log f_{\boldsymbol{\theta}}(\mathbf{y})
= \ell_e(\boldsymbol{\theta}, \widetilde{\mathbf{v}}) + \ell_m(\boldsymbol{\theta})$,
we have
$$
h_B(\boldsymbol{\theta}, \mathbf{v})
\overset{p}{\to}
\ell_e(\boldsymbol{\theta}, \mathbf{v})
- \ell_e(\boldsymbol{\theta}, \widetilde{\mathbf{v}})
+ \ell_m(\boldsymbol{\theta})
= \log f_{\boldsymbol{\theta}}(\mathbf{v}|\mathbf{y}) 
- \log f_{\boldsymbol{\theta}}(\widetilde{\mathbf{v}}|\mathbf{y})
+ \log f_{\boldsymbol{\theta}}(\mathbf{y})
= h(\boldsymbol{\theta}, \mathbf{v}).
$$
This leads to
$$
\widehat{\boldsymbol{\theta}}_{B}
= \argmax_{\boldsymbol{\theta}} h_B(\boldsymbol{\theta}, \widetilde{\mathbf{v}})
= \argmax_{\boldsymbol{\theta}} \left\{ 
\frac{1}{B} \sum_{b=1}^{B}
\frac{L_e(\boldsymbol{\theta}, \mathbf{v}^{(b)})}
{q(\mathbf{v}^{(b)})}
\right\}
\overset{p}{\to}
\argmax_{\boldsymbol{\theta}} L_m(\boldsymbol{\theta})
= \widehat{\boldsymbol{\theta}}
$$
and $
\widehat{\mathbf{v}}_B
= \widetilde{\mathbf{v}}(\widehat{\boldsymbol{\theta}}_B)
\overset{p}{\to}
\widetilde{\mathbf{v}}(\widehat{\boldsymbol{\theta}})
= \widehat{\mathbf{v}}.
$
Furthermore, convergence of 
$I_{11}(\widehat{\boldsymbol{\theta}}_B)$ leads to
\begin{align*}
\left[ - \nabla^2_{\boldsymbol{\theta}} \ell_e(\boldsymbol{\theta}, \mathbf{v})
+ \nabla^2_{\boldsymbol{\theta}} \ell_e(\boldsymbol{\theta}, \widetilde{\mathbf{v}})
- I_{11}(\boldsymbol{\theta}) \right]
_{\widehat{\boldsymbol{\theta}}_B, \widehat{\mathbf{v}}_B}
&\overset{p}{\to}
\left[ - \nabla^2_{\boldsymbol{\theta}}
\{ \ell_e(\boldsymbol{\theta}, \mathbf{v}) 
- \ell_e(\boldsymbol{\theta}, \widetilde{\mathbf{v}}) 
+ \ell_m(\boldsymbol{\theta})
\}
\right]_{\widehat{\boldsymbol{\theta}},\widehat{\mathbf{v}}} 
\\
&= - \left[ \nabla^2_{\boldsymbol{\theta}} h(\boldsymbol{\theta}, \mathbf{v})
\right]_{\widehat{\boldsymbol{\theta}}, \widehat{\mathbf{v}}}.
\end{align*}
Since $\nabla_{\mathbf{v}} h(\boldsymbol{\theta}, \mathbf{v})
= \nabla_{\mathbf{v}} \ell_e(\boldsymbol{\theta}, \mathbf{v})$,
we have
$
I_{B}(\widehat{\boldsymbol{\theta}}_{B},\widehat{\mathbf{v}}_{B})
\overset{p}{\to}
I(\widehat{\boldsymbol{\theta}},\widehat{\mathbf{v}}). 
$
\hfill
\end{proof}
To show Theorem 7.1,
it is enough to consider the derivatives with respect to $\boldsymbol{\theta} $ only,
because
$$
\nabla_{\mathbf{v}} h(\boldsymbol{\theta}, \mathbf{v}) 
=\nabla_{\mathbf{v}} \ell_e(\boldsymbol{\theta}, \mathbf{v})
=\nabla_{\mathbf{v}} h_B(\boldsymbol{\theta}, \mathbf{v}). 
$$
\citet{han22b} showed that 
$\widetilde{L}_B(\boldsymbol{\theta}) \stackrel{p}{\to} \argmax_{\boldsymbol{\theta}} L(\boldsymbol{\theta}; \mathbf{y}) $ 
and substituting $\mathbf{v}  = \widetilde{\mathbf{v}}$ leads to 
$h_B(\boldsymbol{\theta}, \widetilde{\mathbf{v}}) = \log \widetilde{L}_B(\boldsymbol{\theta})$. 
Thus, we have 
$$
\widehat{\boldsymbol{\theta}}_B 
= \argmax_{\boldsymbol{\theta}} h_B(\boldsymbol{\theta}, \widetilde{\mathbf{v}}) 
= \argmax_{\boldsymbol{\theta}} \widetilde{L}_B(\boldsymbol{\theta}) 
\stackrel{p}{\to} \argmax_{\boldsymbol{\theta}} L(\boldsymbol{\theta}; \mathbf{y}) 
= \widehat{\boldsymbol{\theta}}. 
$$
For the second derivatives, by using the theorem in \citet{han22b}, 
we can show that 
$$
I_{11}( \widehat{\boldsymbol{\theta}}_B) \stackrel{p}{\to} 
\left[ \nabla^2_{\boldsymbol{\theta}}  \ell_e(\boldsymbol{\theta}, \widetilde{\mathbf{v}})
- \nabla^2_{\boldsymbol{\theta}} \ell(\boldsymbol{\theta}; \mathbf{y}) \right]
_{\boldsymbol{\theta}  = \widehat{\boldsymbol{\theta}}}. 
$$
This leads to 
$$
\widetilde{I}_B (\widehat{\boldsymbol{\theta}}_B, \widehat{\mathbf{v}}_B) 
\stackrel{p}{\to} I(\widehat{\boldsymbol{\theta}}, \widehat{\mathbf{v}}), 
$$
and $\widehat{I}$ is a consistent estimator of $\mathcal{I}_{\boldsymbol{\theta}}$ 
from the Theorem 4.2. 
\hfill \qed

\end{appendix}

\end{document}